\documentclass[reqno]{amsart}
\usepackage{enumerate, bbm}
\usepackage{amssymb,url,booktabs}
\usepackage{xcolor}
\usepackage{mathrsfs}

\usepackage[
  colorlinks=true,
  linkcolor=blue,
  citecolor=red,
  urlcolor=black,
  filecolor=black
]{hyperref}

\numberwithin{equation}{section}

\newcommand{\ip}[2]{\langle #1, #2 \rangle}
\newcommand{\be}{\begin{eqnarray}}
\newcommand{\ee}{\end{eqnarray}}
\newcommand{\ce}{\begin{eqnarray*}}
\newcommand{\de}{\end{eqnarray*}}
\newtheorem{theorem}{Theorem}[section]
\newtheorem{lemma}[theorem]{Lemma}
\newtheorem{remark}[theorem]{Remark}
\newtheorem{definition}[theorem]{Definition}
\newtheorem{proposition}[theorem]{Proposition}
\newtheorem{examples}[theorem]{Example}
\newtheorem{corollary}[theorem]{Corollary}
\newtheorem{assumption}[theorem]{Assumption}

\def\var{{\mathrm{var}}}

\def\eps{\varepsilon}

\def\e{\mathrm{e}}

\def\p{\partial}

\def\<{{\langle}}
\def\>{{\rangle}}
\def\bz{{\mathbf{z}}}

\def\tr{\mathrm {tr}}

\def\bb2{{\boldsymbol{2}}}
\def\no{\nonumber}
\def\={&\!\!=\!\!&}

\def\bB{{\mathbf B}}
\def\bC{{\mathbf C}}

\def\cB{{\mathcal B}}
\def\cC{{\mathcal C}}

\def\cF{{\mathcal F}}
\def\cG{{\mathcal G}}
\def\cH{{\mathcal H}}

\def\cK{{\mathcal K}}
\def\cL{{\mathcal L}}
\def\cM{{\mathcal M}}

\def\cP{{\mathcal P}}

\def\cR{{\mathcal R}}
\def\cS{{\mathcal S}}

\def\mE{{\mathbb E}}

\def\mK{{\mathbb K}}
\def\mH{{\mathbb H}}

\def\mN{{\mathbb N}}

\def\mP{{\mathbb P}}

\def\mR{{\mathbb R}}

\def\bP{{\mathbf P}}
\def\bQ{{\mathbf Q}}

\def\b1{{\mathbbm 1}}

\def\sA{{\mathscr A}}
\def\sB{{\mathscr B}}

\def\sF{{\mathscr F}}

\def\sK{{\mathscr K}}

\def\E{\mathbb E}

\def\geq{\geqslant}
\def\leq{\leqslant}
\def\ge{\geqslant}
\def\le{\leqslant}

\def\div{\mathord{{\rm div}}}

\def\var{{\mathrm{var}}}

\def\eps{\varepsilon}

\def\e{\mathrm{e}}

\def\p{\partial}

\def\<{{\langle}}
\def\>{{\rangle}}
\def\bz{{\mathbf{z}}}

\def\tr{\mathrm {tr}}

\def\dif{{\mathord{{\rm d}}}}

\def\no{\nonumber}
\def\={&\!\!=\!\!&}
\def\bt{\begin{theorem}}
\def\et{\end{theorem}}
\def\bl{\begin{lemma}}
\def\el{\end{lemma}}
\def\br{\begin{remark}}
\def\er{\end{remark}}
\def\bd{\begin{definition}}
\def\ed{\end{definition}}
\def\bp{\begin{proposition}}
\def\ep{\end{proposition}}
\def\bc{\begin{corollary}}
\def\ec{\end{corollary}}

\def\geq{\geqslant}
\def\leq{\leqslant}
\def\ge{\geqslant}
\def\le{\leqslant}

\def\div{\mathord{{\rm div}}}

 \def\R{\mathbb R}
 \def\R{\mathbb R}    
\def\N{\mathbb N}  
   
\def\<{\langle} \def\>{\rangle}

\newcommand{\norm}[1]{\left\lVert #1 \right\rVert}
\newcommand{\abs}[1]{\left\lvert #1 \right\rvert}
\newcommand{\inner}[2]{\left\langle #1, #2 \right\rangle}

\allowdisplaybreaks

\begin{document}

\title[Propagation of chaos for kinetic McKean--Vlasov SDEs]{Quantitative Propagation of Chaos and Fluctuations
for Kinetic McKean--Vlasov SDEs with Singular Interaction Kernels}

\author{Zimo Hao, Xicheng Zhang and Xianliang Zhao}

\hypersetup{
  pdftitle={Quantitative Propagation of Chaos and Fluctuations for Kinetic McKean--Vlasov SDEs with Singular Interaction Kernels},
  pdfauthor={Zimo Hao, Xicheng Zhang and Xianliang Zhao}
}

\address{Zimo Hao: School of Mathematics and Statistics, Beijing Institute of Technology, Beijing 100081, China,
Email: zmhao@bit.edu.cn}

\address{Xicheng Zhang:
School of Mathematics and Statistics, Beijing Institute of Technology, Beijing 100081, China\\
Faculty of Computational Mathematics and Cybernetics, Shenzhen MSU-BIT University, 518172 Shenzhen, China\\
Email: XichengZhang@gmail.com
 }

\address{Xianliang Zhao:
{Beijing International Center for Mathematical Research, Peking University, Beijing 100871, China\\
Email: xzhaomath@gmail.com}
 }

\begin{abstract}
We prove a quantitative propagation of chaos estimate and a central limit theorem
for the particle system associated with
a class of degenerate kinetic McKean--Vlasov SDEs with external drifts and singular interaction kernels in Kato's class. In particular, the interaction kernel can be in 
the mixed $L^q_tL^{p_v}_vL^{p_x}_x$-space, where $\frac2q+\frac{3d}{p_x}+\frac d{p_v}<1$. 
For the associated $N$-particle system, we obtain a path-space relative entropy bound of order $k/N$ for the first $k$ particles, assuming only entropic chaoticity of the initial data. The key ingredients are kinetic Krylov--Khasminskii estimates and a conditional Hilbert-space subgaussian estimate for empirical interaction fields. For the CLT, we also prove
a Berry--Esseen-type bound
for finite-dimensional projections.

\bigskip
\noindent
{\textbf{Keywords}: Propagation of chaos, kinetic McKean--Vlasov SDEs, relative entropy,
CLT, Berry--Esseen bound.}

\end{abstract}
\maketitle


\section{Introduction}

\subsection{Background and main contributions}
The propagation of chaos problem, originating from the works of Kac and McKean, asks whether a finite number of particles in a large interacting system become asymptotically independent and distributed according to a nonlinear limiting equation; see \cite{Kac56,McKean67,Sznitman91,Meleard96,ChaintronDiez22a,ChaintronDiez22b} and the references therein. In this paper we use the relative entropy method to study this problem for a class of degenerate, hypoelliptic McKean--Vlasov dynamics with singular drifts and singular interaction kernels. More precisely, we consider the kinetic McKean--Vlasov SDE
\begin{align}\label{kMVSDE}
    \begin{cases}
        \dif X_t=V_t\dif t,\\
        \dif V_t=b(t,Z_t)\dif t+(K*\mu)(t,Z_t)\dif t+\sqrt{2}\dif W_t,
    \end{cases}
\end{align}
where $Z_t=(X_t,V_t)\in\mR^{2d}$, $\mu=(\mu_t)_{t\geq 0}$ is the law of $Z$, and $W$ is a standard $d$-dimensional Brownian motion. For a measure flow $\mu=(\mu_t)_{t\ge0}$, we use the convention
\begin{align*}
    (K*\mu)(t,z):=\int_{\mR^{2d}}K(t,z-z')\mu_t(\dif z'),
    \qquad z\in\mR^{2d}.
\end{align*}

The main novelties of this paper are threefold.
\begin{enumerate}[(i)]
    \item \emph{Kato-class singular interaction.} The interaction kernel is allowed to be singular on the full kinetic phase space $z=(x,v)$: it need not be bounded, continuous, a gradient, repulsive, or generated by a potential. In particular, the assumptions include genuinely unbounded and discontinuous mixed-Lebesgue examples, and are matched to the Krylov--Khasminskii estimates available for degenerate kinetic SDEs.
    \item \emph{Path-space entropy rate $k/N$.} For exchangeable initial data we prove a path-space relative entropy estimate with rate $k/N$ for the first $k$ particle paths. In the i.i.d. case this yields the normalized full $N$-particle entropy rate $1/N$, and in general the initial discrepancy is kept explicitly through $\cH(\mu_0^{N,N}|\mu_0^{\otimes N})$. Thus the estimate is an entropy-per-particle bound on trajectories, not merely a fixed-time weak convergence statement.
    \item \emph{Kinetic fluctuation CLT and Berry--Esseen bounds.} In a fluctuation regime with additional anisotropic Besov regularity, the empirical field $\sqrt N(\mu_t^N-\mu_t)$ converges to a Gaussian linearized kinetic process in weighted negative Besov spaces, and finite-dimensional projections satisfy Berry--Esseen-type estimates. The limiting object is identified without imposing a gradient, repulsive, or potential structure on $K$, even on finite time horizons.
\end{enumerate}

Kinetic McKean--Vlasov equations are closely related to Vlasov--Fokker--Planck equations and Langevin dynamics with mean-field interaction; see, for instance, \cite{CattiauxGuillinMonmarcheZhang19,GuillinLiuWuZhang21} for related entropy and hypocoercive approaches. Propagation of chaos and law of large numbers for kinetic systems with bounded or sufficiently regular interactions have also been studied from several viewpoints. Jabin and Wang \cite{JW16} treat essentially the same kinetic structure as the one considered here, without the external drift $b$ and with bounded interaction kernel $K$. Bellingeri and Coppini \cite{BellingeriCoppini26} prove a law of large numbers on finite time horizons, while Chen, Lin, Ren and Wang \cite{ChenLinRenWang24} establish uniform-in-time propagation of chaos for kinetic mean-field Langevin dynamics under convexity-type assumptions.

Compared with uniformly elliptic mean-field diffusions, \eqref{kMVSDE} is degenerate: the noise acts only on the velocity component, while the position component is regularized only through the transport relation $\dif X_t=V_t\dif t$. This anisotropic smoothing is reflected in the Kolmogorov heat kernel and in the kinetic Kato classes used below; see \cite{Ko34,H,CMPZ,RZ25}. A second difficulty is that both the external drift and the interaction kernel may be singular. Singular SDEs with Kato-type drifts have been studied in non-degenerate and degenerate settings; see, among others, \cite{BassChen03,KR,ZZ18,XZ20,RZ25}. For propagation of chaos with singular interactions, entropy methods have proved particularly robust; see \cite{JabinWang18,Lacker21,Cattiaux24,Han22,HaoRocknerZhang24,BreschJabinWang19,BreschJabinWang23,Wang26}. Related kinetic singular moderately interacting particle systems have also been investigated recently in \cite{HaoJabirMenozziRocknerZhang24}.

A closely related but technically different line of work is the BBGKY-hierarchy approach of Bresch, Jabin and Soler \cite{BJS25} for kinetic Vlasov--Fokker--Planck equations with singular repulsive forces. Their method reaches Coulomb/Poisson-type singularities under structural and a priori assumptions adapted to that setting, including a potential form of the interaction and suitable hierarchy bounds. Rather than using a potential/hierarchy structure to access the Coulomb--Poisson regime, the path-space entropy argument is aimed at a different result range: flexible Kato-class kernels on the full kinetic phase space, including genuinely unbounded and discontinuous mixed-Lebesgue examples. In this range it gives, over every finite time interval, explicit entropy rates and second-order fluctuation estimates, while imposing no density, smoothness, moment, or weighted-energy condition on the initial $N$-particle law beyond exchangeability.

\subsection{Main results}
For $\beta\geq 0$, let $\mK^d_\beta$ be the kinetic Kato class introduced in Definition \ref{def:Kato} below.
We impose the following assumption throughout the paper.
\begin{assumption}\label{ass:main}
Let $b=b_0+b_1$, where $|b_1|,|K|\in\mK^d_1$, $|b_1|^2,|K|^2\in\mK^d_0$, and 
for some $\kappa_0>0$,
    \begin{align*}
        |b_0(t,0)|\le\kappa_0,
        \qquad
        |\nabla_z b_0(t,z)|\le\kappa_0,\ \ (t,z)\in\mR_+\times\mR^{2d}.
    \end{align*}
\end{assumption}

Note that for a probability measure flow $\mu=(\mu_s)_{s\geq 0}$, by Fubini's theorem and translation invariance of the Kato norm (see Lemma \ref{lem:Kato_con} below),
\begin{align*}
    \cK^{(1)}_\lambda(|b_1+K*\mu|;\delta)
    &\le \cK^{(1)}_\lambda(|b_1|;\delta)+\cK^{(1)}_\lambda(|K|;\delta),\\
    \cK^{(0)}_\lambda(|b_1+K*\mu|^2;\delta)
    &\le
    2\cK^{(0)}_\lambda(|b_1|^2;\delta)
    +2\cK^{(0)}_\lambda(|K|^2;\delta).
\end{align*}
In particular, the frozen drifts of the form $b+K*\mu$ satisfy the same linear Kato assumptions.

\medskip\noindent\textbf{Mixed Lebesgue sufficient condition.} Assume that, for some $q,p_x,p_v\ge2$,
\begin{align}\label{assumption:K}
    b_1,K\in L^q_t\big(\mR_+;L^{p_v}_vL^{p_x}_x\big),
    \quad
    \tfrac2q+\tfrac{3d}{p_x}+\tfrac d{p_v}<1.
\end{align}
Then $b_1,K\in\mK_1$, $|b_1|^2,|K|^2\in\mK_0$ (see Example \ref{ex:Lp} below). 

We first introduce the following notion of weak solution.
\bd\label{def:SDE}
Let $\mathfrak U:=(\Omega,\sF,\mP, (\sF_t)_{t\geq 0})$ be a stochastic basis, let $Z=(X,V)$ be a continuous $\sF_t$-adapted process with values in $\mR^{2d}$, and let $W$ be a continuous $\sF_t$-adapted process with values in $\mR^d$.
Let $\mu_0\in\cP(\mR^{2d})$.
We call $(Z,W,\mathfrak U)=(X,V,W,\mathfrak U)$ a weak solution of kinetic SDE \eqref{kMVSDE} with initial distribution $\mu_0$ if
\begin{enumerate}[(i)]
\item $\mu_0=\mP\circ Z^{-1}_0$ and $W$ is a standard $d$-dimensional Brownian motion on $\mathfrak U$.

\item For each $t>0$,
$\int^t_0\bigl(|b(s,Z_s)|+|(K*\mu)(s,Z_s)|\bigr)\dif s<\infty$, a.s., and
$$
X_t=X_0+\int^t_0V_s\dif s,\ \ V_t=V_0+\int^t_0[b+K*\mu](s, Z_s)\dif s+\sqrt 2W_t,\ \ \text{a.s.},
$$
where $\mu_t=\mP\circ Z_t^{-1}$ is the time marginal distribution of the solution $Z$.
\end{enumerate}
\ed

Our first result is the corresponding weak well-posedness of the nonlinear equation \eqref{kMVSDE}.

\begin{theorem}\label{thm:well-posed}
    Under Assumption \ref{ass:main}, for every initial law $\mu_0\in\cP(\mR^{2d})$, the kinetic McKean--Vlasov SDE \eqref{kMVSDE} admits a unique weak solution in the sense of Definition \ref{def:SDE}.
\end{theorem}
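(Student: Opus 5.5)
We reduce the statement to a fixed-point problem for measure flows, using entropy/Girsanov estimates for the linear (frozen) kinetic SDE. For a probability flow $\mu=(\mu_t)_{t\ge0}$, consider the linear kinetic SDE
\begin{align*}
\dif X_t=V_t\dif t,\qquad \dif V_t=b(t,Z_t)\dif t+(K*\mu)(t,Z_t)\dif t+\sqrt2\dif W_t,\qquad Z_0\sim\mu_0 .
\end{align*}
By the remark after Assumption \ref{ass:main}, the singular part $b_1+K*\mu$ satisfies the same Kato bounds uniformly in $\mu$.

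\textbf{Step 1: the frozen equation.} First solve $\dif V=b_0\dif t+\sqrt2\dif W$, $\dif X=V\dif t$ strongly; $b_0$ is Lipschitz, so this is classical. Then add $b_1+K*\mu$ by Girsanov. The kinetic Krylov--Khasminskii estimate for $|b_1+K*\mu|^2\in\mK^d_0$ (Khasminskii's lemma with the Kato norm small for small $\delta$) gives
\begin{align*}
\mE\exp\Bigl(\lambda\int_0^T|b_1+K*\mu|^2(s,Z_s)\dif s\Bigr)<\infty\quad\text{for every }\lambda>0 .
\end{align*}
So Novikov holds, which gives weak existence of the frozen SDE. Uniqueness in law also follows from Girsanov: any weak solution satisfies the same exponential integrability, because the Krylov estimate holds a priori for solutions with a Kato drift ($|b_1|\in\mK^d_1$ is used here). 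Its law is therefore the tilted law of the $b_0$-diffusion. Denote the resulting path law by $\Phi(\mu)$ and its marginals by $\Phi(\mu)_t$.

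\textbf{Step 2: contraction.} For two flows $\mu,\nu$, compare $\Phi(\mu)$ and $\Phi(\nu)$ on path space over $[0,t]$ using relative entropy. By Girsanov,
\begin{align*}
\cH\big(\Phi(\mu)|_{[0,t]}\,\big|\,\Phi(\nu)|_{[0,t]}\big)=\tfrac14\,\mE\int_0^t\big|K*(\mu_s-\nu_s)\big|^2(s,Z^\mu_s)\dif s .
\end{align*}
The key bound is
\begin{align*}
\big|K*(\mu_s-\nu_s)(z)\big|\le \int |K(s,z-z')|\,|\mu_s-\nu_s|(\dif z')\le \|\mu_s-\nu_s\|_{TV}\,\sup_{z'}|K(s,z-z')| .
\end{align*}
This is too crude for unbounded $K$. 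Instead I will take the sup over a coupling in the weighted form $\|\mu_s-\nu_s\|_{TV}$, applied after Jensen:
\begin{align*}
\mE\,|K*(\mu_s-\nu_s)|^2(Z_s)\le 2\|\mu_s-\nu_s\|_{TV}\int\mE|K(s,Z_s-z')|^2(\mu_s+\nu_s)(\dif z').
\end{align*}
Then the Krylov estimate, which is translation invariant in $z'$, gives
\begin{align*}
\mE\int_r^t|K|^2(s,Z_s-z')\dif s\le C\,\cK^{(0)}(|K|^2;t-r)\to0 .
\end{align*}
Combining with Pinsker,
\begin{align*}
\sup_{s\le t}\|\Phi(\mu)_s-\Phi(\nu)_s\|_{TV}^2\le \varepsilon(\delta)\sup_{s\le t}\|\mu_s-\nu_s\|_{TV}
\end{align*}
on intervals of length $\delta$. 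This is only a square-root-type contraction, so it needs to be fixed. I would instead use the Lipschitz form of the Girsanov argument: bound $\|\Phi(\mu)_t-\Phi(\nu)_t\|_{TV}$ by $\mE|1-\mathcal E_t|$, with $\mathcal E$ the density of the change of measure. Via Burkholder, this gives
\begin{align*}
\mE|1-\mathcal E_t|\lesssim \Bigl(\mE\int_0^t|K*(\mu_s-\nu_s)|^2\dif s\Bigr)^{1/2}.
\end{align*}
To obtain linear dependence, write $K*(\mu_s-\nu_s)(z)=\int K(s,z-z')(\mu_s-\nu_s)(\dif z')$ and use a weighted TV norm on path laws: $|\mu_s-\nu_s|\le \|\mu-\nu\|\,(\mu_s+\nu_s)$ in density-ratio sense is not available. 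So I would use duality instead. A Krylov-type estimate of the form $\mE\int|f|(s,Z_s)\dif s\le \varepsilon\|f\|$ for $|f|\in\mK^d_1$, applied with $f=|K|*|\mu_s-\nu_s|$, yields
\begin{align*}
\mE\int_0^t|K*(\mu_s-\nu_s)|(Z_s)\dif s\le \varepsilon(t)\sup_s\|\mu_s-\nu_s\|_{TV}.
\end{align*}
Combined with a first-order expansion of $\mathcal E$ whose quadratic remainder is controlled via the exponential moments, this gives a genuine contraction in $\sup_{s\le\delta}\|\cdot\|_{TV}$ for small $\delta$. This step is the main obstacle: getting a linear rather than square-root bound in TV with only Kato integrability of $K$.

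\textbf{Step 3: fixed point and iteration.} Banach's fixed point theorem on $[0,\delta]$ then gives a unique flow $\mu=\Phi(\mu)$. Since $\delta$ depends only on the Kato moduli and not on $\mu_0$, restarting at $\delta,2\delta,\dots$ (using the Markov property and uniqueness of the frozen SDE) extends the solution to $\mR_+$.

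Finally, uniqueness for \eqref{kMVSDE}: any weak solution has marginal flow $\mu$ with law equal to $\Phi(\mu)$ by the uniqueness in Step 1, so $\mu$ is a fixed point and coincides with the constructed one. Integrability condition (ii) follows from the Krylov estimate.
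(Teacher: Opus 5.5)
There is a genuine gap, and it is the step you yourself call the ``main obstacle'': Step~2 never produces a contraction. Without one, neither the fixed point nor uniqueness follows. For two fixed points, $\|\mu-\nu\|^2\le\varepsilon\|\mu-\nu\|$ only gives $\|\mu-\nu\|\le\varepsilon$, not $0$, and Picard iterates need not be Cauchy.

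The loss is self-inflicted. After Cauchy--Schwarz you replaced the total variation measure $|\mu_s-\nu_s|$ by $\mu_s+\nu_s$, which throws away a factor $\|\mu_s-\nu_s\|_{\var}$. Keep it:
\begin{align*}
|K*(\mu_s-\nu_s)|^2(s,z)\le \|\mu_s-\nu_s\|_{\var}\,\big(|K|^2*|\mu_s-\nu_s|\big)(s,z).
\end{align*}
The Kato norm is translation invariant, so Fubini gives Lemma~\ref{lem:Kato_con} with $p=2$ for the signed flow $\mu-\nu$:
\begin{align*}
\cK^{(0)}_\lambda\big(|K*(\mu-\nu)|^2;\delta\big)\le \cK^{(0)}_\lambda(|K|^2;\delta)\sup_s\|\mu_s-\nu_s\|_{\var}^2.
\end{align*}
The Krylov estimate of Lemma~\ref{Le27}, with constants uniform over frozen flows, then yields
\begin{align*}
\mE\int_0^t|K*(\mu_s-\nu_s)|^2(s,Z^\mu_s)\,\dif s\le C\,\cK^{(0)}_{\lambda_0}(|K|^2;t)\sup_{s\le t}\|\mu_s-\nu_s\|_{\var}^2 .
\end{align*}
Combined with Lemma~\ref{Lem34} and Pinsker, this gives a genuine contraction:
\begin{align*}
\|\Phi(\mu)_{[0,t]}-\Phi(\nu)_{[0,t]}\|^2_{\var}\le C\,\cK^{(0)}_{\lambda_0}(|K|^2;t)\sup_{s\le t}\|\mu_s-\nu_s\|^2_{\var}.
\end{align*}
This is exactly the estimate behind the paper's Picard iteration and its uniqueness argument. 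The paper runs the iteration in total variation on the complete space $\cP(\cC^d_{T_0})$. The rest of your outline matches the paper: frozen well-posedness via the $b_0$-diffusion and Girsanov, a $\delta$ independent of $\mu_0$, and restarting on consecutive intervals.

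The repair you propose does not close the gap as written. The Burkholder bound on $\mE|1-\mathcal E_t|$ is controlled by $\mE\int_0^t|K*(\mu_s-\nu_s)|^2\,\dif s$, so it inherits the same loss. The $L^1$ Krylov bound
\begin{align*}
\mE\int_0^t|K*(\mu_s-\nu_s)|(s,Z_s)\,\dif s\lesssim \varepsilon\sup_s\|\mu_s-\nu_s\|_{\var}
\end{align*}
also does not control the first-order term of the expansion of $\mathcal E_t$. That term is a stochastic integral $\mE\bigl[f(Z_t)\int_0^t\theta_s\cdot\dif W_s\bigr]$ with $\theta_s=\tfrac{1}{\sqrt2}K*(\mu_s-\nu_s)(s,Z_s)$. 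Turning it into a drift-type integral $\mE\int_0^t\theta_s\cdot\nabla_v u_s(Z_s)\,\dif s$ needs two ingredients you neither state nor prove:
\begin{itemize}
\item a gradient bound $\|\nabla_v u_s\|_\infty\lesssim (t-s)^{-1/2}\|f\|_\infty$ for the backward Kolmogorov equation of the frozen kinetic SDE with Kato drift;
\item a time-weighted Krylov estimate, which is where $|K|\in\mK^d_1$ would genuinely enter.
\end{itemize}
So that route is at best a programme, and the one-line correction above makes it unnecessary.
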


\br\rm
When $K=0$, the well-posedness of the linear kinetic SDE \eqref{kMVSDE} was established in \cite{RZ25}.
For $K$ in Kato's class, the proof of Theorem \ref{thm:well-posed} relies on a stability estimate in relative entropy. If two frozen kinetic equations have drifts differing by a Kato-class perturbation, Girsanov's theorem and the Krylov estimate yield a path-space entropy bound. \er

Let $N\in\mN$ and $(W^i)_{i=1}^N$ be independent standard $d$-dimensional Brownian motions.
The $N$-particle system associated with \eqref{kMVSDE} is
\begin{align}\label{IPS}
    \begin{cases}
        \dif X_t^{N,i}=V_t^{N,i}\dif t, \qquad i=1,\ldots,N,\\
        \dif V_t^{N,i}=b(t,Z_t^{N,i})\dif t+(K*\mu^N_t)(t,Z_t^{N,i})\dif t+
        \sqrt 2\dif W_t^i,
    \end{cases}
\end{align}
where $Z_t^{N,i}:=(X_t^{N,i},V_t^{N,i})$ and $\mu^N_t$ is the empirical measure associated with $(Z_t^{N,i})_{i=1,\cdots,N}$, i.e.,
$$
\mu^N_t:=\frac1N\sum_{j=1}^N\delta_{Z_t^{N,j}}. 
$$
The above IPS can be regarded as a linear kinetic equation on $(\mR^{2d})^N$.
In this case, for $\bz=(z_1,\cdots, z_N)\in(\mR^{2d})^N$, the drift is given by
$$
B_i^N(t,z_1,\ldots,z_N)
    :=b(t,z_i)+\frac1N\sum_{j=1}^N K(t,z_i-z_j)\in\mR^d,
    \quad i=1,\ldots,N.
$$
Throughout the sequel we assume
$$
K(t,0)=0.
$$
The product drift consists of the Lipschitz--linear part inherited from $b_0$
and the singular part generated by $b_1$ and the empirical interaction. By
Assumption~\ref{ass:main}, Lemma~\ref{lem:Kato_con}, and the definition of the
product kinetic Kato norm, this singular part belongs to the corresponding
product Kato class:
$$
|b_1(t,z_i)+N^{-1}\sum_{j=1}^N K(t,z_i-z_j)|\in \mK^{Nd}_1,
\qquad
|b_1(t,z_i)+N^{-1}\sum_{j=1}^N K(t,z_i-z_j)|^2\in \mK^{Nd}_0,
$$
for each $i=1,\ldots,N$. Thus, 
by \cite[Theorem 4.2]{RZ25}, \eqref{IPS} is weakly well-posed for every initial distribution $\mu_0^{N,N}\in\cP((\mR^{2d})^N)$.

We now state the quantitative propagation of chaos result. Let $\bP_{[0,t]}^{N,k}$ be the law of the first $k$ particle paths on $[0,t]$, and let $\bP_{[0,t]}$ be the law of the nonlinear process on the same time interval:
\begin{align}\label{path-laws}
    \bP_{[0,t]}^{N,k}:=\cL(Z_{[0,t]}^{N,1},\ldots,Z_{[0,t]}^{N,k}),
    \qquad
    \bP_{[0,t]}:=\cL(Z_{[0,t]}).
\end{align}
We also write
\begin{align}\label{time-laws}
    \mu_t^{N,k}:=\cL(Z_t^{N,1},\ldots,Z_t^{N,k}),
    \qquad
    \mu_t:=\cL(Z_t).
\end{align}

\begin{theorem}\label{thm:main}
    Under Assumption \ref{ass:main}, assume that $\mu_0^{N,N}$ is exchangeable. Then, for every $t>0$, there exists a constant $C_t>0$, independent of $N$, $k$ and $\mu_0^{N,N}$, such that for all $N\in\mN$ and $k=1,\ldots,N$,
    \begin{align}\label{PoC:main}
        \cH(\bP_{[0,t]}^{N,k}|\bP_{[0,t]}^{\otimes k})
        \le
        \frac{C_t k}{N}
        \left(\cH(\mu_0^{N,N}|\mu_0^{\otimes N})+1\right).
    \end{align}
\end{theorem}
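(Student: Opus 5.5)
We would follow the Jabin--Wang / Lacker route on path space. The first step is to compare the full $N$-particle path law $\bP^{N,N}_{[0,t]}$ with the law $\bQ^N$ of $N$ independent copies of the nonlinear process \eqref{kMVSDE} started from the exchangeable initial law $\mu_0^{N,N}$ instead of $\mu_0^{\otimes N}$. Both are weak solutions of kinetic SDEs on $(\mR^{2d})^N$ with the same degenerate noise, so Girsanov's theorem applies. Well-posedness by \cite[Theorem 4.2]{RZ25}, together with the Kato-class integrability, justifies the martingale property of the density; if needed we localize first and then remove the localization. The chain rule for relative entropy then gives
\begin{align*}
\cH(\bP^{N,N}_{[0,t]}|\bP_{[0,t]}^{\otimes N})
=\cH(\mu_0^{N,N}|\mu_0^{\otimes N})
+\frac14\sum_{i=1}^N\mE\int_0^t\Big|\frac1N\sum_{j=1}^N K(s,Z^{N,i}_s-Z^{N,j}_s)-(K*\mu_s)(s,Z^{N,i}_s)\Big|^2\dif s .
\end{align*}
Because there is no good estimate directly under the particle law, we use the Donsker--Varadhan (Gibbs) inequality to move the expectation to the reference measure $\bP_{[0,t]}^{\otimes N}$. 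For $\eta>0$ small this gives
$$\mE_{\bP^{N,N}}[F]\le \eta^{-1}\big(\cH(\bP^{N,N}|\bP^{\otimes N})+\log\mE_{\bP^{\otimes N}}e^{\eta F}\big).$$
We choose $\eta$ so that, after multiplying by $\frac14$, the entropy term is absorbed into the left side. It is convenient to do this on short time intervals $[s_m,s_{m+1}]$ of length $\delta$, using the Markov property and the path-space chain rule on each. The result is a bound of the form
$$
\cH(\bP^{N,N}_{[0,t]}|\bP^{\otimes N}_{[0,t]})\le C\big(\cH(\mu_0^{N,N}|\mu_0^{\otimes N})+\log\mE e^{\eta F}\big).
$$

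The key step, and the main obstacle, is to show that $\sup_N\mE_{\bP^{\otimes N}}\exp(\eta F)<\infty$ uniformly in $N$. Here $F=\sum_i\int_0^t|\frac1N\sum_j \phi_s(Z^i_s,Z^j_s)|^2\dif s$ with $\phi_s(z,z')=K(s,z-z')-K*\mu_s(z)$, and the $Z^i$ are i.i.d. copies of the nonlinear process. Note that $\phi$ is centered in $z'$ under $\mu_s$, and the diagonal term vanishes up to $K*\mu$ because $K(t,0)=0$. In the bounded case this reduces to the Jabin--Wang combinatorial estimate. In our setting $|K|^2\in\mK_0$, so we argue as follows. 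Condition on $Z^i$ and view $j\mapsto\phi_s(Z^i_s,Z^j_s)$ as a sum of independent centered random elements in the Hilbert space $L^2([0,\delta];\mR^d)$ in the time variable. A conditional Hilbert-space subgaussian (Pinelis-type) estimate then bounds the exponential moment of $\frac1N\int|\sum_j\phi|^2$ by a quantity controlled by the conditional second moments $\int_0^\delta\mE|K(s,z-Z^j_s)|^2\dif s$, evaluated along the path of $Z^i$. These are in turn bounded by the kinetic Krylov--Khasminskii estimates: the heat-kernel bounds for $\mu_s$ combined with $|K|^2\in\mK^d_0$ make $\sup_z\int_0^\delta\mE|K(s,z-Z_s)|^2\dif s$ small for small $\delta$. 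Khasminskii's lemma then gives exponential integrability along $Z^i$. Finally, Hölder's inequality over $i$, combined with exchangeability, decouples the sum over $i$. The delicate points in this step are choosing $\eta$ and $\delta$ independent of $N$, and handling the $j=i$ self-interaction term, which is $O(1/N)$ per particle.

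With the full-system bound $\cH(\bP^{N,N}_{[0,t]}|\bP^{\otimes N}_{[0,t]})\le C_t(\cH(\mu_0^{N,N}|\mu_0^{\otimes N})+1)$ in hand, the $k/N$ rate for marginals follows from exchangeability and the subadditivity of entropy with respect to product reference measures. Specifically, for exchangeable laws $\cH(\bP^{N,k}|\bP^{\otimes k})\le \frac{2k}{N}\cH(\bP^{N,N}|\bP^{\otimes N})$; this holds for $k\le N/2$ by splitting into $\lfloor N/k\rfloor$ blocks, and it is trivial for larger $k$ because marginal entropy is monotone. This yields \eqref{PoC:main}.

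If this only produced order $1$ instead of an $N$-independent constant with the $k/N$ factor, I would refine the argument following Lacker's BBGKY-type entropy hierarchy. That means writing the entropy identity for the $k$-marginal, whose drift error involves the conditional expectation $\mE[\frac1N\sum_j K(Z^i-Z^j)\mid Z^{1..k}]$. We would bound the interaction with particles $j>k$ through the $(k+1)$-marginal entropy, using the same Krylov-plus-subgaussian transport inequality, and then iterate the resulting linear differential inequality in $k$. Closing that inequality is then the main obstacle, again resting on the subgaussian estimate above.
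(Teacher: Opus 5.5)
Your proposal is correct and is essentially the paper's proof. The shared steps are: the Girsanov entropy bound (Lemma~\ref{Lem34}); the Gibbs inequality \eqref{Var} with absorption of the entropy term on short intervals; a uniform-in-$N$ exponential bound obtained by conditioning on one particle and applying a Hilbert-space subgaussian estimate in $L^2([s,s+\delta];\mR^d)$; concatenation in time via Corollary~\ref{Cor217}; and finally \eqref{BB4}. Your use of H\"older plus exchangeability of $\bP^{\otimes N}$ to reduce $\sum_i$ to a single particle is equivalent to the paper's use of exchangeability of $\bP^{N,N}$ in \eqref{eq:HN-first-bound}.

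A few corrections. Absorption requires $\eta>1/4$ (the paper takes $\eta=1$), not $\eta$ small, so the smallness must come from $\delta$. Lemma~\ref{lem:abstract-average-holder} needs conditional square-exponential moments of $\|\xi_j\|_{L^2}$ with a large coefficient, not just second moments. These come from Khasminskii's lemma applied to the conditional, uniform-in-starting-point Krylov bound of Lemma~\ref{Le27}, not from bounds on $\mu_s$ alone. Finally, the full-system bound together with \eqref{BB4} already yields the $k/N$ rate, so the hierarchy fallback is unnecessary.
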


Theorem \ref{thm:main} is a path-space relative entropy estimate. In particular, if the initial law is chaotic in the entropic sense, namely
\begin{align*}
    \frac1N\cH(\mu_0^{N,N}|\mu_0^{\otimes N})\longrightarrow0,
\end{align*}
then, for each fixed $k$ and $t>0$,
\begin{align*}
    \cH(\bP_{[0,t]}^{N,k}|\bP_{[0,t]}^{\otimes k})\longrightarrow0,
\end{align*}
and hence $\bP_{[0,t]}^{N,k}$ converges to $\bP_{[0,t]}^{\otimes k}$ in total variation by Pinsker's inequality. Taking $k=N$ in \eqref{PoC:main} also gives the normalized full path-space entropy estimate
\begin{align*}
    \frac1N\cH(\bP_{[0,t]}^{N,N}|\bP_{[0,t]}^{\otimes N})
    \le
    C_t\left(
        \frac1N\cH(\mu_0^{N,N}|\mu_0^{\otimes N})
        +\frac1N
    \right).
\end{align*}
In particular, if $\mu_0^{N,N}=\mu_0^{\otimes N}$, the normalized full $N$-particle entropy is of order $1/N$, and the $k$-marginal estimate gives the order $k/N$ in relative entropy. The $1/N$ scale for the normalized full entropy is the natural entropy-per-particle order for mean-field systems.

Let us briefly explain the proof of Theorem \ref{thm:main}. By Girsanov's theorem, the path-space entropy is controlled by the squared fluctuation of the empirical interaction field, and the variational formula reduces the problem to a uniform exponential estimate of the form
\begin{align*}
    \sup_{N\ge1}
    \mE\exp\left(
        \frac1N\int_0^t
        \left|
            \sum_{i=1}^N
            \left(K(s,Z_s^1-Z_s^i)-(K*\mu_s)(s,Z_s^1)\right)
        \right|^2
        \dif s
    \right)<\infty
\end{align*}
for small times, where $(Z^i)_{i=1}^N$ are independent copies of $Z$. Since $K$ is singular, standard bounded-difference or Lipschitz concentration is unavailable. We instead regard the fluctuation as an empirical mean of conditionally independent $L^2([0,t];\mR^d)$-valued variables. Kato--Krylov and Khasminskii estimates give conditional square-exponential integrability, and a Hilbert-space subgaussian estimate yields the required bound. The short-time estimate is then extended by the Markov property and entropy chain rule. The concentration argument used in the present paper was motivated by the
martingale-difference estimate developed by Wang and Zhao
\cite[Lemmas~3.3--3.4 and Remark~3.2]{WangZhao26} (see also \cite{JabinWang18, WangZhaoZhu23}), and is extended here to
second-order kinetic systems with singular Kato-class interactions. Related applications of sub-Gaussian-type concentration estimates to singular
interacting particle systems are being investigated in the ongoing-work of
Galeati, Mayorcas and Weinberger
\cite{GaleatiMayorcasWeinberger26}.

\begin{remark}\label{rem:examples-main}\rm
    Theorem \ref{thm:main} gives concrete consequences for singular $L^p$ kernels.
    \begin{enumerate}[(i)]
        \item If $K=K(v)$, $b_1=0$ and $b_0(v)=-\nabla U(v)$ for a smooth potential $U$, then the velocity component of \eqref{kMVSDE} solves
        \begin{align*}
            \dif V_t=-\nabla U(V_t)\dif t+(K*\cL(V_t))(V_t)\dif t+\sqrt{2}\dif W_t.
        \end{align*}
        In this case, the condition \eqref{assumption:K} allows any $K\in L^p(\mR^d)$ with $p>d$, and Theorem \ref{thm:main} yields
        \begin{align*}
            \cH\big(\cL(V_t^{N,1},\ldots,V_t^{N,k})\,\big|\,\cL(V_t)^{\otimes k}\big)
            \le
            \frac{C_t k}{N}
            \left(
                \cH\big(\cL(V_0^{N,1},\ldots,V_0^{N,N})\,\big|\,\cL(V_0)^{\otimes N}\big)+1
            \right).
        \end{align*}
       When $b=0$, \cite{HaoRocknerZhang24} obtained the propagation of chaos for any $K\in L^p(\mR^d)$ with $p>d$, but without any rate. By contrast, we establish a path-space relative entropy estimate with the explicit rate $k/N$.
        \item If $K=H(x)/|x|^\gamma$ with $\gamma\in[0,1/3)$ and $H\in L^p(\mR^d)$ with $p\in(3d/(1-3\gamma),\infty]$, 
        then it is easy to see that (see \cite[Remark 2.4]{RZ25})
        $$
        |K|\in \mK^d_1,\ \ |K|^2\in \mK^d_0.
        $$
        This extends the bounded-kernel relative entropy estimate of \cite{JW16} to
	a class of unbounded $L^p$ interaction kernels in the kinetic setting. Moreover, unlike \cite{JW16}, our result requires only that the initial $N$-particle law $\mu_0^{N,N}$ is exchangeable, with no additional regularity assumption.
    \end{enumerate}
\end{remark}

Beyond the law of large numbers estimate in Theorem~\ref{thm:main}, we also obtain a second-order fluctuation result. In Section~\ref{sec:fluctuation-consequence} we work under the additional fluctuation-regime assumptions that the external drift is absent and the interaction kernel is time-independent, namely $b\equiv0$ and $K(t,z)=K(z)$. If, moreover, $K$ belongs to a kinetic Kato class $\mK_\beta$, $\beta\in(1,2)$, with the compatible anisotropic Besov regularity required in Assumption~\ref{ass:clt}, and if the initial law has enough positive anisotropic Besov regularity, then for i.i.d. initial data the empirical fluctuation field
\[
    \eta_t^N:=\sqrt N(\mu_t^N-\mu_t)
\]
converges to a Gaussian linearized fluctuation process $\eta$; see Theorem \ref{thm:clt} below. More precisely, for every finite horizon $T>0$, every admissible $\alpha<-2d$, and every $\ell<0$, the convergence holds in the weighted negative kinetic Besov space
\[
    C([0,T];\bB^{\alpha-3,2}_{2;a,\ell-1}(\mR^{2d})).
\]
The limit is the unique solution of the linearized martingale problem around the nonlinear law flow $(\mu_t)_{t\in[0,T]}$; in formal terms it solves
\[
    \partial_t\eta_t
    =\Delta_v\eta_t-v\cdot\nabla_x\eta_t
    -\div_v\big((K*\mu_t)\eta_t+(K*\eta_t)\mu_t\big)
    -\sqrt{2}\,\div_v(\sqrt{\mu_t}\,\xi),
\]
where $\xi$ is a $\mR^d$-valued white noise on $\mR_+\times\mR^{2d}$ with independent components. 
Thus the limiting object is a generalized Ornstein--Uhlenbeck process: its initial condition is the usual empirical-measure Gaussian field with covariance
\[
    \mE\big[\langle\eta_0,\varphi\rangle\langle\eta_0,\psi\rangle\big]
    =\langle\mu_0,\varphi\psi\rangle
     -\langle\mu_0,\varphi\rangle\langle\mu_0,\psi\rangle,
\]
and the driving martingale field is independent of $\eta_0$ and has covariance
\[
    \mE[M_t^\varphi M_s^\psi]
    =2\int_0^{t\wedge s}\langle\mu_r,\nabla_v\varphi\cdot\nabla_v\psi\rangle\,\dif r.
\]

Classical fluctuation CLTs for McKean--Vlasov systems are typically proved under bounded or sufficiently smooth interaction coefficients; see, for instance, \cite{Sznitman91,Meleard96}. 
The closest technical point of comparison for the fluctuation analysis is the martingale strategy of \cite{WangZhaoZhu23}; the new difficulty here is the degenerate kinetic structure, where noise acts only on the velocity variable and compactness must be formulated in anisotropic, weighted, negative Besov spaces on the full phase space. 

\vspace{2mm}

For finite-dimensional projections of $\eta_t^N$, we further prove a CLT rate along with a Berry--Esseen estimate against the corresponding Gaussian limit. More precisely, 
for any $m\in\mN$ and $\varphi_1,\ldots,\varphi_m\in C_c^\infty(\mR^{2d})$, we consider 
\[
Y_t^N:=\bigl(\langle\eta_t^N,\varphi_1\rangle,\ldots,
\langle\eta_t^N,\varphi_m\rangle\bigr),\qquad
Y_t:=\bigl(\langle\eta_t,\varphi_1\rangle,\ldots,
\langle\eta_t,\varphi_m\rangle\bigr).
\]
Theorem~\ref{thm:smooth-weak-rate} below gives the quantitative estimate
\begin{align*}
\sup_{t\in[0,T]}
\bigl|\mE F(Y_t^N)-\mE F(Y_t)\bigr|
\lesssim_C N^{-1/2}
\left(\|\nabla F\|_{L^\infty(\mR^m)}+
\|\nabla^2F\|_{L^\infty(\mR^m)}\right).
\end{align*}
By smoothing indicators, Theorem~\ref{thm:berry-esseen} further yields, for each fixed $t\in[0,T]$ such that the coordinate variances of $Y_t$ are positive, the following bound, in which inequalities between vectors are understood componentwise (see \eqref{Berry-Esseen-m}):
\begin{align*}
\sup_{x\in\mR^m}
\left|\mathbb P(Y_t^N\le x)-\mathbb P(Y_t\le x)\right|
\le C N^{-1/6},
\end{align*}

Quantitative CLTs for empirical-measure fluctuations have also been established in several complementary settings. For classical Vlasov dynamics with a smooth long-range potential, Duerinckx \cite{Duerinckx21} obtained the optimal scalar rate $N^{-1/2}$ in both the $1$-Wasserstein and Kolmogorov distances, with constants that grow in time. Bernou and Duerinckx \cite{BernouDuerinckx26} subsequently proved uniform-in-time quantitative CLTs for weakly interacting Brownian and kinetic Langevin particles with smooth coefficients. Their general scalar estimate is of order $N^{-1/3}$ in the second-order Zolotarev distance and improves to $N^{-1/2}$ under a non-degeneracy condition. For first-order diffusions on the torus, assuming bounded interaction kernels with summable Fourier modes and sufficiently large diffusion, Xie \cite{Xie25} established a uniform-in-time scalar Kolmogorov bound of order $N^{-1/7}$ after normalization by the finite-$N$ variance. 
Our contribution is therefore twofold. First, the path-space entropy estimate of Theorem~\ref{thm:main} is strong enough to control the fluctuation field at the scale $\sqrt N$ even for unbounded and discontinuous Kato-class interactions. Second, the limiting Gaussian process is identified for genuinely kinetic singular interactions without assuming a gradient, repulsive, or potential structure of $K$. Together with the Berry--Esseen estimate above, this gives a quantitative Gaussian fluctuation result with an explicit linearized kinetic limit for flexible Kato-class interactions on the full kinetic phase space.

\subsection{Structure and roadmap}
Section~\ref{sec:pre} collects the analytic and probabilistic preliminaries: anisotropic Besov spaces, kinetic Kato classes, Krylov--Khasminskii estimates, and the relative entropy formula used throughout the Girsanov arguments.

Section~\ref{sec:subgaussian} proves the Hilbert-space subgaussian estimate that controls empirical interaction fields without boundedness or Lipschitz assumptions. This is the key concentration input for both the entropy estimate and the fluctuation analysis.

Section~\ref{sec:well} proves weak well-posedness by a freezing-and-Picard argument. It then establishes Theorem~\ref{thm:main} by combining Girsanov's theorem, the subgaussian estimate, Khasminskii's bound, and the entropy chain rule.

Section~\ref{sec:fluctuation-consequence} proves Theorem~\ref{thm:clt}. Uniform $L^2$ bounds yield tightness in weighted negative Besov spaces; subsequential limits are identified through a stable martingale central limit theorem and a paraproduct-based uniqueness argument; the final subsection derives Berry--Esseen-type bounds for finite-dimensional projections.

\subsection*{Notation and conventions}
\begin{itemize}
\item
Throughout the paper, $d\ge1$ is the spatial dimension. We write $z=(x,v)\in\mR^d\times\mR^d=\mR^{2d}$ for a point in the phase space, where $x$ is the position and $v$ is the velocity. 

\item $\cP(E)$ is the space of probability measures on a measurable space $E$, and $\cM(E)$ the space of finite signed measures. 
\item $\|\mu\|_{\var}$ denotes the total variation norm. 
\item For a process $Z$, $\cL(Z)$ is its law, and $\cL(Z_{[0,t]})$ is the path law on the time interval $[0,t]$. 
\item The relative entropy between probability measures $\mu$ and $\nu$ is denoted by $\cH(\mu|\nu)$. For $k,N\in\mN$ with $k\le N$, $\mu_t^{N,k}$ is the $k$-particle time-marginal law.
\item Constants denoted by $C$, $c$, $C_T$, etc.~may change from line to line. 
We also use $A\lesssim B$ or $A\lesssim_C B$ to denote $A\leq CB$ for some unimportant constant, where
the dependence of the constant $C$ on parameters or functions can be traced from the context.

\end{itemize}

\section{Preliminaries}\label{sec:pre}

In this section we recall the definitions of anisotropic Besov spaces and Kato's classes associated with the 
Brownian kinetic operator,
as well as relative entropy and its basic properties. 
Moreover, we prove a crucial Khasminskii estimate for linearized kinetic SDEs by Girsanov's theorem.

\subsection{Anisotropic Besov spaces and Kato's class}\label{subsec:anisotropic-besov}
Let $p_t(x,v)$ be the joint density of the process $\left(\sqrt 2\int_0^t W_s\dif s,\sqrt 2W_t\right)$. A direct computation 
gives (see \cite{Ko34})
\begin{align}\label{density}
    p_t(z)=p_t(x,v)
    =\left(\frac{\sqrt3}{2\pi t^2}\right)^d
      \exp\left(-\frac{3|2x-tv|^2+t^2|v|^2}{4t^3}\right).
\end{align}
The semigroup associated with $p_t(z)$ is defined by
$$
P_tf(z):=\mE f\left(x+tv+\sqrt 2\int_0^t W_s\dif s, v+\sqrt 2W_t\right)
=\int_{\mR^{2d}}f(x+tv+y, v+w)p_t(y,w)\dif w\dif y.
$$
By It\^o's formula, it is easy to see that
$$
\p_tP_tf=(\Delta_v+v\cdot\nabla_x)P_t f:=\sK P_t f.
$$
By definition, we also have
$$
p_t(x,v)=t^{-2d}p_1(t^{-3/2}x, t^{-1/2}v).
$$
In particular, the anisotropy scaling is
$$
a=(3,1).
$$
We introduce the anisotropic distance
$$
|z|_a=|x|^{1/3}+|v|.
$$
For any $r>0$ and $z=(x,v)\in\mR^{2d}$, the ball with respect to $|\cdot|_a$ is defined by
$$
B_r^a(z):=\{z':|z-z'|_a\le r\},\ B_r^a:=B_r^a(0).
$$
Let $\chi\in C^\infty_c(\mathbb{R}^{2d})$ be symmetric, nonnegative, equal to $1$ on $B_1^a$ and $0$ outside $B_{4/3}^a$. For $j\ge-1$, define a function on $\mR^{2d}$
\begin{align*}
\phi_j^a(\xi)=
\begin{cases}
\chi(\xi), & j=-1,\\
\chi(2^{-a(j+1)}\xi)-\chi(2^{-aj}\xi), & j\ge0,
\end{cases}
\end{align*}
where for $\xi=(\xi_1,\xi_2)\in\mR^d\times\mR^d=\mR^{2d}$,
$$
2^{-aj}\xi:=(2^{-3j}\xi_1,2^{-j}\xi_2).
$$
In particular, 
$$
\phi_j^a(\xi)=\phi_0^a(2^{-aj}\xi),\ \ \sum_{j\ge-1}\phi_j^a(\xi)=1.
$$

Let $\cS$ be the space of all Schwartz functions on $\mR^{2d}$ and $\cS'$ be its topological dual, i.e. the space of  tempered distributions. We use $\mathcal{F}$ and $\mathcal{F}^{-1}$ to denote the Fourier transform and its inverse on $\cS'$. Then we define block operators for $f\in\mathcal{S}'$ by
\begin{align}\label{RR1}
\mathcal{R}_j^a f:=\mathcal{F}^{-1}(\phi_j^a\mathcal{F}f)=(\cF^{-1}\phi_j^a)\ast f.
\end{align}

\begin{definition}[Weighted Besov spaces \cite{HZZZ22}]
For $\alpha\in\mathbb{R}$ and $p,q\in[1,\infty]$, define
\begin{align*}
\bB_{p;a}^{\alpha,q}:=\Big\{f\in\mathcal{S}':\ \|f\|_{\bB_{p;a}^{\alpha,q}}:=\Big(\sum_{j\ge-1}2^{\alpha j q}\|\mathcal{R}_j^a f\|^q_{L^p}\Big)^{1/q}<\infty\Big\},
\end{align*}
with the usual modification when $q=\infty$.
Moreover, for $\ell\in\mR$, we also define weighted Besov spaces with the norm
$$
    \|f\|_{\bB^{\alpha,q}_{p;a,\ell}}
    :=
    \|\langle z\rangle^{\ell}f\|_{\bB^{\alpha,q}_{p;a}},
    \qquad
    \langle z\rangle=(1+|z|^2)^{1/2}.
$$
For $p=q=\infty$, we write
$$
\bC^\alpha_a:=\bB^{\alpha,\infty}_{\infty;a}.
$$
\end{definition}
By definition,
it is easy to see that for any $\alpha\in\mR$, $p\in[1,\infty]$, $1\le q_1\le q_2\le \infty$, and $\eps>0$,
\begin{align}\label{easy_emb}
    \bB^{\alpha,q_1}_{p;a}\subset \bB^{\alpha,q_2}_{p;a}\subset \bB^{\alpha-\eps,1}_{p;a}.
\end{align}

\bl[Bernstein's inequality]\label{Le22}
For any $m,n\in\mN_0$ and $1\le p\le q\le \infty$,
there is a constant $C=C(d,m,n,p,q)>0$ such that for all $j\geq -1$,
\begin{align*}
    \|\nabla^m_x\nabla^n_v\cR_j^a f\|_{L^{q}}\lesssim_C 2^{j\beta}\|\cR_j^a f\|_{L^p},
\end{align*}
where $\beta:=3m+n+4d(1/p-1/q)$.
In particular, for any $\alpha\in\mR$,
\begin{align}\label{eq:clt-lp-gradient}
    \|\nabla^m_x\nabla^n_v f\|_{\bB^{\alpha,\infty}_{q;a}}
    \lesssim_C
    \|f\|_{\bB^{\alpha+\beta,\infty}_{p;a}},
\end{align}
and for any finite signed measure $\mu\in\cM(\mR^{2d})$,
\begin{align}\label{measure-emb}
    \|\mu\|_{\bB^{4d(1/p-1),\infty}_{p;a}}\lesssim \|\mu\|_{\var}.
\end{align}
\el

The convolution estimate \eqref{Besov-con} below
and the positive-regularity product estimate \eqref{product} below can be found in \cite[Lemma 2.7]{HRZ}. The last estimate \eqref{product-negative} below follows from the standard paraproduct argument; see, for instance, \cite[Lemma 2.11]{HZZZ22}.
\bl\label{Le23}
Let $\alpha,\beta\in\mR$ and $p_1,p_2,p, q_1,q_2,q\in[1,\infty]$ with 
$$
1+1/p=1/p_1+1/p_2,\quad 1/q=1/q_1+1/q_2.
$$ 
It holds that 
\begin{align}\label{Besov-con}
    \|f*g\|_{\bB^{\alpha+\beta,q}_{p;a}}\lesssim \|f\|_{\bB^{\alpha,q_1}_{p_1;a}}\|g\|_{\bB^{\beta,q_2}_{p_2;a}},
\end{align}
and for any $\alpha>0$,
\begin{align}\label{product}
    \|fg\|_{\bB^{\alpha,\infty}_{q;a}}\lesssim \|f\|_{\bB^{\alpha,\infty}_{q_1;a}}\|g\|_{L^{q_2}}+\|f\|_{L^{q_1}}\|g\|_{\bB^{\alpha,\infty}_{q_2;a}},
\end{align}
and if $\alpha+\beta>0$ and $r\in[1,\infty]$ satisfies $1/r=1/p+1/q$,
\begin{align}\label{product-negative}
\|fg\|_{\bB^{\alpha\wedge\beta,\infty}_{r;a}}\lesssim \|f\|_{\bB^{\beta,\infty}_{p;a}}\|g\|_{\bB^{\alpha,\infty}_{q;a}}.
\end{align}
\el
The following lemma comes from \cite[Lemma 2.14]{HRZ}.
\begin{lemma}[Kinetic semigroup estimates]\label{lem:kinetic-besov-semigroup}
For any $p,q\in[1,\infty]$ and $\alpha\in\mR$, we have
$$
    \|P_t f\|_{\bB^{\alpha,q}_{p;a}}
    \lesssim\|f\|_{\bB^{\alpha,q}_{p;a}},\ \ t>0,
$$
and for $\alpha<\beta \in\mR$,
$$
    \|P_t f\|_{\bB^{\beta,1}_{p;a}}
    \lesssim
 (1\wedge t)^{-\frac{\beta-\alpha}2}\|f\|_{\bB^{\alpha,\infty}_{p;a}},\ \ t>0.
$$
\end{lemma}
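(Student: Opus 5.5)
The proof works on the Fourier side, where $P_t$ becomes a shear composed with a Gaussian multiplier. From the representation $P_tf(z)=\mE f(x+tv+\sqrt2\int_0^tW_s\dif s,\,v+\sqrt2 W_t)$ we have $P_tf=\Gamma_t f * p_t$ up to the transport $\Gamma_tf(x,v):=f(x+tv,v)$. Taking Fourier transforms in $z=(x,v)$, with dual variable $\xi=(\xi_1,\xi_2)$, I expect, up to sign conventions,
\[
\widehat{P_tf}(\xi_1,\xi_2)=\exp\Big(-\int_0^t|\xi_2+s\xi_1|^2\dif s\Big)\,\hat f(\xi_1,\xi_2+t\xi_1).
\]
The Gaussian factor is bounded below and above, in exponent, by $t^3|\xi_1|^2+t|\xi_2|^2$ up to constants, after minimizing the quadratic in $s$. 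So both claims reduce to $L^1$-kernel bounds for the localized operators $\cR_j^aP_t\cR_k^a$. For $\cR_k^a=\sum_{|i-k|\le1}\cR_i^a\cR_k^a$ we then have
\[
\|\cR^a_jP_tf\|_{L^p}\le\sum_k\|\cR^a_jP_t\tilde\cR^a_k\|_{L^1\to L^1\text{-kernel}}\,\|\cR^a_kf\|_{L^p},
\]
by Young's inequality.

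The key intermediate estimate I would aim for is
\[
\big\|\cF^{-1}\big(\phi^a_j(\xi)\,e^{-\int_0^t|\xi_2+s\xi_1|^2\dif s}\,\tilde\phi^a_k(\xi_1,\xi_2+t\xi_1)\big)\big\|_{L^1}\lesssim 2^{-\gamma|j-k|}\big(1+t\,2^{2(j\vee k)}\big)^{-M},
\]
for some $\gamma>0$ and every $M$. To prove it, I rescale $\xi\mapsto 2^{aj}\xi$, which preserves $L^1$ norms of kernels, so that the symbol becomes a smooth function on a fixed annulus. I then bound the $L^1$ norm of the kernel by finitely many $L^2$ norms of derivatives of the symbol, with weights matched to the anisotropic scaling. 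Derivatives of the Gaussian factor only produce polynomial factors in $t^{3}|\xi_1|^2$ and $t|\xi_2|^2$, and these are absorbed by the exponential.

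The main obstacle is that the shear $\xi_2\mapsto\xi_2+t\xi_1$ does not respect the anisotropic dyadic blocks, so off-diagonal pairs $(j,k)$ genuinely interact. I would handle this by case analysis on the frequency support.
\begin{enumerate}[(i)]
\item If $|\xi_1|\sim2^{3j}$, then $\xi_1$ is unchanged by the shear, which forces $k\ge j-1$. Moreover, $|t\xi_1|$ can push the $\xi_2$-frequency up to level $k>j$ only if $t2^{3j}\gtrsim 2^{k}$. In that case the factor $e^{-ct^3 2^{6j}}$ gives decay that is superpolynomial in $t2^{2k}$, and hence also in $2^{k-j}$.
\item If instead $|\xi_1|\ll 2^{3j}$ and $|\xi_2|\sim2^j$, then the mismatch between the levels $j$ and $k$ again requires $|t\xi_1|\gtrsim 2^{j\vee k}$. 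This is once more damped by $e^{-c(t^3|\xi_1|^2+t|\xi_2|^2)}$.
\end{enumerate}
In each case the off-diagonal contributions are summable with geometric decay, uniformly in $t>0$.

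Given the kernel bound, the first estimate follows from Young's inequality for the convolution $2^{\alpha j}\sum_k2^{-\gamma|j-k|}\|\cR_k^af\|_{L^p}$ in $\ell^q$. Here I choose $\gamma>|\alpha|$; this is possible because $M$ is arbitrary and $t2^{2(j\vee k)}$ absorbs any power of $2^{|j-k|}$ once $t2^{2j}\gtrsim1$, so the decay exponent can be taken as large as needed. For the second estimate, I would use $\|\cR^a_kf\|_{L^p}\le 2^{-\alpha k}\|f\|_{\bB^{\alpha,\infty}_{p;a}}$, bound $\sum_k$ as before, and then sum in $j$:
\[
\sum_j2^{(\beta-\alpha)j}\big(1+t2^{2j}\big)^{-M}\lesssim (1\wedge t)^{-\frac{\beta-\alpha}{2}}.
\]
This holds for $M>(\beta-\alpha)/2$ by splitting at $2^{2j}\sim t^{-1}$ when $t\le1$. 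For $t\ge1$ the sum is bounded, because the factor $(1+t2^{2j})^{-M}\le (1+2^{2j})^{-M}$ alone makes it finite.
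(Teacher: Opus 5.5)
The paper does not prove this lemma; it quotes it from [HRZ, Lemma 2.14]. Your Fourier-side argument is therefore a self-contained alternative, and most of it is sound.

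Your case analysis correctly shows three things. First, an off-diagonal pair $(j,k)$ is nonzero only when $t2^{2(j\vee k)}\gtrsim 2^{3|j-k|}$. Second, on that support the exponent is $\gtrsim t2^{2(j\vee k)}$. Third, the derivative counting works for every pair with $j\ge 0$, and for $j=-1$, $k\ge1$. After rescaling by $2^{aj}$ with $T=t2^{2j}$, derivatives of the exponent and of the sheared cutoff cost only powers of $1+T$ on the relevant support. Meanwhile the exponent is $\gtrsim\min(T,T^3)$ on the output annulus, or on the input annulus when $j=-1$, $k\ge1$.

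One point of precision: $P_t$ is not translation invariant, so the Young step should be read as $\cR^a_jP_t\tilde\cR^a_kg=(\cF^{-1}m_{jk})*(\Gamma_tg)$, where $m_{jk}$ is your localized symbol. This is combined with the fact that $\Gamma_t$ is an isometry of $L^p$.

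The step that fails is your key kernel estimate for the pairs $j=-1$, $k\in\{-1,0\}$, whose symbols contain $\xi=0$.
\begin{enumerate}[(i)]
\item \emph{The estimate is false there.} We have $m_{jk}(0)=\phi^a_{-1}(0)\tilde\phi^a_k(0)=1$ for every $t$; this is just $P_t1=1$. Hence $\|\cF^{-1}m_{jk}\|_{L^1}\gtrsim\|m_{jk}\|_{L^\infty}\ge1$, which contradicts the claimed factor $(1+t2^{2(j\vee k)})^{-M}\to0$ as $t\to\infty$.
\item \emph{Your method does not give even a $t$-uniform bound there.} For $j=-1$ nothing is rescaled, and the exponent vanishes at the origin. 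On the region $|\xi_1|\lesssim t^{-3/2}$, $|\xi_2|\lesssim t^{-1/2}$, each $\xi_1$-derivative of $e^{-\int_0^t|\xi_2+s\xi_1|^2\dif s}$ has size of order $t^{3/2}$, and nothing absorbs it. So your sentence that derivatives of the Gaussian factor ``are absorbed by the exponential'' fails exactly here. Bounding $\|\cF^{-1}m_{jk}\|_{L^1}$ by finitely many $L^2$ norms of derivatives at the fixed scale $2^{-a}$ gives a bound that grows polynomially in $t$.
\end{enumerate}
Both claims are asserted for all $t>0$, and the factor $1\wedge t$ shows that large times are intended, so this case must be handled separately.

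The repair is short. $P_t$ is a contraction on every $L^p$, by Jensen's inequality and the invariance of Lebesgue measure under $(x,v)\mapsto(x+tv+y,\,v+w)$. Hence, uniformly in $t$,
$$\|\cR^a_{-1}P_t\tilde\cR^a_kg\|_{L^p}\le\|\cF^{-1}\phi^a_{-1}\|_{L^1}\,\|\cF^{-1}\tilde\phi^a_k\|_{L^1}\,\|g\|_{L^p}.$$
Alternatively, rescale these two blocks at the scale $(t^{-3/2},t^{-1/2})$ instead of $2^{aj}$.

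With the kernel bound replaced by $O(1)$ for these two pairs, both of your summations go through unchanged. The first estimate only needs uniform boundedness of near-diagonal pairs. In the second estimate the $j=-1$ term contributes $O(\|f\|_{\bB^{\alpha,\infty}_{p;a}})$, which is harmless because $(1\wedge t)^{-(\beta-\alpha)/2}\ge1$.
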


Next we introduce the Kato class associated with $p_t(x,v)$.
For $\lambda>0$ and $\beta\geq 0$, we define
$$
\eta^{(\beta)}_\lambda(t,x,v):=t^{-\frac{\beta}{2}-2d} \exp\left(-\lambda\frac{|x-tv|^2+t^2|v|^2}{t^3}\right).
$$
By \eqref{density}, it is easy to see that there are constants $c_0,\lambda_0, c_1,\lambda_1>0$ such that (see \cite{RZ25})
\begin{align}\label{density-bound}
 c_0 \eta^{(0)}_{\lambda_0}(t,x,v)\leq   p_t(x,v)
    \le c_1 \eta^{(0)}_{\lambda_1}(t,x,v).
\end{align}
For a function $f:\mR_+\times\mR^{2d}\to\mR$,
we extend it by zero to negative times and define
\begin{align*}
    \cK^{(\beta)}_{\lambda} (f;\delta)
    :=\sup_{t\ge 0}
    \int_0^\delta \sup_{(x,v)\in\mR^{2d}}\int_{\mR^{2d}}
    \eta^{(\beta)}_\lambda(r,y,w)
    |f(t\pm r,x\pm y,v\pm w)|
    \dif y\dif w\dif r,
\end{align*}
where for simplicity of notation,
$$
|f(t\pm r,x\pm y,v\pm w)|:=\sum_{i,j,k=0,1}| f (t+(-1)^i
r, x +(-1)^j y,v+(-1)^k w)|.
$$

\bd\label{def:Kato}{\rm (Kato's class)}
For any $\beta\ge0$, we define
\begin{align*}
    \mK^d_\beta:=\left\{f\in L^{1}_{\rm loc}(\mR^{2d+1}):\cK^{(\beta)}_{\lambda} (f;1)<\infty,\, \lim_{\delta\downarrow0}\cK^{(\beta)}_{\lambda} (f;\delta)=0, \, \forall \lambda>0\right\}.
\end{align*}
\ed

\begin{examples}\label{ex:Lp}
For exponents $p_x,p_v\in[1,\infty]$, we use the mixed norm
\begin{align*}
    \|f\|_{L^{p_v}_vL^{p_x}_x}
    :=
    \left(
        \int_{\mR^d}
        \left(
            \int_{\mR^d}|f(x,v)|^{p_x}\dif x
        \right)^{p_v/p_x}
        \dif v
    \right)^{1/p_v},
\end{align*}
with the usual modifications when $p_x=\infty$ or $p_v=\infty$. 
By H\"older's inequality, it is easy to see that for some $C=C({d,p_x,p_v,q,\beta,\lambda})>0$
(see \cite[Lemma 2.6]{RZ25}),
\begin{align*}
    \cK^{(\beta)}_\lambda(f;\delta)
    \lesssim_C
    \delta^{\frac{2-\beta-\kappa}{2}}
    \|f\|_{L^q(\mR_+;L^{p_v}_vL^{p_x}_x)},
\end{align*}
where $\kappa:=\frac{2}{q}+\frac{3d}{p_x}+\frac{d}{p_v}<2-\beta$.
Hence,
\begin{align*}
    L^q(\mR_+;L^{p_v}_vL^{p_x}_x)\subset \mK_\beta .
\end{align*}
\end{examples}
\br\label{rmk:Kato01}
We note that $|f|^2\in \mK_0$ does not necessarily imply $f\in \mK_1$. For example, near the origin one may take
$$
f(t)=t^{-\frac12}|\log t|^{-1},\qquad 0<t<\tfrac12.
$$
Then $|f|^2$ is locally integrable near zero, whereas
$t^{-\frac12}f(t)\notin L^1_{\rm loc}([0,\infty))$. Thus $|f|^2\in \mK_0$ but $f\notin \mK_1$.
\er
\bl\label{lem:Kato_con}
Let $p\ge1$, let $\mu=(\mu_s)_{s\ge0}$ be a finite signed measure flow, and assume that $|f|^p\in\mK_\beta$ for some $\beta\ge0$.
Then
\begin{align*}
    \cK^{(\beta)}_{\lambda} (|f*\mu|^p;\delta)\le \cK^{(\beta)}_{\lambda} (|f|^p;\delta)\sup_{t\in\mR_+}\|\mu_t\|_{\var}^p.
\end{align*}
Here, consistently with the definition of the Kato norm, both $f$ and the
measure flow are extended by zero to negative times.
\el
\begin{proof}
    By H\"older's inequality, we have for any $z=(x,v)\in\mR^{2d}$,
\begin{align*}
    |(f*\mu)(r,z)|^p\le  (|f|^p*|\mu|)(r,z)\|\mu_r\|_{\var}^{p-1},
\end{align*}
    which by Fubini's theorem implies that
\begin{align*}
&\sup_{z\in\mR^{2d}}\int_{\mR^{2d}}
    \eta^{(\beta)}_\lambda(r,z')
    |(f*\mu)(t\pm r,z\pm z')|^p\dif z'\\
    &\qquad\le\|\mu_{t\pm r}\|_{\var}^{p-1} \sup_{z\in\mR^{2d}}\int_{\mR^{2d}}
    \eta^{(\beta)}_\lambda(r,z')(|f|^p*|\mu|)(t\pm r,z\pm z')\dif z'\\
    &\qquad\le \|\mu_{t\pm r}\|_{\var}^{p}\sup_{z\in\mR^{2d}}\int_{\mR^{2d}}
    \eta^{(\beta)}_\lambda(r,z')|f(t\pm r,z\pm z')|^p\dif z'.
\end{align*}
Thus by the definition, we obtain the desired estimate.
\end{proof}

Next, we show an important relation between Kato's classes and anisotropic Besov spaces.

\begin{lemma}\label{lem:kato-besov-embedding}
Let $\beta\in[0,2)$,  $\eps\in(0,2-\beta)$ and $f:\mR^{2d}\to\mR$ be a time independent function. 
\begin{enumerate}[(i)]
\item If $f\in\mK_\beta$, then $f\in \bC^{\beta-2}_a$. More precisely,
for every $\lambda>0$, there is a constant $C=C(d,\lambda,\beta)>0$ such that for every $\delta\in(0,1)$,
\begin{equation}\label{eq:kato-besov-convolution}
\|f\|_{\bC^{\beta-2}_a}
    \leq
    C\delta^{(\beta-2)/2}\cK^{(\beta)}_\lambda(f;\delta).
\end{equation}
\item If $0\leq f\in L^1_{\rm loc}(\mR^{2d})\cap\bC^{\beta-2+\eps}_a$, then $f\in\mK_\beta$. More precisely,
for every $\lambda>0$, there is a constant $C=C(d,\lambda,\beta,\eps)>0$ such that for every $\delta\in(0,1)$,
\begin{equation}\label{eq:kato-besov-convolution-2}
    \cK^{(\beta)}_\lambda(f;\delta)\leq {C} \delta^{\eps/2}
    \|f\|_{\bC^{\beta-2+\eps}_a}.
\end{equation}
\end{enumerate}
\end{lemma}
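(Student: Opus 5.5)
The plan is to use the standard heat-kernel characterization of anisotropic Hölder–Besov spaces of negative order. For $\gamma<0$ we have
$$
\|f\|_{\bC^{\gamma}_a}\asymp\sup_{t\in(0,1]}t^{-\gamma/2}\|P_tf\|_{L^\infty}
$$
(or the analogous statement with the Gaussian-type kernel $\eta^{(0)}_\lambda$ in place of $p_t$). If this equivalence is not available in the needed form, I will only use the two directions that are required. For (i), the upper bound
$$
\|f\|_{\bC^{\gamma}_a}\lesssim\sup_{t\le1}t^{-\gamma/2}\|P_tf\|_\infty+\|P_1f\|_\infty
$$
follows from writing $\cR^a_jf=\cR^a_j(f-P_{t_j}f)+\cR^a_jP_{t_j}f$ with $t_j=2^{-2j}$, and using that the Fourier multiplier of $P_t$ on frequency block $j$ is close to $1$. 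Alternatively, for negative $\gamma$ it is simpler to use the integral identity
$$
f=P_1f+\int_0^1\sK P_sf\,\dif s
$$
together with Bernstein's inequality (Lemma \ref{Le22}) and the semigroup bounds of Lemma \ref{lem:kinetic-besov-semigroup}. For (ii), the lower direction $\|P_tf\|_\infty\lesssim t^{\gamma/2}\|f\|_{\bC^\gamma_a}$ for $\gamma<0$ follows from Lemma \ref{lem:kinetic-besov-semigroup} with $\beta>0$ small together with $\bB^{\beta',1}_{\infty;a}\subset L^\infty$ for $\beta'\ge0$. One caveat is that the lemma only gives $(1\wedge t)^{-(\beta-\alpha)/2}$ from $\bB^{\alpha,\infty}$ into $\bB^{\beta,1}$, and we may take $\beta=0$ directly since $\bB^{0,1}_{\infty;a}\subset L^\infty$.

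For (i), fix $\delta\in(0,1)$. By \eqref{density-bound}, $|P_rf(z)|\lesssim\int\eta^{(0)}_{\lambda_1}(r,z')|f(z\pm z')|\dif z'$ up to the harmless shear $x+rv$, which is absorbed after changing variables in the exponent of $\eta$. Since $f$ is time-independent, the Kato norm reduces to $\int_0^\delta\sup_z\int\eta^{(\beta)}_\lambda(r,\cdot)|f|$. For $r\ge s$ we have $\eta^{(\beta)}_\lambda(r)=r^{-\beta/2}\eta^{(0)}_\lambda(r)$. I will use the semigroup/averaging trick: for $t\le\delta$, write $P_tf$ and bound it by the average $\frac1t\int_t^{2t}$ of comparable kernels. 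The cleaner route is
$$
\int_0^\delta r^{-\beta/2}\|P_r|f|\|_\infty\dif r\lesssim\cK^{(\beta)}_\lambda(f;\delta).
$$
Combined with the monotonicity $\|P_{r}|f|\|_\infty\le\|P_{r-s}\|\,\|P_s|f|\|_\infty$ (contraction of $P$ on $L^\infty$), this gives $\|P_t|f|\|_\infty\le\frac{c}{t^{1-\beta/2}}\int_{t/2}^t r^{-\beta/2}\|P_r|f|\|_\infty\dif r\lesssim t^{\beta/2-1}\cK(f;\delta)$ for $t\le\delta$. Then I extend to $t\in[\delta,1]$ via contraction, giving $\|P_tf\|_\infty\le\delta^{\beta/2-1}\cK(f;\delta)$. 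Plugging this into the characterization yields \eqref{eq:kato-besov-convolution}, with the sup over $t$ bounded by $\delta^{(\beta-2)/2}\cK$, since $t^{-(\beta-2)/2}t^{\beta/2-1}=1$ on $t\le\delta$ and $t^{(2-\beta)/2}\delta^{(\beta-2)/2}\le1$ when $t\ge\delta$ is handled by the $P_1$ term. One point to check is the kernel comparison with different $\lambda$ values, which I will handle via \eqref{density-bound} and the scaling $\eta_\lambda^{(0)}(r)\lesssim\eta^{(0)}_{\lambda'}(cr)$.

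For (ii), since $f\ge0$ we have $|f(z\pm z')|=f(z\pm z')$, and by the lower bound in \eqref{density-bound} combined with the scaling $\eta^{(0)}_\lambda(r,\cdot)\lesssim\eta^{(0)}_{\lambda_0}(cr,\cdot)$ for a suitable $c=c(\lambda)$, we get $\int\eta^{(\beta)}_\lambda(r,z')f(z\pm z')\dif z'\lesssim r^{-\beta/2}\|P_{cr}f\|_\infty$, taking care of the reflections $\pm$ and the shear by symmetry of $\chi$. By the semigroup estimate, $\|P_{cr}f\|_\infty\lesssim r^{(\beta-2+\eps)/2}\|f\|_{\bC^{\beta-2+\eps}_a}$. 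Integrating $r^{-1+\eps/2}$ over $[0,\delta]$ gives $\delta^{\eps/2}$, which is \eqref{eq:kato-besov-convolution-2}.

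The main obstacle is the kernel comparison in (i). The Kato kernel $\eta_\lambda$ involves $|x-tv|$, while $p_t$ is centered differently. We need the upper bound of $P_t|f|$ by the $\eta_\lambda$-integral for arbitrary small $\lambda$, whereas \eqref{density-bound} only gives a fixed $\lambda_1$. I expect to handle this by using $\eta_{\lambda}(r)\ge c\,\eta_{\lambda_1}(r')$ after rescaling time $r'=\theta r$ with $\theta$ chosen so the exponents match, accepting a constant depending on $\lambda$.
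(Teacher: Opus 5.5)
Your part (ii) is the paper's argument: the comparison $\eta^{(\beta)}_\lambda(r,\pm y,\pm w)\lesssim r^{-\beta/2}p_{c_\lambda r}(y,w)$, then $\|P_tf\|_\infty\lesssim t^{(\beta-2+\eps)/2}\|f\|_{\bC^{\beta-2+\eps}_a}$ from Lemma~\ref{lem:kinetic-besov-semigroup} with target $\bB^{0,1}_{\infty;a}\subset L^\infty$. The reflections are handled by the sign-invariance of the bound $\exp(-c(|y|^2+r^2|w|^2)/r^3)$, not by any symmetry of $\chi$.

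For part (i) you take a different route whose core is sound. Since $P_r=P_{r-s}P_s$ with $P_{r-s}$ an $L^\infty$-contraction, $r\mapsto\|P_r|f|\|_\infty$ is non-increasing. This converts the time-integrated Kato quantity into $\|P_t|f|\|_\infty\lesssim t^{\beta/2-1}\cK^{(\beta)}_\lambda(f;\delta)$ for $t\le\delta$, and by contraction into $\delta^{\beta/2-1}\cK^{(\beta)}_\lambda(f;\delta)$ for $t\in[\delta,1]$.

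However, the step you outsource to a ``standard characterization'' is justified incorrectly. $P_t$ is not a Fourier multiplier: it is $g\mapsto(p_t*g)(x+tv,v)$. Also, $\hat p_{2^{-2j}}$ is not close to $1$ on the $j$-th block, since there it equals $\hat p_1$ on a fixed annulus. The identity $f=P_1f-\int_0^1\sK P_sf\,\dif s$ does not help either, because the transport part $v\cdot\nabla_xP_sf$ has an unbounded coefficient and is not controlled by Bernstein's inequality.

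What works is a division argument:
\begin{itemize}
\item $\hat p_t(\xi_1,\xi_2)=\hat p_1(t^{3/2}\xi_1,t^{1/2}\xi_2)$, and $\hat p_1$ is a strictly positive Gaussian (the characteristic function of $(\sqrt2\int_0^1W_s\,\dif s,\sqrt2W_1)$).
\item Hence $\psi:=\phi^a_0/\hat p_1\in C^\infty_c$ and $\phi^a_j=\psi(2^{-aj}\cdot)\,\hat p_{2^{-2j}}$, so $\cR^a_jf=\cF^{-1}[\psi(2^{-aj}\cdot)]*(p_{2^{-2j}}*f)$, where the first factor has $j$-independent $L^1$-norm.
\item Since $p_t$ is even and the shear disappears under the supremum, $\|p_t*g\|_\infty=\|P_tg\|_\infty$. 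This gives $\|\cR^a_jf\|_\infty\lesssim\|P_{2^{-2j}}|f|\|_\infty$ for $j\ge0$, and the same with $\chi/\hat p_1$ in place of $\psi$ for $j=-1$.
\end{itemize}

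The $\lambda$-issue also runs opposite to what you wrote. The case $\lambda\le\lambda_1$ is trivial, because $\eta_\lambda\ge\eta_{\lambda_1}$ pointwise. Large $\lambda$ needs $p_r\lesssim\eta^{(0)}_\lambda(\theta r,\cdot)$ with $\theta=\theta(\lambda)$ large, and this produces $\cK^{(\beta)}_\lambda(f;\theta\delta)$ rather than $\cK^{(\beta)}_\lambda(f;\delta)$. You absorb it by running the monotonicity step only for $t\le\delta/\theta$ and extending by contraction, at the cost of a $\lambda$-dependent constant.

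With these repairs your proof is complete, and it differs from the paper's. The paper proves (i) by a direct Littlewood--Paley computation. It first derives the local-mass bound $\sup_z\int_{B^a_{t^{1/2}}}|f(z-u)|\,\dif u\lesssim t^{\beta/2+2d-1}\cK^{(\beta)}_\lambda(f;t)$ from the lower bound of $\eta^{(\beta)}_\lambda$ on $[t/2,t]\times B^a_{t^{1/2}}$. It then tiles $\mR^{2d}$ by anisotropic boxes of size $r_j=2^{-j}\wedge\delta^{1/2}$ and uses the decay of $\cF^{-1}\phi^a_j$. That argument is elementary and uses neither the semigroup property nor the Gaussian form of $\hat p_t$.

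Your version is shorter and makes (i) and (ii) dual, both passing through $\sup_{t\le1}t^{(2-\beta)/2}\|P_t|f|\|_\infty$. In exchange, it relies on the specific structure of the Kolmogorov kernel: positivity of $\hat p_1$, exact anisotropic scaling, and the Markov semigroup property.
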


\begin{proof}
(i) Fix $t\in(0,1)$ and $\lambda>0$.
Note that for $r\in[t/2,t]$ and $(y,w)\in B_{t^{1/2}}^a$,
\begin{equation*}
    \frac{|y-rw|^2+r^2|w|^2}{r^3}
    \le \frac{(|y|+r|w|)^2+r^2|w|^2}{t^3/2^3}\leq 5\cdot 2^3.
\end{equation*}
Consequently,
\begin{equation*}
    \eta^{(\beta)}_\lambda(r,y,w)
    \ge
    c_{\lambda}
    t^{-\beta/2-2d},
    \qquad
    r\in[t/2,t],\quad (y,w)\in B_{t^{1/2}}^a .
\end{equation*}
Thus, by definition,
\begin{align*}
    \cK^{(\beta)}_\lambda(f;t)
    &\ge
    \int_{t/2}^{t}
        \sup_{z\in\mR^{2d}}
        \int_{\mR^{2d}}
            \eta^{(\beta)}_\lambda(r,u)
            |f(z-u)|\,\dif u\,\dif r
\\
    &\ge
    c_{\lambda}
    t^{1-\beta/2-2d}
    \sup_{z\in\mR^{2d}}
        \int_{B_{t^{1/2}}^a}
            |f(z-u)|\,\dif u .
\end{align*}
Hence
\begin{equation}\label{local_mass_est}
    \sup_{z\in\mR^{2d}}
        \int_{B_{t^{1/2}}^a}
            |f(z-u)|\,\dif u
    \le
    C_{\lambda}
    t^{\beta/2+2d-1}
    \cK^{(\beta)}_\lambda(f;t).
\end{equation}
We now estimate the Littlewood--Paley blocks. Fix $\delta\in(0,1)$.
For $j\ge-1$, we set
\begin{equation*}
 r_j:=2^{-j}\wedge \delta^{1/2},\quad   \Gamma_j:=
    \left\{
        \gamma_{k,\ell}
        :=
        \big(r_j^{3}k,r_j\ell\big)
        :
        k,\ell\in\mathbb Z^d
    \right\}.
\end{equation*}
For $\gamma=\gamma_{k,\ell}\in\Gamma_j$, define the half-open anisotropic
rectangle
\begin{align*}
    Q_j(\gamma)
    &:=
    \gamma
    +
    \left[-\tfrac{r_j^{3}}{2},\tfrac{r_j^3}{2}\right)^d
    \times
    \left[-\tfrac{r_j}{2},\tfrac{r_j}{2}\right)^d.
\end{align*}
By the definition of $B_{r_j}^a$, it is easy to see that for some $c_d>1$,
\begin{equation}\label{RR2}
    Q_j(\gamma)-\gamma=\left[-\tfrac{r_j^{3}}{2},\tfrac{r_j^3}{2}\right)^d
    \times
    \left[-\tfrac{r_j}{2},\tfrac{r_j}{2}\right)^d
    \subset
    B_{c_d r_j}^a.
\end{equation}
By \eqref{RR1} and using the partition
$\mR^{2d}=\bigcup_{\gamma\in\Gamma_j}Q_j(\gamma)$, we have
\begin{align*}
    \|\mathcal R_j^a f\|_\infty
    &\le
    \sup_{z\in\mR^{2d}}
    \sum_{\gamma\in\Gamma_j}
        \int_{Q_j(\gamma)}
            |(\cF^{-1}\phi_j^a)(u)|
            |f(z-u)|\,\dif u
\\
    &\le
    \sup_{z\in\mR^{2d}}
    \sum_{\gamma\in\Gamma_j}
        \sup_{u\in Q_j(\gamma)}|(\cF^{-1}\phi_j^a)(u)|
        \int_{Q_j(\gamma)}
            |f(z-u)|\,\dif u
\\
    &\le
    \sum_{\gamma\in\Gamma_j}
        \sup_{u\in Q_j(\gamma)}|(\cF^{-1}\phi_j^a)(u)|
    \sup_{z\in\mR^{2d}}
        \int_{B^a_{c_dr_j}}
            |f(z-h)|\,\dif h,
\end{align*}
where the last step is due to the change of variable and \eqref{RR2}.
Since $\phi_j^a(\xi)=\phi^a_0(2^{-aj}\xi)$ and $\phi_0^a\in C^\infty_c(\mR^{2d})$, for every $m>0$ and $u\in Q_j(\gamma)$,
\begin{equation}\label{0607:00}
    |(\cF^{-1}\phi_j^a)(u)|
    \lesssim_{
    C_{m,d}}2^{4dj}
    (1+2^j|u|_a)^{-m}\lesssim_{
    C_{m,d}}2^{4dj}
    (1+2^j|\gamma|_a)^{-m},
    \qquad
    j\ge-1.
\end{equation}
where we have used
\begin{equation*}
    1+2^j|u|_a
    \asymp
    1+2^j|\gamma|_a,
\end{equation*}
with constants independent of $j$ and $\gamma$. 
Moreover, we note that for
$\gamma_{k,\ell}=(r_j^3k,r_j\ell)$,
\begin{align*}
    2^j|\gamma_{k,\ell}|_a
    &=
    2^j
    \left(
        |r_j^3k|^{1/3}
        +
        |r_j\ell|
    \right)=(2^j r_j)(
    |k|^{1/3}+|\ell|),
\end{align*} which implies that for any $m>4d$
\begin{align}\label{0607:01}
\sum_{\gamma\in\Gamma_j}
        (1+2^j|\gamma|_a)^{-m}
    &=
    \sum_{k,\ell\in\mathbb Z^d}
        (1+(2^j r_j)(|k|^{1/3}+|\ell|))^{-m}\no\\
        &\le 2 \int_{\mR^{2d}}(1+(2^j r_j)(|x|^{1/3}+|y|))^{-m}\dif x\dif y\no\\
        &\le 2(2^j r_j)^{-4d} \int_{\mR^{2d}}(1+|x|^{1/3}+|y|)^{-m}\dif x\dif y\lesssim  (2^j r_j)^{-4d}.
\end{align}
This together with \eqref{0607:00} yields
$$
\sum_{\gamma\in\Gamma_j}
        \sup_{u\in Q_j(\gamma)}|(\cF^{-1}\phi_j^a)(u)|
\lesssim 2^{4dj}(2^j r_j)^{-4d}=r_j^{-4d}.
$$
By \eqref{local_mass_est} with
$t=(c_dr_j)^2$ and \eqref{0607:01} we obtain
\begin{align*}
    \|\mathcal R_j^a f\|_\infty
    &\lesssim
    r_j^{-4d} 
    r_j^{\beta+4d-2}
    \cK^{(\beta)}_\lambda(f;(c_dr_j)^2)\lesssim  r_j^{\beta-2}
    \cK^{(\beta)}_\lambda(f;c_d^2\delta).
\end{align*}
Since 
$
    r_j^{-1}\le 2^{j}+\delta^{-\frac12}\lesssim \delta^{-\frac12}2^{j},
$
we have
\begin{equation*}
    \|\mathcal R_j^a f\|_\infty
    \lesssim 
    2^{(2-\beta)j}\delta^{\frac{\beta-2}2}
    \cK^{(\beta)}_\lambda(f;c_d^2\delta).
    \qquad j\ge-1.
\end{equation*}
Taking the supremum over all dyadic blocks yields \eqref{eq:kato-besov-convolution}. 

(ii) Since $\beta-2+\eps<0$,
by Lemma \ref{lem:kinetic-besov-semigroup}, we have for every $T>0$,
\begin{equation}\label{0607:03}
    \sup_{0<t\le T}
    \Big(t^{-(\beta-2+\eps)/2}\|P_t f\|_\infty\Big)\lesssim\|f\|_{\bC^{\beta-2+\eps}_a}.
\end{equation}
For every
$\lambda>0$ and every choice of spatial signs
$\sigma_1,\sigma_2\in\{-1,1\}$, by \eqref{density-bound}, there exist constants
$C_\lambda,c_\lambda>0$ such that, for all $0<r\le T$,
\begin{equation*}
    \eta^{(\beta)}_\lambda(r,\sigma_1 y,\sigma_2 w)
    \le
    C_\lambda r^{-\beta/2}
    p_{c_\lambda r}(y,w).
\end{equation*}
Using this Gaussian comparison, by \eqref{0607:03}, for each spatial sign choice, we obtain
\begin{align*}
    \sup_{z\in\mR^{2d}}
    \int_{\mR^{2d}}
        \eta^{(\beta)}_\lambda(r,u)
        f(z\pm u)\,\dif u
    &\lesssim r^{-\beta/2}
    \|P_{c_\lambda r}f\|_\infty\lesssim
    r^{-1+\eps/2}
    \|f\|_{\bC^{\beta-2+\eps}_a}.
\end{align*}
Integrating the last estimate over $r\in(0,\delta)$ gives
\begin{align*}
    \cK^{(\beta)}_\lambda(f;\delta)
    &\lesssim
    \int_0^\delta
        r^{-1+\eps/2}\,\dif r\,
        \|f\|_{\bC^{\beta-2+\eps}_a}\lesssim
    \delta^{\eps/2}
    \|f\|_{\bC^{\beta-2+\eps}_a}.
\end{align*}
This proves the estimate in (ii) of Lemma~\ref{lem:kato-besov-embedding} and completes the proof. 
\end{proof}

Let $K\in\mK_0$.
For finite signed measures $\mu,\eta$ for which the following products are well defined, we set
\begin{align}\label{Def1}
\sA_\mu\eta:=(K*\mu)\eta+(K*\eta)\mu.
\end{align}
In the applications below $\mu$ is a probability density with positive Besov
regularity and $\eta$ is either a finite signed measure or a negative Besov
distribution, so the expression is interpreted through the estimates that
follow.
We have the following estimate.
\bl\label{Le10}
Let $K\in\mK_0$, $\mu\in\cP(\mR^{2d})$, and let
$\eta\in\bB^{\alpha,2}_{2;a}$ be a finite signed measure.
For any $\beta\in(1,2)$ and $\alpha\in(1-\beta-2d,-2d)$, there is a constant $C=C(d,\beta,\alpha)>0$ such that
\begin{align*}
\|\sA_\mu\eta\|_{\bB^{\alpha+\beta-2,2}_{2;a}}
\lesssim_C
\Big(\|K\|_{\bB^{\beta-2,\infty}_{\infty;a}}
      +\|K\|_{\bB^{\beta-2,\infty}_{2;a}}\Big)
\Big(\|\mu\|_{\bB^{2d+1,\infty}_{1;a}}
      +\|\mu\|_{\bB^{2d+1,\infty}_{2;a}}\Big)\|\eta\|_{\bB^{\alpha,2}_{2;a}},
\end{align*}
whenever the right-hand side is finite.
\el
\begin{proof}
By Lemma \ref{Le23}, we have
\begin{align}\label{Lm1}
\|(K*\mu)\eta\|_{\bB^{\alpha,2}_{2;a}}
&\lesssim
\|K*\mu\|_{\bB^{2d+\beta-1,\infty}_{\infty;a}}
\|\eta\|_{\bB^{\alpha,2}_{2;a}}\lesssim
\|K\|_{\bB^{\beta-2,\infty}_{\infty;a}}
\|\mu\|_{\bB^{2d+1,\infty}_{1;a}}
\|\eta\|_{\bB^{\alpha,2}_{2;a}},
\end{align}
and
\begin{align}\label{Lm2}
\|(K*\eta)\mu\|_{\bB^{\alpha+\beta-2,2}_{2;a}}\lesssim \|K*\eta\|_{\bB^{\alpha+\beta-2,2}_{\infty;a}}
\|\mu\|_{\bB^{2d+1,\infty}_{2;a}}
\lesssim \|K\|_{\bB^{\beta-2,\infty}_{2;a}}\|\eta\|_{\bB^{\alpha,2}_{2;a}}\|\mu\|_{\bB^{2d+1,\infty}_{2;a}}.
\end{align}
Since $\alpha+\beta-2<\alpha$, the Besov embedding
$\bB^{\alpha,2}_{2;a}\hookrightarrow \bB^{\alpha+\beta-2,2}_{2;a}$ applies to
the first term. Combining this with the two estimates above gives the desired
bound.
\end{proof}

\subsection{Kinetic SDEs with Kato-drifts}
In this section we recall some results for the following kinetic SDE with Kato drifts:
\begin{align}\label{SDE0}
\dif X_t=V_t \dif t,\quad\
\dif V_t=b(t,X_t,V_t)\dif t+\sqrt{2}\,\dif W_t, 
\end{align}
where $b=b_0+b_1:\mR_+\times\mR^{2d}\to\mR^d$ satisfies the conditions in Assumption \ref{ass:main}.

Under Assumption \ref{ass:main}, for any $\mu_0\in \cP(\mR^{2d})$, by \cite[Theorem 4.2]{RZ25},
there exists a unique weak solution to SDE \eqref{SDE0} in the corresponding weak sense (equivalently, Definition~\ref{def:SDE} with $K=0$).
Moreover, by the classical Girsanov theorem and the Markov property, we have the following a priori Krylov estimate.

\bl\label{Le27}
Let $Z=(X,V)$ be a weak solution of SDE \eqref{SDE0} in the corresponding weak sense.
Then under Assumption \ref{ass:main}, for every $T>0$, there are constants
\begin{align*}
 \lambda_0=\lambda_0(d,\kappa_0)\in(0,1),\qquad    C_{T,b}=C(d,\kappa_0,T,\cK^{(0)}_{\lambda_0}(|b_1|^2;\cdot))>0,
\end{align*}
such that for any non-negative $f\in \mK_0$ and any $0\le s\le t\le T$,
\begin{align}\label{Kry10}
    \mE\left[
        \int_s^t f(r,Z_r)\dif r
        \bigm| \sF_s
    \right]
    \le
    C_{T,b} \cK^{(0)}_{\lambda_0}\big(f;t-s\big).
\end{align}
\el

\begin{proof}
We first consider the reference kinetic SDE with the regular drift $b_0$:
\begin{align*}
\begin{cases}
    \dif \bar X_r=\bar V_r \dif r,\\
    \dif \bar V_r=b_0(r,\bar Z_r)\dif r+\sqrt{2}\,\dif W_r,
\end{cases}
\qquad
\bar Z_r=(\bar X_r,\bar V_r).
\end{align*}
For $0\le s\le t\le T$ and $\bar Z_s=z$, denote by $\bar\mE_{s,z}$ the expectation with respect to the law of this reference process starting from $z$ at time $s$. Since $b_0$ is Lipschitz with linear growth controlled by $\kappa_0$, the Krylov estimate for the reference equation gives that (see \cite[(4.2)]{RZ25} with $b_1=0$ for instance), for any non-negative $g\in\mK_0$,
$$
    \bar\mE_{s,z}\left[
        \int_s^t g(r,\bar Z_r)\dif r
    \right]
    \le
    C_0 \cK^{(0)}_{\lambda_0}\big(g;t-s\big),
$$
where $C_0=C_0(d,\kappa_0)$ and $\lambda_0=\lambda_0(d,\kappa_0)\in(0,1)$. 
Using the standard Khasminskii argument (see \cite{XZ20}),  one obtains that for any $m\in\mN$,
\begin{align}\label{Kry-b0-second}
    \bar\mE_{s,z}\left[
        \left(
            \int_s^t g(r,\bar Z_r)\dif r
        \right)^m
    \right]
    \le
    m!\left(C_0 \cK^{(0)}_{\lambda_0}\big(g;t-s\big)\right)^m.
\end{align}
Since $|b_1|^2\in\mK_0$, there is a $\delta_0>0$ small enough such that 
$$
    C_0\cK^{(0)}_{\lambda_0}(|b_1|^2;\delta_0)\le 1/12 .
$$
Let $0\leq s\leq t\leq T$
with $t-s\leq \delta_0$.
Thus by \eqref{Kry-b0-second}, we have
\begin{align*}
    \bar\mE_{s,z}\left[
        \left(
            \int_s^t 6|b_1(r,\bar Z_r)|^2\dif r
        \right)^m
    \right]
    \le
    m!\left(6C_0 \cK^{(0)}_{\lambda_0}\big(|b_1|^2;t-s\big)\right)^m\le m!2^{-m},
\end{align*}
which gives by Taylor's expansion
\begin{align*}
    \sup_z
    \bar\mE_{s,z}
    \exp\left\{
        6\int_s^t |b_1(r,\bar Z_r)|^2\dif r
    \right\}
    \le 2 .
\end{align*}
For general $0\leq s\leq t\leq T$ with $t-s>\delta_0$,
dividing any interval $[s,t]\subset[0,T]$ into subintervals of length at most $\delta_0$ 
and using the Markov property, we obtain
\begin{align}\label{global-exp-b1-simple}
    \sup_{0\le s\le t\le T}\sup_z
    \bar\mE_{s,z}
    \exp\left\{
        6\int_s^t |b_1(r,\bar Z_r)|^2\dif r
    \right\}
    \le 2^{\lceil\frac{T}{\delta_0}\rceil},
\end{align}
where $\lceil x\rceil$ is the smallest integer greater than $x$. 

Fix $s\in[0,t]$. Define the exponential martingale
\begin{align*}
    M_{s,t}
    :=
    \exp\left(
        \frac1{\sqrt{2}}\int_s^t b_1(r,\bar Z_r)\cdot\dif W_r
        -
        \frac14\int_s^t |b_1(r,\bar Z_r)|^2\dif r
    \right).
\end{align*}
By Girsanov's theorem, under the probability measure $M_{s,T}\dif \bar\mP_{s,z}$ on $\sF_T$, the process
\begin{align*}
    W_t^{b}:=W_t-\frac1{\sqrt{2}}\int_s^t b_1(r,\bar Z_r)\dif r,
    \qquad t\in[s,T],
\end{align*}
is a Brownian motion, and $\bar Z$ solves
\begin{align*}
    \dif \bar X_t=\bar V_t\dif t,\qquad
    \dif \bar V_t=\big(b_0(t,\bar Z_t)+b_1(t,\bar Z_t)\big)\dif t+\sqrt{2}\,\dif W_t^{b}.
\end{align*}
By weak uniqueness of \eqref{SDE0}, this law coincides with the law of $Z$ starting from $z$ at time $s$.
By Cauchy's inequality, the supermartingale property of stochastic exponentials, and \eqref{global-exp-b1-simple},
\begin{align}\label{M2-simple-bound}
    \sup_{0\le s\le t\le T}\sup_z
    \bar\mE_{s,z}M_{s,t}^2
    \le C_{T,b}<\infty .
\end{align}

Let $\mE_{s,z}$ denote the expectation with respect to the solution of \eqref{SDE0} starting from $z$ at time $s$. By Girsanov's theorem, Cauchy's inequality, \eqref{Kry-b0-second}, and \eqref{M2-simple-bound}, we get
\begin{align*}
    \mE_{s,z}\left[
        \int_s^t f(r,Z_r)\dif r
    \right]
    &=
    \bar\mE_{s,z}\left[
        M_{s,t}
        \int_s^t f(r,\bar Z_r)\dif r
    \right]                                                   \\
    &\le
    \left\{
        \bar\mE_{s,z}M_{s,t}^2
    \right\}^{1/2}
    \left\{
        \bar\mE_{s,z}
        \left[
            \left(
                \int_s^t f(r,\bar Z_r)\dif r
            \right)^2
        \right]
    \right\}^{1/2}                                             \\
    &\le
    C_{T,b}\,\cK^{(0)}_{\lambda_0}\big(f;t-s\big).
\end{align*}
Finally, by the Markovian property, we have
\begin{align*}
    \mE\left[
        \int_s^t f(r,Z_r)\dif r
        \bigm| \sF_s
    \right]
    =
    \mE_{s,Z_s}\left[
        \int_s^t f(r,Z_r)\dif r
    \right].
\end{align*}
Combining the previous estimate with this identity gives \eqref{Kry10}.
\end{proof}

As a corollary, we have the following important exponential integrability estimate for singular functionals
(see \cite[Lemma 3.5]{XZ20} and \cite[Corollary 3.5]{ZZ18} for a standard proof).

\bc(Khasminskii's estimate)\label{Kha1}
Under Assumption \ref{ass:main}, let $Z=(X,V)$ be the unique solution of SDE \eqref{SDE0}. 
For any $a>0$, $\eps\in(0,1)$ and any non-negative $f\in \mK_0$, there is a constant
$\delta_0=\delta_0(a,\eps,T,b,f)>0$ such that for all $0\leq s\leq t\leq T$ with $t-s\leq\delta_0$,
\begin{align}\label{Kha1-est}
    \mE\left[
        \exp\left\{
            a\int_s^t f(r,Z_r)\dif r
        \right\}
        \biggm|\cF_s
    \right]
    \le\frac{1}{1-\eps}.
\end{align}
\ec

\begin{proof}
By \eqref{Kry10} and
using the standard Khasminskii argument, we have for any $m\in\mN$,
$$
\mE\left[
        \left(
            \int_s^t f(r,Z_r)\dif r
        \right)^m\Big|\cF_s
    \right]
    \le
    m!\left(C_{T,b} \cK^{(0)}_{\lambda_0}\big(f;t-s\big)\right)^m.
$$
Since $\lim_{\delta\to 0}\cK^{(0)}_{\lambda_0}\big(f;\delta\big)=0$, for any $\eps\in(0,1)$ and $a>0$,
one can choose $\delta_0$ small enough so that 
$$
\cK^{(0)}_{\lambda_0}\big(f;\delta_0\big)\leq\eps/(aC_{T,b}).
$$
Thus, for all $0\leq s\leq t\leq T$ with $t-s\leq\delta_0$,
$$
\mE\left[
    \exp\left\{
        a\int_s^t f(r,Z_r)\dif r
    \right\}
    \biggm|\cF_s
\right]
    \leq\sum_{m=0}^\infty\eps^m=\left[
        1-
        \eps
    \right]^{-1}.
    $$
The proof is complete.
\end{proof}

\subsection{Relative entropy}
In this subsection we recall the definition and basic properties of relative entropy.
Let $E$ be a Polish space and $\mu,\nu$ be two probability measures on $E$. The relative entropy $\cH(\mu|\nu)$ is defined by
\begin{align}\label{Rela}
\cH(\mu|\nu):=
\left\{
\begin{aligned}
&\int_E \frac{\dif\mu}{\dif\nu}\log\Big(\frac{\dif\mu}{\dif\nu}\Big)\dif\nu,&\mu\ll\nu,\\
&\infty,&\mbox{otherwise.}
\end{aligned}
\right.
\end{align}
Since $x\mapsto x\log x$ is convex on $[0,\infty)$, by Jensen's inequality, we have $\cH(\mu|\nu)\geq 0$.

The following theorem collects the main properties of $\cH(\cdot|\cdot)$ used below (see \cite[Theorem 2.1(ii)]{BV05}, \cite[Lemma 1.4.3(a)]{DE97}, \cite[Lemma 3.9]{DM01} and \cite[Theorem 2.6]{BD19}).
\bt
\begin{enumerate}[(i)]
\item (Pinsker's inequality) For any two probability measures $\mu,\nu$,
\begin{align}\label{Pin0}
\|\mu-\nu\|^2_{\rm var}\leq 2\cH(\mu|\nu).
\end{align}
\item (Data processing inequality) For any  $\mu,\nu\in\cP(E)$ and Borel measurable function $f:E\to F$, 
\begin{align}\label{DPin}
\cH(\mu\circ f^{-1}|\nu\circ f^{-1})\le \cH(\mu|\nu). 
\end{align}
\item (Variational representation of the relative entropy) For any $\mu,\nu\in\cP(E)$,
\begin{align}\label{Var}
\cH(\mu|\nu)=\sup_{\psi\in\cB_b(E)}\left(\int_E\psi\dif\mu-\log\int_E\e^\psi\dif\nu\right),
\end{align}
where $\cB_b(E)$ is the set of all bounded Borel measurable functions.
\item (Additive inequality) Let $\mu^N$ be a symmetric probability measure on $E^N$ and $\mu\in\cP(E)$. Then for any $k\leq N$, 
\begin{align}\label{BB4}
\cH(\mu^{N,k}|\mu^{\otimes k})\leq\frac{2k}{N}\cH(\mu^N|\mu^{\otimes N}),
\end{align}
where $\mu^{N,k}$ is the marginal distribution of the first $k$-component of $\mu^N$.
\item (Chain rule) 
Let $E,F$ be two Polish spaces, $\mu,\nu\in \cP(E)$ and $P(x,\dif y), Q(x,\dif y)$ be two probability kernels on $E\times \sB(F)$. Then 
\begin{align}\label{ent:chain}
    \cH\left(P(x,\dif y)\mu(\dif x)|Q(x,\dif y)\nu(\dif x)\right)=\cH(\mu|\nu)+\int_{E} \cH\left(P(x,\cdot)|Q(x,\cdot)\right)\mu(\dif x).
\end{align}
\end{enumerate}
\et
We need the following corollary.
\begin{corollary}\label{Cor217}
Let $(X_r)_{r\geq 0}$ and $(Y_r)_{r\geq 0}$ be two continuous Markov processes on a Polish space $E$.
Let $0\leq s\leq t$. Assume that $\cH\big(\cL(X_{[0,t]})\,\big|\,\cL(Y_{[0,t]})\big)<\infty .$
Then
\begin{align}\label{ent:increment}
&\cH\big(\cL(X_{[0,t]})\,\big|\,\cL(Y_{[0,t]})\big)
-\cH\big(\cL(X_{[0,s]})\,\big|\,\cL(Y_{[0,s]})\big) \notag\\
&\qquad =
\cH\big(\cL(X_{[s,t]})\,\big|\,\cL(Y_{[s,t]})\big)
-\cH\big(\cL(X_s)\,\big|\,\cL(Y_s)\big).
\end{align}
\end{corollary}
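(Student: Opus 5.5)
The plan is to apply the chain rule \eqref{ent:chain} twice, once at the splitting time $s$ and once at time $0$, and then compare the two resulting identities. All path laws live on $C([0,t];E)$, a Polish space. The path $X_{[0,t]}$ is identified with the pair $(X_{[0,s]},X_{[s,t]})$, and similarly for $Y$.

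First, I disintegrate $\cL(X_{[0,t]})$ with respect to its restriction to $[0,s]$. By the Markov property of $X$, the conditional law of $X_{[s,t]}$ given $X_{[0,s]}$ depends only on the endpoint $X_s$. Write it as $P^X_{s,t}(X_s,\dif\omega)$. Define $P^Y_{s,t}$ in the same way for $Y$. The chain rule \eqref{ent:chain}, applied with $E=C([0,s];E)$ and $F=C([s,t];E)$, then gives
\begin{align*}
\cH\big(\cL(X_{[0,t]})|\cL(Y_{[0,t]})\big)=\cH\big(\cL(X_{[0,s]})|\cL(Y_{[0,s]})\big)+\int \cH\big(P^X_{s,t}(x,\cdot)|P^Y_{s,t}(x,\cdot)\big)\,\cL(X_s)(\dif x).
\end{align*}

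Second, I disintegrate $\cL(X_{[s,t]})$ with respect to its initial value $X_s$. Its conditional law given $X_s=x$ is exactly $P^X_{s,t}(x,\cdot)$, and the same holds for $Y$. Applying \eqref{ent:chain} again, now with base space $E$, yields
\begin{align*}
\cH\big(\cL(X_{[s,t]})|\cL(Y_{[s,t]})\big)=\cH\big(\cL(X_s)|\cL(Y_s)\big)+\int \cH\big(P^X_{s,t}(x,\cdot)|P^Y_{s,t}(x,\cdot)\big)\,\cL(X_s)(\dif x).
\end{align*}
Subtracting the two identities eliminates the common integral term and gives \eqref{ent:increment}.

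The one delicate point is that the subtraction must be legitimate, meaning no $\infty-\infty$ occurs. The data processing inequality \eqref{DPin} handles this. The restriction maps $\omega\mapsto\omega|_{[0,s]}$, $\omega\mapsto\omega|_{[s,t]}$ and $\omega\mapsto\omega(s)$ are measurable. Hence $\cH(\cL(X_{[0,s]})|\cL(Y_{[0,s]}))$, $\cH(\cL(X_{[s,t]})|\cL(Y_{[s,t]}))$ and $\cH(\cL(X_s)|\cL(Y_s))$ are all bounded by the finite quantity $\cH(\cL(X_{[0,t]})|\cL(Y_{[0,t]}))$. Consequently the common integral term is finite as well.

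The measurability of the kernels $x\mapsto P^{X}_{s,t}(x,\cdot)$ comes from the existence of regular conditional distributions on Polish spaces. These kernels are defined $\cL(X_s)$-a.e., which suffices because they are only integrated against $\cL(X_s)$. Likewise the kernels for $Y$ need only be defined $\cL(Y_s)$-a.e.; finiteness of the entropy gives $\cL(X_s)\ll\cL(Y_s)$, so this is enough. The Markov property is used exactly to identify the conditional law given the past with the conditional law given the present. I expect this identification to be the only step needing care, and I would state it through regular conditional probabilities.
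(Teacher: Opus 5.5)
Your proposal is correct and follows the paper's proof: you apply the chain rule twice, with the Markov kernels $P^X_{s,t}$ and $P^Y_{s,t}$, once over $C([0,s];E)$ and once over the time-$s$ marginal, and then subtract the common integral term. Your use of the data processing inequality to exclude $\infty-\infty$, and your remarks on the a.e.-defined kernels and on $\cL(X_s)\ll\cL(Y_s)$, make explicit points the paper leaves implicit.
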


\begin{proof}
For $x\in E$, denote by
\[
P^X_{s,t}(x,\cdot):=\cL\big((X_r)_{r\in[s,t]}\mid X_s=x\big),
\qquad
P^Y_{s,t}(x,\cdot):=\cL\big((Y_r)_{r\in[s,t]}\mid Y_s=x\big)
\]
the transition kernels on $C([s,t];E)$. By the Markov property,
\[
\cL(X_{[0,t]})
=
\cL(X_{[0,s]})(\dif\omega)\,
P^X_{s,t}(\omega_s,\dif\eta),\ \ 
\cL(Y_{[0,t]})
=
\cL(Y_{[0,s]})(\dif\omega)\,
P^Y_{s,t}(\omega_s,\dif\eta).
\]
Applying the chain rule for relative entropy yields
\begin{align*}
\cH\big(\cL(X_{[0,t]})\,\big|\,\cL(Y_{[0,t]})\big)  =
\cH\big(\cL(X_{[0,s]})\,\big|\,\cL(Y_{[0,s]})\big)
+
\int_E
\cH\big(P^X_{s,t}(x,\cdot)\,\big|\,P^Y_{s,t}(x,\cdot)\big)
\,\cL(X_s)(\dif x).
\end{align*}
On the other hand,
\[
\cL(X_{[s,t]})
=
\cL(X_s)(\dif x)\,P^X_{s,t}(x,\dif\eta),
\qquad
\cL(Y_{[s,t]})
=
\cL(Y_s)(\dif x)\,P^Y_{s,t}(x,\dif\eta).
\]
Applying the chain rule once more gives
\begin{align*}
\cH\big(\cL(X_{[s,t]})\,\big|\,\cL(Y_{[s,t]})\big)
=
\cH\big(\cL(X_s)\,\big|\,\cL(Y_s)\big) +
\int_E
\cH\big(P^X_{s,t}(x,\cdot)\,\big|\,P^Y_{s,t}(x,\cdot)\big)
\,\cL(X_s)(\dif x).
\end{align*}
Subtracting the two identities proves \eqref{ent:increment}.
\end{proof}

We recall the following entropy formula for the martingale solutions of classical SDEs,
which is a consequence of Girsanov's theorem (see \cite[Lemma 4.4 and Remark 4.5]{Lacker21} for the most general form, see also \cite[Lemma 4.2]{HRZ24}).
\bl\label{Lem34}
For $i=1,2$, let $b^i:\mR_+\times\mR^{2d}\to\mR^d$ be two Borel measurable functions.
Suppose that for any initial data $(x,v)\in\mR^{2d}$, there exists a unique weak solution to SDE \eqref{SDE0} with $b=b^2$. 
Let $\mu^1_0,\mu^2_0\in\cP(\mR^{2d})$ be two initial distributions.
For $i=1,2$, denote by SDE$_i$ the SDE \eqref{SDE0} with drift $b=b^i$.
Then, for any weak solutions $Z^i=(X^i, V^i)$ to SDE$_i$ ($i=1,2$) with $\cL(Z^i_0)=\mu^i_0$, and for any $t\geq 0$, letting $P^i_{[0,t]}:=\cL(Z^i_{[0,t]})$, we have
$$
\cH(P^1_{[0,t]}|P^2_{[0,t]})\leq\cH(\mu^1_0|\mu^2_0)+\frac14\mE\left(\int^t_0\bigl|b^1(s,Z_s^1)-b^2(s,Z_s^1)\bigr|^2\dif s\right).
$$
\el

\subsection{An abstract Hilbert-space subgaussian estimate}\label{sec:subgaussian}

This section is independent of the kinetic SDE setting and can be read on its own. We prove three abstract probabilistic estimates that form the backbone of our propagation of chaos and central limit arguments. The core idea is the following: if a family of conditionally centered Hilbert-space-valued random variables has uniform square-exponential moments, then empirical means of these variables satisfy uniform exponential moment bounds with constants that do not grow with the number of summands. This substitutes for classical concentration inequalities (which require boundedness or subgaussian tails of the individual variables) and is made possible by the anisotropic Kato--Krylov bounds of Section~\ref{sec:pre} which provide precisely the required conditional square-exponential integrability.

The following lemma is a conditional Hilbert-space version of the
standard implication from square-exponential integrability to
sub-Gaussian moment-generating-function bounds. This implication is well known;
see, for instance, \cite{Pinelis1994,Fukuda1990,Antonini1997} and the recent exposition \cite{MollenhauerSchillings2023}. We include the proof to keep track of the constants.

\begin{lemma}\label{lem:linform-improved}
Let $\mH$ be a separable Hilbert space, let $\cG$ be a sub-$\sigma$-field, and let $\xi$ be an $\mH$-valued random variable such that for some deterministic constants $a>0$ and $\Lambda\in(1,\infty)$,
\begin{equation}\label{eq:abstract-assumption}
\E[\xi\mid \cG]=0,
\qquad
\E\bigl[\e^{a\norm{\xi}_\mH^2}\mid\cG\bigr]\le \Lambda.
\end{equation}
Then for every $\mH$-valued $\cG$-measurable $\eta$,
\begin{equation}\label{eq:linform-bound}
\E\bigl[\e^{\inner{\eta}{\xi}_\mH}\mid\cG\bigr]
\le \exp\bigl(L\norm{\eta}_\mH^2\bigr)
\qquad\text{a.s.},
\end{equation}
where $L = \frac{2\log \Lambda + 1/2}{a}$,
and for every
$0\le \theta<a$,
\begin{equation}\label{eq:onestep-holder}
\E\bigl[
\exp(\theta\norm{\eta+\xi}_{\mH}^{2})
\mid \cG
\bigr]
\le
\exp\left\{
\frac{\log \Lambda}{a}\theta
+
\left(
\theta+
\frac{(8\log \Lambda+2)\theta^{2}}{a-\theta}
\right)
\norm{\eta}_{\mH}^{2}
\right\}.
\end{equation}
\end{lemma}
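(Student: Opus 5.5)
The plan is to prove the linear-form bound \eqref{eq:linform-bound} first, by a case split on the size of $\norm{\eta}_\mH^2$ relative to $a$. Then \eqref{eq:onestep-holder} will follow from \eqref{eq:linform-bound} by expanding the square and applying a conditional H\"older inequality. Throughout, $\eta$ is $\cG$-measurable, so it can be treated as a constant inside $\E[\,\cdot\mid\cG]$. We write $X:=\inner{\eta}{\xi}_\mH$, so that $\E[X\mid\cG]=0$ and $|X|\le\norm{\eta}_\mH\norm{\xi}_\mH$. Conditional Jensen applied to the assumption \eqref{eq:abstract-assumption} gives
$$
\E[\norm{\xi}_\mH^2\mid\cG]\le \frac{\log\Lambda}{a}.
$$

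\emph{Large $\eta$.} Suppose $\norm{\eta}_\mH^2\ge a/2$ (on the corresponding $\cG$-measurable event). We use Young's inequality $|X|\le a\norm{\xi}_\mH^2+\norm{\eta}_\mH^2/(4a)$. This gives
$$
\E[\e^X\mid\cG]\le \Lambda\, \e^{\norm{\eta}_\mH^2/(4a)}.
$$
On this event $\log\Lambda\le 2\log\Lambda\,\norm{\eta}_\mH^2/a$, so the right-hand side is at most $\exp\bigl((2\log\Lambda+\tfrac14)\norm{\eta}_\mH^2/a\bigr)$, which is within $L$.

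\emph{Small $\eta$.} Suppose $\norm{\eta}_\mH^2< a/2$. We use the elementary bound $\e^x\le 1+x+\tfrac{x^2}{2}\e^{|x|}$; the linear term drops out by the centering. This gives
$$
\E[\e^X\mid\cG]\le 1+\tfrac12\norm{\eta}_\mH^2\,\E\bigl[\norm{\xi}_\mH^2\e^{\norm{\eta}_\mH\norm{\xi}_\mH}\mid\cG\bigr].
$$
We then need to bound the conditional expectation on the right by a multiple of $(\log\Lambda+\text{const})/a$ and conclude with $1+y\le \e^y$. To do so, we absorb $\e^{\norm{\eta}\norm{\xi}}$ via $\norm{\eta}\norm{\xi}\le \tfrac a2\norm{\xi}^2\cdot\theta'+\ldots$, and use Jensen/H\"older to convert the moment $\E[\norm{\xi}^2\e^{c a\norm{\xi}^2}]$ with $c<1$ into $\log\Lambda$-type quantities rather than powers of $\Lambda$. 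For example, we can apply the convexity inequality $y\e^{cy}\le \tfrac{1}{(1-c)}\e^{y}$-style bounds only where necessary, and otherwise use the $\log\Lambda/a$ second-moment bound.

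I expect this small-$\eta$ constant tracking to be the main obstacle. A crude argument easily produces $\Lambda$ instead of $\log\Lambda$. If the exact constant $2\log\Lambda+1/2$ is hard to reach, the fallback is to adjust the threshold between the two regimes, so that the small-$\eta$ regime only has to use second-order Taylor behaviour while the large-$\eta$ regime absorbs $\log\Lambda$.

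For \eqref{eq:onestep-holder}, we write
$$
\norm{\eta+\xi}_\mH^2=\norm{\eta}_\mH^2+2\inner{\eta}{\xi}_\mH+\norm{\xi}_\mH^2 .
$$
Factoring out $\e^{\theta\norm{\eta}^2}$, we apply conditional H\"older with exponents $p=a/\theta$ and $q=a/(a-\theta)$:
$$
\E\bigl[\e^{\theta\norm{\xi}^2}\e^{2\theta\inner{\eta}{\xi}}\mid\cG\bigr]
\le \bigl(\E[\e^{a\norm{\xi}^2}\mid\cG]\bigr)^{\theta/a}\bigl(\E[\e^{\inner{2\theta q\eta}{\xi}}\mid\cG]\bigr)^{1/q}.
$$
The first factor is at most $\exp(\theta\log\Lambda/a)$. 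For the second factor, \eqref{eq:linform-bound} applied to the $\cG$-measurable vector $2\theta q\eta$ gives $\exp\bigl(4\theta^2 qL\norm{\eta}^2\bigr)$. Since $4qL=4(2\log\Lambda+\tfrac12)/(a-\theta)=(8\log\Lambda+2)/(a-\theta)$, this is exactly the coefficient in \eqref{eq:onestep-holder}, and the proof is complete once \eqref{eq:linform-bound} is in hand.
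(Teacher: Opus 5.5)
Your derivation of \eqref{eq:onestep-holder} from \eqref{eq:linform-bound} is correct. It is exactly the paper's H\"older argument, with $4qL=(8\log\Lambda+2)/(a-\theta)$. Your large-$\eta$ case, $\norm{\eta}_\mH^2\ge a/2$, is also fine.

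\textbf{The gap.} The regime $\norm{\eta}_\mH^2<a/2$ of \eqref{eq:linform-bound} is where the lemma's content lies. You do not prove it, and the route you sketch cannot close. To finish with $1+y\le\e^y$ you would need $\E[\norm{\xi}_\mH^2\e^{\norm{\eta}_\mH\norm{\xi}_\mH}\mid\cG]\le 2L=(4\log\Lambda+1)/a$ for all such $\eta$, and this is false. Take $\cG$ trivial and $\xi=\pm re$, with $e$ a unit vector and the two signs equally likely. Then $\Lambda=\e^{ar^2}$, and for $\norm{\eta}_\mH^2=s$,
\[
\E\bigl[\norm{\xi}_\mH^2\e^{\norm{\eta}_\mH\norm{\xi}_\mH}\bigr]=\frac{\log\Lambda}{a}\exp\Bigl(\sqrt{s\log\Lambda/a}\Bigr).
\]
For fixed $s/a$ and $\Lambda\to\infty$ this outgrows every multiple of $\log\Lambda$.

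Moving the threshold does not help. A $\Lambda$-independent Young split $|X|\le\eps a\norm{\xi}^2+\norm{\eta}^2/(4\eps a)$ matches $L$ for all $\Lambda>1$ only if $\eps\ge 1/2$ and $\norm{\eta}^2\ge a/4$. The example still breaks the small-regime bound at $s=a/4$. Your Jensen/H\"older idea with fixed $c\in(0,1)$ also fails: in the same example $\E[\norm{\xi}^2\e^{ca\norm{\xi}^2}]=\Lambda^{c}\log\Lambda/a$, which is a genuine power of $\Lambda$. The root problem is that for $\norm{\eta}^2\asymp a$ the target $\e^{L\norm{\eta}^2}$ is exponentially large in $\log\Lambda$. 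Linearizing it with $1+y\le\e^y$ throws that room away.

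\textbf{The missing idea: symmetrization.} This is the paper's route, and it avoids Taylor remainders entirely. For $\norm{\eta}_\mH^2\le a$, let $\xi'$ be a copy of $\xi$ that is conditionally independent given $\cG$, and set $X'=\inner{\eta}{\xi'}_\mH$. Jensen and centering give $\E[\e^{-X'}\mid\cG]\ge 1$. Since $X-X'$ is conditionally symmetric,
\[
\E[\e^{X}\mid\cG]\le\E[\cosh(X-X')\mid\cG]\le\E\bigl[\e^{X^2+X'^2}\mid\cG\bigr]=\bigl(\E[\e^{X^2}\mid\cG]\bigr)^2\le\Lambda^{2\norm{\eta}_\mH^2/a}.
\]
The last step uses $X^2\le\norm{\eta}^2\norm{\xi}^2$ and conditional Jensen with exponent $\norm{\eta}^2/a\le 1$. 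This is where $\log\Lambda$, rather than $\Lambda$, enters. The range $\norm{\eta}^2\le a$ overlaps your large-$\eta$ range, so together they give \eqref{eq:linform-bound} with the stated $L$.

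\textbf{Repairing your Taylor route.} It can be made to work, but only with $\Lambda$-dependent choices that are absent from your proposal.
\begin{enumerate}[(i)]
\item In Young's split, optimize $\eps$ depending on $\Lambda$ and $\norm{\eta}^2$. This covers $\norm{\eta}^2/a\ge\log\Lambda/(2\log\Lambda+\frac12)^2$.
\item Below that threshold, absorb $\e^{\norm{\eta}\norm{\xi}}$ with $\eps=\min\{\frac12,\frac1{2\log\Lambda}\}$.
\item Then use the exponential-tilting bound $\E[\norm{\xi}^2\e^{\eps a\norm{\xi}^2}\mid\cG]\le\Lambda^{\eps}\log\Lambda/((1-\eps)a)$. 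With this $\eps$ one has $\Lambda^{\eps}\le\e^{1/2}$, and the constants close.
\end{enumerate}
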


\begin{proof}
(i) Assume $\eta\not=0$. Let $X = \inner{\eta}{\xi}_\mH/\|\eta\|_\mH$. We aim to bound $\E[\e^{t X} \mid \cG]$ for any $t \in \mathbb{R}$. Note that $\E[X \mid \cG] = 0$ and $\E[\e^{a X^2} \mid \cG] \le \E[\e^{a\norm{\xi}_\mH^2} \mid \cG] \le \Lambda$. We divide the proof into two cases based on the magnitude of $t$.

\textbf{Case 1:} Suppose $t^2 \le a$. We employ a symmetrization argument. Let $X'$ be an independent copy of $X$ conditionally on $\cG$. By Jensen's inequality and $\E[X' \mid \cG] = 0$, we have
$$
1=\e^{-t\E[X' \mid \cG]}\leq \E[\e^{-tX'} \mid \cG].
$$
Hence,
\begin{equation*}
\E[\e^{tX} \mid \cG] \le \E[\e^{t(X-X')} \mid \cG] = \E[(\e^{t(X-X')}+\e^{-t(X-X')}) \mid \cG]/2.
\end{equation*}
Using the elementary inequality $(\e^{z}+\e^{-z})/2 \le \e^{z^2/2}$ and the algebraic inequality $(X-X')^2 \le 2X^2 + 2(X')^2$, we obtain
\begin{equation*}
\E[\e^{tX} \mid \cG] \le \E\left[\e^{\frac{t^2}{2}(X-X')^2} \Bigm| \cG\right] \le \E\left[\e^{t^2 X^2 + t^2 (X')^2} \Bigm| \cG\right] = \left( \E[\e^{t^2 X^2} \mid \cG] \right)^2.
\end{equation*}
Since $t^2 \le a$, Jensen's inequality implies
\begin{equation*}
\E[\e^{t^2 X^2} \mid \cG] = \E\left[ (\e^{a X^2})^{t^2/a} \Bigm| \cG\right] \le \left( \E[\e^{a X^2} \mid \cG] \right)^{t^2/a} \le \Lambda^{t^2/a}.
\end{equation*}
Substituting this back yields
\begin{equation*}
\E[\e^{tX} \mid \cG] \le (\Lambda^{t^2/a})^2 = \exp\left( \frac{2\log \Lambda}{a} t^2 \right).
\end{equation*}

\textbf{Case 2:} Suppose $t^2 > a$. By Young's inequality, $|tX| \le \frac{t^2}{2a} + \frac{a X^2}{2}$. Thus,
\begin{equation*}
\E[\e^{tX} \mid \cG] \le \E[\e^{|t X|} \mid \cG] \le \e^{t^2/(2a)} \E[\e^{a X^2 / 2} \mid \cG].
\end{equation*}
Again by Jensen's inequality, $\E[\e^{a X^2 / 2} \mid \cG] \le (\E[\e^{a X^2} \mid \cG])^{1/2} \le \Lambda^{1/2} = \e^{\frac{\log \Lambda}{2}}$. Therefore,
\begin{equation*}
\E[\e^{tX} \mid \cG] \le \exp\left( \frac{t^2}{2a} + \frac{\log \Lambda}{2} \right).
\end{equation*}
Because we are in the regime $t^2 > a$, we have $\frac{1}{2}\log \Lambda < \frac{t^2}{2a}\log \Lambda \le \frac{2\log \Lambda}{a} t^2$ (given $\Lambda > 1$). Hence,
\begin{equation*}
\frac{t^2}{2a} + \frac{\log \Lambda}{2}\le \frac{t^2}{2a} + \frac{2\log \Lambda}{a}t^2 = \left( \frac{2\log \Lambda + 1/2}{a} \right) t^2 = L t^2.
\end{equation*}
Combining both cases, we conclude that for all $t \in \mathbb{R}$, $\E[\e^{tX} \mid \cG] \le \exp(L t^2)$, establishing \eqref{eq:linform-bound}.

(ii) The case $\theta=0$ is trivial. Let $0<\theta<a$ and set
\[
p=\frac{a}{a-\theta},
\qquad
q=\frac{a}{\theta},
\qquad
\frac1p+\frac1q=1.
\]
Expanding the square gives
\[
\E\bigl[
\exp(\theta\norm{\eta+\xi}_{\mH}^{2})
\mid \cG
\bigr]
=
\exp(\theta\norm{\eta}_{\mH}^{2})
\E\bigl[
\exp(2\theta\inner{\eta}{\xi}_{\mH}
+\theta\norm{\xi}_{\mH}^{2})
\mid \cG
\bigr].
\]
By H\"older's inequality, we have
\begin{align*}
&\E\bigl[
\exp(2\theta\inner{\eta}{\xi}_{\mH}
+\theta\norm{\xi}_{\mH}^{2})
\mid \cG
\bigr]
\le
\left(
\E\bigl[
\exp(2p\theta\inner{\eta}{\xi}_{\mH})
\mid\cG
\bigr]
\right)^{1/p}
\left(
\E\bigl[
\exp(q\theta\norm{\xi}_{\mH}^{2})
\mid\cG
\bigr]
\right)^{1/q}.
\end{align*}
Since $q\theta=a$, the second factor is bounded by
\[
\left(
\E\bigl[
\exp(a\norm{\xi}_{\mH}^{2})
\mid\cG
\bigr]
\right)^{1/q}
\le
\Lambda^{\theta/a}
=
\exp\left(\frac{\log \Lambda}{a}\theta\right).
\]
For the first factor, the linear functional estimate gives
\[
\left(
\E\bigl[
\exp(2p\theta\inner{\eta}{\xi}_{\mH})
\mid\cG
\bigr]
\right)^{1/p}
\le
\exp\left(
4Lp\theta^{2}\norm{\eta}_{\mH}^{2}
\right).
\]
Combining these estimates proves \eqref{eq:onestep-holder}.
\end{proof}

The following lemma is a conditional Hilbert-space version of standard
exponential integrability estimates for sums of Banach-space valued
random variables; see, e.g., Kuelbs \cite{Kuelbs1978},
de Acosta \cite{deAcosta1980}, Ledoux--Talagrand \cite{LedouxTalagrand91},
and Pinelis \cite{Pinelis1994} for related results.
We include the proof in order to keep track of the explicit constants
under the square-exponential moment assumption.
For sub-Gaussian random variables in Hilbert spaces and concentration
of their squared norms under trace-class variance proxies, see also
Giuliano Antonini \cite{Antonini1997} and
Mollenhauer--Schillings \cite{MollenhauerSchillings2023}.

\begin{lemma}
\label{lem:abstract-average-holder}
Let $\xi_1,\dots,\xi_m$ be $\mH$-valued random variables.
Assume that, conditionally on $\cG$, they are independent and satisfy
for some deterministic constants $a>0$ and $\Lambda\in(1,\infty)$,
\[
\E[\xi_i\mid \cG]=0,
\qquad
\E\bigl[\exp(a\norm{\xi_i}_{\mH}^{2})\mid \cG\bigr]\le \Lambda,
\qquad i=1,\dots,m.
\]
Then, for every $D>\beta:=8\log \Lambda+2$ and every $m\ge1$,
\begin{equation}\label{eq:abstract-average-holder}
\E\left[
\exp\left(
\frac{a}{Dm}
\left\|
\sum_{i=1}^{m}\xi_i
\right\|_{\mH}^{2}
\right)
\Bigm|\cG
\right]
\le
\exp\left[
\frac{\log \Lambda}{\beta}
\log\frac{D}{D-\beta}
\right]
\qquad\text{a.s.}
\end{equation}
\end{lemma}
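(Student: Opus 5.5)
We prove \eqref{eq:abstract-average-holder} by peeling off the summands one at a time with the one-step bound \eqref{eq:onestep-holder} of Lemma~\ref{lem:linform-improved}. Set $S_k:=\sum_{i=1}^k\xi_i$ and $\cG_k:=\cG\vee\sigma(\xi_1,\dots,\xi_k)$. By conditional independence given $\cG$, the variable $\xi_{k}$ still satisfies, conditionally on $\cG_{k-1}$,
\[
\E[\xi_k\mid\cG_{k-1}]=0,
\qquad
\E[\e^{a\norm{\xi_k}^2_\mH}\mid\cG_{k-1}]\le\Lambda .
\]
We therefore apply Lemma~\ref{lem:linform-improved}(ii) with $\cG$ replaced by $\cG_{k-1}$, with $\eta=S_{k-1}$ and $\xi=\xi_k$. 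This gives, for $0\le\theta<a$,
\[
\E\bigl[\e^{\theta\norm{S_k}^2}\mid\cG_{k-1}\bigr]\le \exp\Bigl\{\tfrac{\log\Lambda}{a}\theta+\phi(\theta)\norm{S_{k-1}}^2\Bigr\},
\qquad
\phi(\theta):=\theta+\frac{\beta\theta^2}{a-\theta},
\]
where $\beta=8\log\Lambda+2$.

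Iterating requires a decreasing chain of exponents. Choose $\theta_m:=\theta$ and $\theta_{k-1}:=\phi(\theta_k)$, so that after $m$ steps
\[
\E\bigl[\e^{\theta_m\norm{S_m}^2}\mid\cG\bigr]\le\exp\Bigl\{\tfrac{\log\Lambda}{a}\sum_{k=1}^m\theta_k\Bigr\},
\]
using $S_0=0$. This is legitimate provided every $\theta_k$ stays below $a$. The step I expect to be the main obstacle is showing that, starting from $\theta_m=a/(Dm)$ with $D>\beta$, all iterates stay in the admissible range, and summing them to the exact constant $\frac{1}{\beta}\log\frac{D}{D-\beta}$.

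To handle this, I would first replace $\phi$ by a simpler majorant valid in the relevant range; monotonicity of the bound in $\eta$-exponent allows using any larger exponent. Write $u_k=\theta_k/a$. When $u\le 1/D<1/\beta$ is small, the recursion is essentially $u_{k-1}\approx u_k+\beta u_k^2$. The natural comparison is the map $u\mapsto u/(1-\beta u)$, which is exactly the map under which $1/u$ decreases by $\beta$ at each step. Concretely, I would try to check that
\[
u+\frac{\beta u^2}{1-u}\le\frac{u}{1-\beta u}
\]
for $u$ in the range reached. This holds because $\frac{\beta u^2}{1-u}\le\frac{\beta u^2}{1-\beta u}$ when $\beta\ge1$, and indeed $\beta\ge2$ since $\Lambda>1$. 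Hence one may take
\[
\frac1{u_{k-1}}=\frac1{u_k}-\beta,
\qquad\text{so that}\qquad
\frac1{u_k}=Dm-\beta(m-k).
\]
This stays $\ge Dm-\beta(m-1)>m(D-\beta)>0$, and with $\beta\ge2$ it also keeps $u_k<1$, so the iteration is well defined.

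Finally,
\[
\sum_{k=1}^m u_k=\sum_{j=0}^{m-1}\frac{1}{Dm-\beta j}\le\int_0^m\frac{\dif s}{Dm-\beta s}=\frac1\beta\log\frac{D}{D-\beta},
\]
where the inequality holds because the summand is increasing in $j$, so each term is bounded by the integral over $[j,j+1]$. Multiplying by $\log\Lambda$ (since $\frac{\log\Lambda}{a}\theta_k=\log\Lambda\,u_k$) gives exactly the claimed bound $\exp\bigl[\frac{\log\Lambda}{\beta}\log\frac{D}{D-\beta}\bigr]$.
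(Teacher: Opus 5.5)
Your proof is correct and follows the paper's argument. You peel off one summand at a time with \eqref{eq:onestep-holder} conditionally on $\sigma(\cG,\xi_1,\dots,\xi_{k-1})$, run the exponents backwards from $\theta_m=a/(Dm)$, and bound $\sum_k\theta_k$ by $\frac a\beta\log\frac{D}{D-\beta}$ through an integral comparison. The only difference is cosmetic: you replace the exact recursion $\theta_{k-1}=\theta_k+\beta\theta_k^2/(a-\theta_k)$ by the larger, explicitly solvable chain $1/\theta_{k-1}=1/\theta_k-\beta/a$. This is legitimate because the bound is monotone in the coefficient of $\|S_{k-1}\|_{\mH}^2$, and the chain reproduces exactly the paper's bound $\theta_k\le a/((D-\beta)m+\beta k)$.
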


\begin{proof}
Let
\[
S_k:=\sum_{i=1}^{k}\xi_i,
\qquad
S_0:=0,
\qquad
\mathcal F_k:=\sigma(\cG,\xi_1,\dots,\xi_k).
\]
By conditional independence, $\xi_k$ satisfies the same conditional
centering and exponential-square estimate with respect to $\mathcal F_{k-1}$.
Applying Lemma~\ref{lem:linform-improved} with
$\eta=S_{k-1}$ and $\xi=\xi_k$, we obtain, for every $0\le\theta_k<a$,
\begin{align}
&\E\bigl[
\exp(\theta_k\norm{S_k}_{\mH}^{2})
\mid \mathcal F_{k-1}
\bigr]\le
\exp\left\{
A\theta_k
+
\left(
\theta_k+
\frac{\beta\theta_k^{2}}{a-\theta_k}
\right)
\norm{S_{k-1}}_{\mH}^{2}
\right\},
\label{eq:holder-step-Sk}
\end{align}
where
\[
A:=\frac{\log \Lambda}{a},
\qquad
\beta:=8\log \Lambda+2.
\]
Define the sequence $\theta_k$ backwards by
\begin{equation}\label{eq:holder-theta-recursion}
\theta_{k-1}
=
\theta_k+
\frac{\beta\theta_k^{2}}{a-\theta_k},
\qquad k=m,m-1,\dots,1,
\end{equation}
with terminal value
\begin{equation}\label{eq:holder-theta-terminal}
\theta_m:=\frac{a}{Dm},
\qquad D>\beta.
\end{equation}
We first check that all $\theta_k<a$. From \eqref{eq:holder-theta-recursion},
\[
\theta_{k-1}
=
\frac{\theta_k\bigl(a+(\beta-1)\theta_k\bigr)}
{a-\theta_k}.
\]
Hence
\begin{align}
\frac1{\theta_{k-1}}
&=
\frac{a-\theta_k}
{\theta_k\bigl(a+(\beta-1)\theta_k\bigr)} =
\frac1{\theta_k}
-
\frac{\beta}{a+(\beta-1)\theta_k}
\ge
\frac1{\theta_k}
-
\frac{\beta}{a}.
\label{eq:theta-reciprocal}
\end{align}
Iterating \eqref{eq:theta-reciprocal} gives
\[
\frac1{\theta_k}
\ge
\frac1{\theta_m}
-
\frac{\beta}{a}(m-k)
=
\frac{Dm}{a}
-
\frac{\beta}{a}(m-k)
=
\frac{(D-\beta)m+\beta k}{a}.
\]
Therefore
\begin{equation}\label{eq:theta-bound-holder}
\theta_k
\le
\frac{a}{(D-\beta)m+\beta k}
<a,
\qquad k=1,\dots,m.
\end{equation}
This validates the use of \eqref{eq:holder-step-Sk} at each step.

Using \eqref{eq:holder-step-Sk} and the definition of $\theta_{k-1}$,
we get
\[
\E\bigl[
\exp(\theta_k\norm{S_k}_{\mH}^{2})
\mid\cG
\bigr]
\le
\exp(A\theta_k)
\E\bigl[
\exp(\theta_{k-1}\norm{S_{k-1}}_{\mH}^{2})
\mid\cG
\bigr].
\]
Iterating from $k=m$ down to $k=1$ yields
\begin{equation}\label{eq:holder-iteration}
\E\bigl[
\exp(\theta_m\norm{S_m}_{\mH}^{2})
\mid\cG
\bigr]
\le
\exp\left(
A\sum_{j=1}^{m}\theta_j
\right).
\end{equation}
Note that by \eqref{eq:theta-bound-holder},
\begin{align}
\sum_{j=1}^{m}\theta_j
&\le
a\sum_{j=1}^{m}
\frac1{(D-\beta)m+\beta j} \le
a\int_{0}^{m}
\frac{dx}{(D-\beta)m+\beta x} =
\frac{a}{\beta}
\log\frac{D}{D-\beta}.
\label{eq:theta-sum-holder}
\end{align}
Combining \eqref{eq:holder-iteration} and \eqref{eq:theta-sum-holder}, and using
$A=\log \Lambda/a$, gives
\[
\E\bigl[
\exp(\theta_m\norm{S_m}_{\mH}^{2})
\mid\cG
\bigr]
\le
\exp\left[
\frac{\log \Lambda}{\beta}
\log\frac{D}{D-\beta}
\right].
\]
Finally, since $\theta_m=a/(Dm)$ and $S_m=\sum_{i=1}^{m}\xi_i$,
we obtain
\[
\E\left[
\exp\left(
\frac{a}{Dm}
\left\|
\sum_{i=1}^{m}\xi_i
\right\|_{\mH}^{2}
\right)
\Bigm|\cG
\right]
\le
\exp\left[
\frac{\log \Lambda}{\beta}
\log\frac{D}{D-\beta}
\right],
\]
which proves \eqref{eq:abstract-average-holder}.
\end{proof}

As a result, we also have the following estimate  about the U-statistics see \cite{deLaPenaMontgomerySmith95}.
\begin{lemma}\label{lem:clt-centered-double-sum}
Let $\mH$ be a separable Hilbert space, let $E$ be a measurable space, and
let $\xi$ be an $E$-valued random variable. For each $m\ge2$, let
$\xi_1,\dots,\xi_m$ be i.i.d. random variables with the same law as
$\xi$. Let $h:E\times E\to \mH$ be a measurable kernel satisfying the two
centering conditions
$$
    \mE h(z,\xi)=0,\qquad
    \mE h(\xi,z')=0
$$
for every frozen pair $z,z'\in E$. Assume that, for some $a>0$ and
$K<\infty$,
$$
    \sup_z\mE\exp\bigl(a\norm{h(z,\xi)}_\mH^2\bigr)
    +
    \sup_{z'}\mE\exp\bigl(a\norm{h(\xi,z')}_\mH^2\bigr)
    \le K .
$$
Then there is a constant $c_0=c_0(a,K)>0$,
independent of $m$, such that, for every $m\ge2$,
\begin{align}\label{eq:clt-centered-double-sum}
    \mE\exp\left(
        \frac{c_0}m
        \norm{
        \sum_{i\ne j}h(\xi_i,\xi_j)
        }_{\mH}
    \right)
    \le 2 .
\end{align}
\end{lemma}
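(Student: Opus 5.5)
The plan is to reduce the estimate \eqref{eq:clt-centered-double-sum} to moment bounds of the form
\[
\mE\Bigl\|\tfrac1m\sum_{i\ne j}h(\xi_i,\xi_j)\Bigr\|_\mH^{q}\le (C_1q)^{q},\qquad q\ge1,
\]
with $C_1=C_1(a,K)$ independent of $m$. Granted this, Taylor expansion gives $\mE\exp(c_0\|S\|/m)\le\sum_q (c_0C_1q)^q/q!\le 2$ for $c_0$ small, using $q^q/q!\le \e^q$. The linear-in-$q$ growth, which corresponds to an exponential of the norm rather than of its square, is what one expects from a degenerate second-order chaos. This is why the statement has $\|\cdot\|_\mH$ and not $\|\cdot\|_\mH^2$.

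\emph{Step 1 (decoupling).} By the decoupling inequality of de la Pe\~na and Montgomery-Smith \cite{deLaPenaMontgomerySmith95}, applied to the convex function $x\mapsto x^q$ of the norm (or directly to the Orlicz functional), I would replace $S=\sum_{i\ne j}h(\xi_i,\xi_j)$ by $\tilde S=\sum_{i\ne j}h(\xi_i,\xi'_j)$. Here $(\xi'_j)$ is an independent copy of $(\xi_i)$, and the replacement costs a universal constant factor $C^q$ in the $q$-th moment.

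\emph{Step 2 (inner sums).} Write $\tilde S=\sum_i Y_i$ with $Y_i:=\sum_{j\ne i}h(\xi_i,\xi'_j)$.
\begin{itemize}
\item Conditionally on $\cG:=\sigma(\xi'_1,\dots,\xi'_m)$, the $Y_i$ are independent. They are centered because $\mE h(\xi,z')=0$ for every frozen $z'$.
\item Conditionally on $\xi_i$, $Y_i$ is a sum of $m-1$ independent variables $h(\xi_i,\xi'_j)$. These are centered because $\mE h(z,\xi)=0$, and they satisfy $\sup_z\mE\exp(a\|h(z,\xi)\|^2)\le K$.
\item Lemma~\ref{lem:abstract-average-holder}, applied with $\cG=\sigma(\xi_i)$ and $\Lambda=\max(K,2)$, then gives $\mE\exp\bigl(\tfrac{a}{Dm}\|Y_i\|^2\bigr)\le C(a,K)$.
\end{itemize}
Consequently $\mE\|Y_i\|^{2p}\le (C_2mp)^p$ for all $p$.

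\emph{Step 3 (outer sum, conditionally on $\cG$).} I would apply a Hilbert-space Rosenthal/Pinelis inequality to the conditionally independent centered $Y_i$:
\[
\mE\bigl[\|\tilde S\|^q\mid\cG\bigr]\le C^q\Bigl(q^{q/2}\Bigl(\sum_i\mE[\|Y_i\|^2\mid\cG]\Bigr)^{q/2}+q^{q}\,\mE\bigl[\max_i\|Y_i\|^q\mid\cG\bigr]\Bigr).
\]
Alternatively, I would rerun the proof of Lemma~\ref{lem:linform-improved} with a $\cG$-measurable random $\Lambda$. Then I take full expectations.
\begin{itemize}
\item For the first term, $\sum_i\mE[\|Y_i\|^2\mid\cG]=\sum_i g_i(\xi')$ with $g_i(\xi')=\mE_{\xi_i}\|Y_i\|^2$. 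Its $q/2$-th moment is bounded by $m^{q/2}\max_i\mE[g_i^{q/2}]\le m^{q/2}(C_2mq)^{q/2}$ by Jensen and Step 2. This gives a contribution of order $(Cq)^q m^q$.
\item For the second term, bounding $\max_i$ by the sum gives $m\,(C_2mq)^{q/2}$, which is at most $(Cq)^q m^q$ once $q\ge2$.
\end{itemize}
Dividing by $m^q$ yields the target moment bound. A little care is needed to get linear $q$-growth: the $q^{q/2}\cdot q^{q/2}$ from the two layers must combine to $q^q$.

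\emph{Main obstacle.} I expect the difficulty to be Step 3, namely passing the conditional square-exponential control of $Y_i$ to the outer sum. The natural conditioning for the inner sums ($\sigma(\xi_i)$) is opposite to that needed for independence of the outer summands ($\sigma(\xi')$), so Lemma~\ref{lem:abstract-average-holder} cannot be applied directly with a deterministic $\Lambda$. I would first try the moment route above, which only needs unconditional moments of $\|Y_i\|$ after averaging. If the Pinelis constants become cumbersome, a fallback is to symmetrize with Rademacher signs in both indices and use the hypercontractivity of Rademacher chaos of order two. That gives the $q^q$ growth directly, conditionally on the $\xi$'s, after which Step 2 handles the resulting random quadratic variation.
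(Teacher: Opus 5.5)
\textbf{Gap in Step 3: the maximum term.} Your Steps 1 and 2 are fine, and your bookkeeping for the first Rosenthal term is correct. The second term, however, breaks the claimed linear growth. In the inequality you display, $\mE[\max_i\|Y_i\|^q\mid\cG]$ carries the prefactor $q^q$, and you dropped it. You showed $m(C_2mq)^{q/2}\le (Cq)^qm^q$, but what must be bounded is $q^q\,m(C_2mq)^{q/2}$. Its contribution to $\mE\|\tilde S/m\|^q$ is
\[
C^q q^q\, m^{1-q}(C_2mq)^{q/2}=C^qC_2^{q/2}\,q^{3q/2}\,m^{1-q/2},
\]
that is, $\|\tilde S/m\|_{L^q}\lesssim q\cdot m^{1/q}\sqrt{q/m}$. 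This is superlinear in $q$ once $q\gg m$ (take $m=2$, $q\to\infty$), so the exponential moment uniform in $m$ does not follow.

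A sharper bound on the maximum does not rescue this. The max term is at least $q\|Y_1\|_{L^q}$, and Step 2 only gives $\|Y_1\|_{L^q}\lesssim\sqrt{mq}$. Your fallback has the same defect if you spend the full order-two hypercontractivity factor $q$ on both sign sequences. What remains is $\|(\sum_{i\ne j}\|h(\xi_i,\xi'_j)\|^2)^{1/2}\|_{L^q}$, which still grows with $q$. Step 2 controls $\|Y_i\|$, not sums of squared norms, so it does not reduce this to $O(m)$.

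\textbf{Two repairs.} Within your route, split into regimes.
For $2\le q\le m$ one has $m^{1/q}\sqrt{q/m}\le \mathrm{e}^{1/\mathrm{e}}$, so both Rosenthal terms are $\lesssim (Cq)^qm^q$.
For $q>m$, drop Rosenthal and use Minkowski: $\|\tilde S\|_{L^q}\le\sum_i\|Y_i\|_{L^q}\lesssim m\sqrt{mq}\le mq$.

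The cleaner repair, which is what the paper does, removes the max term altogether. Conditionally on $\sigma(\xi')$ the $Y_i$ are independent and centered, so you may symmetrize the outer sum with Rademacher signs $\varepsilon_i$. Then apply the Hilbert-space Khintchine--Kahane inequality conditionally on $(\xi,\xi')$. A Rademacher sum with frozen Hilbert coefficients is subgaussian with proxy $\sum_i\|Y_i\|^2$ and has no max term. Minkowski in $L^{q/2}$ together with Step 2 then gives
\[
\|\tilde S\|_{L^q}\le 2\Bigl\|\sum_i\varepsilon_iY_i\Bigr\|_{L^q}\lesssim\sqrt q\Bigl(\sum_i\|Y_i\|_{L^q}^2\Bigr)^{1/2}\lesssim\sqrt q\,\sqrt{m\cdot mq}=qm .
\]
This is exactly where the two factors $\sqrt q$ combine to $q$ with no extra loss. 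The paper also symmetrizes the inner index with signs $\varepsilon'_j$, but only the outer signs are essential.
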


\begin{proof}
Set
$$
    S_m:=\sum_{i\ne j}h(\xi_i,\xi_j).
$$
\noindent\emph{Step 1: Reduction to a decoupled randomized form.}
Let $(\widetilde\xi_j)_{j=1}^m$ be an independent copy of
$(\xi_j)_{j=1}^m$, and let
$(\varepsilon_i)_{i\ge1}$, $(\varepsilon'_j)_{j\ge1}$ be independent
Rademacher sequences, independent of all $\widetilde\xi, \xi$'s. Set
$$
    \widetilde S_m
    :=
    \sum_{i\ne j}
    \varepsilon_i\varepsilon'_j h(\xi_i,\widetilde\xi_j).
$$
Let $r\geq 2$ and $L^r(\mH):=L^r(\Omega;\mH) $. 
We claim that for a universal constant $C$ independent of $r$,
$$
    \norm{S_m}_{L^r(\mH)}
    \le C\norm{\widetilde S_m}_{L^r(\mH)}.
$$
Indeed, define
$$
    D_m:=\sum_{i\ne j}h(\xi_i,\widetilde\xi_j).
$$
By the decoupling inequality of de la Pe{\~n}a and Montgomery-Smith
\cite[Theorem~1]{deLaPenaMontgomerySmith95}, applied with order $2$ and
Banach space $\mH$, there is a universal constant $C_2$ independent of $r,m$ such that
$$
    \norm{S_m}_{L^r(\mH)}\le C_2\norm{D_m}_{L^r(\mH)},
$$
We next symmetrize $D_m$ twice. Conditionally on $(\xi_i)_{i=1}^m$, the
variables
$$
    Y_j:=\sum_{i\ne j}h(\xi_i,\widetilde\xi_j),
    \qquad j=1,\ldots,m,
$$
are independent and centered in $\mH$, since $\mE h(z,\xi)=0$ for every
frozen $z$. Applying the standard Banach-valued symmetrization inequality
\cite[Chapter~6, Lemma~6.3]{LedouxTalagrand91} in this conditional probability
space, and then integrating over $(\xi_i)_{i=1}^m$, yields
$$
    \norm{D_m}_{L^r(\mH)}
    \le
    2\left\|
    \sum_{j=1}^m\varepsilon'_j
    \sum_{i\ne j}h(\xi_i,\widetilde\xi_j)
    \right\|_{L^r(\mH)} .
$$
Conditionally on $(\widetilde\xi_j,\varepsilon'_j)_{j=1}^m$, the variables
$$
    X_i:=\sum_{j\ne i}\varepsilon'_j h(\xi_i,\widetilde\xi_j),
    \qquad i=1,\ldots,m,
$$
are independent and centered, now by the condition $\mE h(\xi,z')=0$ for
every frozen $z'$. Applying the same inequality in this conditional
probability space and then integrating gives
$$
    \norm{D_m}_{L^r(\mH)}
    \le 4\norm{\widetilde S_m}_{L^r(\mH)}.
$$
Combining the decoupling and symmetrization estimates proves the claim.

\smallskip
\noindent\emph{Step 2: Estimate of the decoupled randomized form.}
Let $\emptyset\not=J\subset\{1,\ldots,m\}$. For fixed $z\in E$, set
$$
    Z_J(z):=\sum_{j\in J}\varepsilon'_j h(z,\widetilde\xi_j).
$$
The summands are independent, centered, $\mH$-valued random variables and
satisfy
$$
    \mE\exp\bigl(a\norm{\varepsilon'_j h(z,\widetilde\xi_j)}_\mH^2\bigr)
    =
    \mE\exp\bigl(a\norm{h(z,\xi)}_\mH^2\bigr)
    \le K .
$$
Lemma \ref{lem:abstract-average-holder} therefore gives, uniformly in $z$, for some $c=c(a,K)>0$,
$$
    \mP(\norm{Z_J(z)}_\mH>u)
    \le\mE\exp\left(c\frac{\norm{Z_J(z)}_\mH^2}{|J|}\right)\exp\left(-c\frac{u^2}{|J|}\right)\le
    C\exp\left(-c\frac{u^2}{|J|}\right),\qquad u>0,
$$
Integrating this Gaussian tail
gives, for $r\ge2$,
$$
    \sup_{z\in E}\norm{Z_J(z)}_{L^r(\mH)}
    \le C(a,K)\sqrt{r\,|J|}.
$$
Set
$$
    B_i:=\sum_{j\ne i}\varepsilon'_j h(\xi_i,\widetilde\xi_j),
$$
Since $\xi_i$ is independent of
$(\widetilde\xi_j,\varepsilon'_j)_{j\ne i}$, the preceding estimate with
$J=\{1,\ldots,m\}\setminus\{i\}$ yields
$$
    \norm{B_i}_{L^r(\mH)}
    \le C(a,K)\sqrt{rm},
    \qquad i=1,\ldots,m .
$$
Let
$$
    \mathcal G:=
    \sigma\bigl(
    \xi_1,\ldots,\xi_m,\widetilde\xi_1,\ldots,\widetilde\xi_m,
    \varepsilon'_1,\ldots,\varepsilon'_m
    \bigr).
$$
The vectors $B_i$ are $\mathcal G$-measurable, and
$(\varepsilon_i)_{i=1}^m$ is independent of $\mathcal G$. Hence the
Hilbert-space Khintchine--Kahane inequality gives, conditionally on
$\mathcal G$,
$$
    \mE\left(\left\|
        \sum_{i=1}^m\varepsilon_i B_i
    \right\|_{\mH}^r\Big|\cG\right)
    \le
    C r^{r/2}
    \left(\sum_{i=1}^m\norm{B_i}_\mH^2\right)^{r/2}.
$$
Taking the $L^r$-norm in the remaining variables and using Minkowski's
inequality in $L^{r/2}$,
$$
\begin{aligned}
    \norm{\widetilde S_m}_{L^r(\mH)}
    &\le
    C\sqrt r
    \left\|
        \left(\sum_{i=1}^m\norm{B_i}_\mH^2\right)^{1/2}
    \right\|_{L^r} \le
    C\sqrt r
    \left(\sum_{i=1}^m\norm{B_i}_{L^r(\mH)}^2\right)^{1/2}  \\
    &\le C\sqrt r\,(m\cdot rm)^{1/2}
    \le C rm .
\end{aligned}
$$
Therefore, for some $C_0>0$,
$$
    \norm{S_m}_{L^r(\mH)}
    \le C_0rm,\qquad m,r\ge2 .
$$
Let $X_m:=m^{-1}\norm{S_m}_\mH$. Then for every integer $n\ge2$,
$$
    \mE X_m^n\le (C_0n)^n .
$$
Using Stirling's bound $n!\ge(n/\e)^n$, for $c_0=1/(2\e C_0)$,
$$
\begin{aligned}
    \mE \e^{c_0X_m}
    &\le
    1+c_0\mE X_m
    +\sum_{n=2}^\infty \frac{c_0^n}{n!}\mE X_m^n \le
    1+2c_0C_0
    +\sum_{n=2}^\infty \bigl(c_0\e C_0\bigr)^n \leq 2.
\end{aligned}
$$
Hence,
$$
    \sup_{m\ge2}\mE
    \exp\left(
        \frac{c_0}{m}\norm{S_m}_\mH
    \right)
    \le 2.
$$
This is exactly \eqref{eq:clt-centered-double-sum}.
\end{proof}

\section{Proofs of Theorem \ref{thm:well-posed} and Theorem \ref{thm:main}}\label{sec:well}

This section contains the proofs of our two main results. Both proofs rely on the same three ingredients introduced in the preliminaries: the relative entropy formula of Lemma~\ref{Lem34} (which bounds the path-space entropy between two kinetic SDEs by the squared $L^2$-difference of their drifts), the Krylov estimate of Lemma~\ref{Le27} (which controls functionals of the solution by their Kato norms), and the subgaussian estimate of Lemma~\ref{lem:abstract-average-holder} (which turns conditional square-exponential integrability into uniform exponential moments). The well-posedness proof in the next subsection uses only the first two ingredients in a freezing-and-Picard iteration. The propagation of chaos proof in Section~\ref{sec:PoC} additionally requires the Hilbert-space subgaussian estimate to control the empirical interaction fluctuation without boundedness assumptions on $K$.

First we fix some notation. Let $T>0$ and $d\in\mN$.
We write
$$
\cC^{d}_{T}:=C([0,T];\mR^{2d}).
$$
Let $\pi_t(\omega):=\omega(t)$ be the canonical process on $\cC^{d}_{T}$. 
For a probability measure $\bP\in\cP(\cC^d_T)$, we denote by $\bP_{[0,t]}$ the restriction of $\bP$ to $\cC^d_t$, and by $\mu^\bP_t$
the marginal law of $\bP$ at time $t$, i.e., $\mu^\bP_t:=\bP\circ \pi_t^{-1}.$

\subsection{Proof of Theorem \ref{thm:well-posed}}
The well-posedness is proved by a freezing-and-Picard iteration on a short time interval, followed by concatenation. The core estimate is the relative entropy bound of Lemma~\ref{Lem34}: for two frozen equations with drifts differing by $K*(\mu^1-\mu^2)$, the entropy between their path laws is controlled by the Kato norm of the drift difference. Combined with the Krylov estimate (Lemma~\ref{Le27}), this yields a contraction in total variation on a small time interval whose length depends only on the Kato characteristics of $b$ and $K$, not on the initial law. The Picard iterates therefore form a Cauchy sequence, whose limit is identified as a weak solution. Global existence follows by concatenating the local solutions. Uniqueness is then a direct consequence of the same entropy estimate applied to two candidate solutions.

For a measure flow $\nu=(\nu_t)_{t\ge 0}$, write
\begin{align*}
    B_\nu(t,z):= b(t,z)+(K*\nu_t)(z).
\end{align*}

\noindent{\bf Existence.}
Let $T_0>0$ be a small time, to be determined below.
Let $\bP^0\in\cP(\cC^d_{T_0})$ be such that
\begin{align*}
    \bP^0\big(\pi_t=\pi_0,\ \forall t\in[0,T_0]\big)=1,
    \quad
    \bP^0\circ \pi_0^{-1}=\mu_0.
\end{align*}
Let $\mu^0_t:=\bP^0\circ\pi_t^{-1}.$
Assume that $\bP^n$ and $\mu^n_t:=\bP^n\circ\pi_t^{-1}$ have been constructed. Define $\bP^{n+1}$ as the law of the unique weak solution to the frozen equation
\begin{align*}
\begin{cases}
    \dif X^{n+1}_t=V^{n+1}_t\,\dif t,\quad \cL(Z^{n+1}_0)=\mu_0, \\
    \dif V^{n+1}_t
    =
    B_{\mu^n}(t,Z^{n+1}_t)\,\dif t
    +\sqrt{2}\,\dif W_t.
\end{cases}
\end{align*}
Note that by Assumption \ref{ass:main} and Lemma \ref{lem:Kato_con},
$$
    \cK^{(1)}_\lambda(|K*\mu^n|;\delta)
    \le \cK^{(1)}_\lambda(|K|;\delta),\quad\cK^{(0)}_\lambda(|K*\mu^n|^2;\delta)
    \le \cK^{(0)}_\lambda(|K|^2;\delta)=:\bar \kappa_\lambda(\delta).
$$
Hence,
$$
    \cK^{(0)}_\lambda(|b_1+K*\mu^n|^2;\delta)
    \le 2\cK^{(0)}_\lambda(|b_1|^2;\delta)+2\bar \kappa_\lambda(\delta).
$$
Thus the Krylov, Khasminskii and Girsanov constants used below can be chosen independently of the Picard index $n$.

For $n,m\ge 0$,
by Lemma \ref{Lem34} and Lemma \ref{Le27}, we have for every $t\in[0,T_0]$,
\begin{align*}
    \cH\big(\bP^{n+m+1}_{[0,t]}\mid \bP^{n+1}_{[0,t]}\big)
	    &\leq \frac14\mE\left(\int^t_0\bigl|K*(\mu^{n+m}-\mu^n)(s,Z_s^{n+m+1})\bigr|^2\dif s\right)\\
    &\le
    C_{T_0} \cK^{(0)}_{\lambda_0}
    \Big(
        |K*(\mu^{n+m}-\mu^n)|^2;t
    \Big)\\
    & \le
    C_{T_0}\bar \kappa_{\lambda_0}(t)
    \sup_{0\le r\le t}
    \|\mu^{n+m}_r-\mu^n_r\|_{\var}^2,
\end{align*}
where $C_{T_0}$ is independent of $n,m$ and increasing in the time horizon.
Noting that
\begin{align*}
    \sup_{0\le r\le t}
    \|\mu^{n+m}_r-\mu^n_r\|_{\var}^2
    \le
    \|\bP^{n+m}_{[0,t]}-\bP^n_{[0,t]}\|_{\var}^2,
\end{align*}
by Pinsker's inequality \eqref{Pin0},
\begin{align*}
    \|\bP^{n+m+1}_{[0,t]}-\bP^{n+1}_{[0,t]}\|_{\var}^2
    &\le
    2\cH\big(\bP^{n+m+1}_{[0,t]}\mid \bP^{n+1}_{[0,t]}\big)\le
    C_{T_0}\bar \kappa_{\lambda_0}(t)
    \|\bP^{n+m}_{[0,t]}-\bP^n_{[0,t]}\|_{\var}^2 .
\end{align*}
Since $\lim_{\delta\to 0}\bar\kappa_{\lambda_0}(\delta)=0$, choosing $T_0$ smaller if necessary, we may assume
\begin{align*}
    q:=C_{T_0}\bar \kappa_{\lambda_0}(T_0)<1.
\end{align*}
Taking $t=T_0$ and iterating the above inequality gives
\begin{align*}
    \|\bP^{n+m}_{[0,T_0]}-\bP^n_{[0,T_0]}\|_{\var}
    \le
    q^{n/2}
    \|\bP^m_{[0,T_0]}-\bP^0_{[0,T_0]}\|_{\var}.
\end{align*}
Since the total variation distance between two probability measures is uniformly bounded, we obtain
\begin{align*}
    \lim_{n\to\infty}\sup_{m\ge 1}
    \|\bP^{n+m}_{[0,T_0]}-\bP^n_{[0,T_0]}\|_{\var}
    =0.
\end{align*}
Thus $(\bP^n)_{n\ge 0}$ is a Cauchy sequence in total variation on $\cP(\cC^d_{T_0})$. Hence there exists $\bP\in\cP(\cC^d_{T_0})$ such that
\begin{align*}
    \lim_{n\to\infty}
    \|\bP^n-\bP\|_{\var}=0.
\end{align*}
It remains to identify $\bP$ as the law of a solution to the McKean--Vlasov equation. Let $\bQ$ be the law of the unique weak solution to the frozen equation
$$    
\dif X_t=V_t\,\dif t,\quad \dif V_t=B_\mu(t,Z_t)\dif t+\sqrt{2}\,\dif W_t,
    $$
where $\mu_t:=\bP\circ\pi_t^{-1}$ and $\cL(Z_0)=\mu_0$.
By Lemma \ref{Lem34} and Lemma \ref{Le27}, we have for every $t\in[0,T_0]$,
\begin{align*}
    \cH\big(\bP^{n+1}_{[0,t]}\mid \bQ_{[0,t]}\big)
    &\le
    C_{T_0} \;{\cK^{(0)}_{\lambda_0}}
    \Big(
        |K*(\mu^n-\mu)|^2;t
    \Big) \\
    &\le
    q
    \sup_{0\le r\le t}
    \|\mu^n_r-\mu_r\|_{\var}^2\le
    q
    \|\bP^n-\bP\|_{\var}^2
    \longrightarrow 0 .
\end{align*}
Therefore, by Pinsker's inequality,
\begin{align*}
    \bP^{n+1}_{[0,t]}\longrightarrow \bQ_{[0,t]}
    \quad\text{in total variation}.
\end{align*}
On the other hand, $\bP^{n+1}_{[0,t]}\to \bP_{[0,t]}$ in total variation. Hence
\begin{align*}
    \bQ_{[0,t]}=\bP_{[0,t]},\qquad t\in[0,T_0].
\end{align*}
In particular, $\bP$ is the law of a weak solution to \eqref{kMVSDE} on $[0,T_0]$.

Finally, for a general finite time horizon $T>0$, we repeat the above local construction on consecutive intervals
\begin{align*}
    [0,T_0],\ [T_0,2T_0],\ \ldots,\ [kT_0,(k+1)T_0],
\end{align*}
using at each step the terminal law of the previous interval as the new initial law. Since the choice of $T_0$ depends only on the Kato characteristics of $b_1$ and $K$, on $d$, and on $\kappa_0$, but not on the initial law, the same local argument applies on every interval. By the standard disintegration and concatenation procedure for weak solutions, these local solutions can be glued into a weak solution on $[0,T]$. Since $T>0$ is arbitrary, existence holds globally in time.

\noindent{\bf Uniqueness.}
Let $Z^1,Z^2$ be two weak solutions to \eqref{kMVSDE} with the same initial law $\mu_0$. 
Applying the same entropy estimate as above to the two law flows $\mu^1$ and
$\mu^2$ gives, for $t\le T_0$,
\[
    \|\mu^1_t-\mu^2_t\|_{\var}^2
    \le
    C_{T_0}\bar\kappa_{\lambda_0}(t)
    \sup_{0\le r\le t}\|\mu^1_r-\mu^2_r\|_{\var}^2 .
\]
Choosing $T_0$ so that the prefactor is strictly smaller than one yields
$\mu^1_t=\mu^2_t$ on $[0,T_0]$. Repeating the argument on consecutive
intervals gives
\begin{align*}
    \mu^1_t=\mu^2_t,\qquad t\ge 0.
\end{align*}
Finally, since the two measure flows coincide, both $Z^1$ and $Z^2$ solve the same linear kinetic SDE with drift $B_\mu$. By the weak uniqueness of the linear kinetic SDE with Kato drift, their path laws coincide. Thus uniqueness in law holds for \eqref{kMVSDE}.

\subsection{Proof of Theorem \ref{thm:main}}\label{sec:PoC}

The proof proceeds in three steps. First, Lemma~\ref{lem:short-time-exp} establishes a uniform short-time exponential moment bound for the empirical fluctuation functional $F_{s,t}^N$ defined in \eqref{FstN-def}. This is the technical heart of the proof: using the conditional independence structure and the Hilbert-space subgaussian estimate of Lemma~\ref{lem:abstract-average-holder}, we show that the squared fluctuation of the empirical interaction field satisfies an exponential moment bound with constants independent of $N$. Second, this short-time estimate is combined with the entropy formula of Lemma~\ref{Lem34} to obtain the $k/N$ relative entropy bound on short intervals. Third, the Markov property and the entropy chain rule extend the estimate to arbitrary finite times. The proof of Lemma~\ref{lem:short-time-exp} itself treats the case $N=1$ by the Khasminskii estimate and $N\ge2$ by conditioning on the first particle and applying the Hilbert-space subgaussian machinery.

\bl\label{lem:short-time-exp}
For $0\leq s\leq t\leq T$, let $\bP_{[s,t]}$ be the law of $Z$ on $C([s,t];\mR^{2d})$. Fix $N\in\mN$ and 
for $z^1,\ldots,z^N\in C([s,t];\mR^{2d})$, define a functional
\begin{align}\label{FstN-def}
    F_{s,t}^N(z^1,\cdots,z^N)
    :=
    \frac{1}N
    \int_s^{t}
    \left|
        \sum_{i=1}^N
        \left[
            K(r,z_r^1-z_r^i)
            -
            (K(r,\cdot)*\mu_r)(z_r^1)
        \right]
    \right|^2
    \dif r,
\end{align}
where $\mu_r$ is the law of $Z_r$. 
Under Assumption \ref{ass:main}, 
there exist $\delta_0=\delta_0(T,d,\kappa_0,b,K)>0$ and
$C=C(T,d,\kappa_0,b,K)>0$ such that, for every
$0\leq s\leq t\leq T$ with $t-s\leq\delta_0$ and every $N\ge1$,
\begin{align}\label{eq:uniform-exp-short}
    \mE_{\bP_{[s,t]}^{\otimes N}}
    \exp\left\{
        F_{s,t}^N
    \right\}
    \le C.
\end{align}
Moreover, there is a small constant $c_0>0$ such that for each $N\geq 1$,
\begin{align}\label{Eq1}
    \mE_{\bP_{[0,T]}^{\otimes N}}
    \exp\left\{c_0
        F_{0,T}^N
    \right\}
    \le C.
\end{align}
\el

\begin{proof}
For $N=1$, it follows from Corollary \ref{Kha1}, Assumption \ref{ass:main}, and the convention $K(r,0)=0$.
Below we fix $0\leq s\leq t\leq T$ and $N\ge2$. Let $Z^1,\ldots,Z^N$ be $N$ independent copies of $Z$. 
Then
\begin{align}\label{SW1}
\mE_{\bP_{[s,t]}^{\otimes N}}
    \exp\left\{
        F_{s,t}^N
    \right\}=\mE \exp\left\{
        F_{s,t}^N(Z^1,\cdots, Z^N)
    \right\}
\end{align}
Set
\begin{align*}
    \mH_{s,t}:=L^2([s,t];\mR^d),
    \qquad
    \cG:=\sigma(Z_r^1,\ s\le r\le t).
\end{align*}
For $i=1,\ldots,N$, define the $\mH_{s,t}$-valued random variable
\begin{align}\label{xi-def-PoC}
    \xi_i(r)
    :=
    K(r,Z_r^1-Z_r^i)
    -
    (K(r,\cdot)*\mu_r)(Z_r^1),
    \qquad
    s\le r\le t .
\end{align}
Since $K(r,0)=0$,
\begin{align}\label{xi1-def}
    \xi_1(r)
    =
    -
    (K(r,\cdot)*\mu_r)(Z_r^1).
\end{align}
For $i=2,\ldots,N$, conditionally on $\cG$, the random variables
$\xi_i$, $i=2,\ldots,N$, are independent, and noting that
\begin{align*}
    \mE\left[
        K(r,Z_r^1-Z_r^i)
        \bigm| \cG
    \right]
    =
    \int_{\mR^{2d}}K(r,Z_r^1-z)\mu_r(\dif z)
    =
    (K(r,\cdot)*\mu_r)(Z_r^1),
\end{align*}
we have
\begin{align}\label{xi-centered}
    \mE[\xi_i(r)\mid\cG]=0,
    \qquad i=2,\ldots,N,
\end{align}
which implies, by \eqref{FstN-def} and \eqref{xi-def-PoC}, that
\begin{align}\label{F-split}
    F_{s,t}^N(Z^1,\ldots,Z^N)
    =
    \frac1N
    \left\|
        \sum_{i=1}^N\xi_i
    \right\|_{\mH_{s,t}}^2
    \le
    \frac1{N-1}
    \left\|
        \sum_{i=2}^N\xi_i
    \right\|_{\mH_{s,t}}^2
    +
    \|\xi_1\|_{\mH_{s,t}}^2,
\end{align}
and by \eqref{SW1} and H\"older's inequality,
\begin{align}\label{SW5}
    \mE_{\bP_{[s,t]}^{\otimes N}}
    \exp\left\{
        F_{s,t}^N
    \right\}
    &\le
    \left(
        \mE
        \exp\left\{
            \frac2{N-1}
            \left\|
                \sum_{i=2}^N\xi_i
            \right\|_{\mH_{s,t}}^2
        \right\}
    \right)^{1/2}
    \left(
        \mE
        \exp\left\{
            2\|\xi_1\|_{\mH_{s,t}}^2
        \right\}
    \right)^{1/2}.
\end{align}
We now verify the hypotheses of Lemma \ref{lem:abstract-average-holder}.
Note that for $i\ge2$,
\begin{align*}
    |\xi_i(r)|^2
    \le
    2|K(r,Z_r^1-Z_r^i)|^2+2|(K(r,\cdot)*\mu_r)(Z_r^1)|^2.
\end{align*}
By Jensen's inequality, we have
\begin{align}\label{m-Jensen}
    |(K(r,\cdot)*\mu_r)(Z_r^1)|^2
    \le
    \mE\left[
        |K(r,Z_r^1-Z_r^i)|^2
        \bigm| \cG
    \right].
\end{align}
Consequently, using Jensen's inequality once more, we get
\begin{align}\label{xi-cond-exp}
    \mE\left[
        \exp\left\{
            a\|\xi_i\|_{\mH_{s,t}}^2
        \right\}
        \bigm| \cG
    \right]
    \le
    \left[\mE
    \exp\left\{
        2a\int_s^{t}
        |K(r,Z^1_r-Z^i_r)|^2
        \dif r
    \right\}\bigm|\cG\right]^2,
\end{align}
Now, fix a path $\omega\in C([s,t];\mR^{2d})$ and consider the function on $\mR_+\times\mR^{2d}$
\begin{align*}
    g_\omega(r,z)
    :=|K(r,\omega_r-z)|^2.
\end{align*}
By translation invariance in the Kato norm,
\begin{align}\label{Kato-shift-K}
    \cK_\lambda^{(0)}
    \big(g_\omega;\delta\big)=\cK_\lambda^{(0)}(|K|^2;\delta).
\end{align}
Applying \eqref{Kha1-est} to $g_\omega$ with exponential coefficient $2a$,
we may choose $\delta_0=\delta_0(a,T,b,K)>0$ such that, whenever
$0\leq s\leq t\leq T$ and $t-s\leq\delta_0$,
$$
\mE
    \exp\left\{
        2a\int_s^{t}
        |K(r,\omega_r-Z_r)|^2
        \dif r
    \right\}
=\mE
    \exp\left\{
        2a\int_s^{t}
        g_\omega(r,Z_r)
        \dif r
    \right\}
    \le 2.
$$
Substituting this into \eqref{xi-cond-exp}
we obtain
$$
\mE\left[
        \exp\left\{
            a\|\xi_i\|_{\mH_{s,t}}^2
        \right\}
        \bigm| \cG
    \right]
    \le 4.
$$
Choose $a>2(8\log 4+3)$. 
Applying Lemma \ref{lem:abstract-average-holder} conditionally on $\cG$, with
\begin{align*}
    m=N-1,
    \qquad
    \Lambda=4,
    \qquad
    D=8\log 4+3>\beta=8\log 4+2,
\end{align*}
we obtain
\begin{align}\label{sum-xi-subgaussian}
    \mE\left[
        \exp\left\{
            \frac2{N-1}
            \left\|
                \sum_{i=2}^N\xi_i
            \right\|_{\mH_{s,t}}^2
        \right\}
        \bigm| \cG
    \right]
    \le \exp\left[
\frac{\log \Lambda}{\beta}
\log\frac{D}{D-\beta}
\right].
\end{align}
It remains to control $\xi_1$. By \eqref{xi1-def}, \eqref{m-Jensen}, and Jensen's inequality,
\begin{align}\label{SW4}
    \exp\left\{
        2\|\xi_1\|_{\mH_{s,t}}^2
    \right\}
    &\le
    \exp\left\{
        2\int_s^{t}
        \mE\left[
            |K(r,Z_r^1-Z^2_r)|^2
            \bigm|\cG
        \right]\dif r
    \right\}                                                    \no \\
    &\le
    \mE\left[
        \exp\left\{
            2\int_s^{t}
            |K(r,Z_r^1-Z_r^2)|^2
            \dif r
        \right\}
        \bigm| \cG
    \right]\leq 2.
\end{align}
Substituting \eqref{sum-xi-subgaussian} and \eqref{SW4} into \eqref{SW5}, we obtain the desired short-time estimate \eqref{eq:uniform-exp-short}.

It remains to derive the global bound \eqref{Eq1}. Choose a partition
$0=t_0<t_1<\cdots<t_m=T$ such that $t_\ell-t_{\ell-1}\le\delta_0$.
Since
\[
    F_{0,T}^N=\sum_{\ell=1}^{m}F_{t_{\ell-1},t_\ell}^N,
\]
H\"older's inequality and \eqref{eq:uniform-exp-short} give, for
$c_0\le 1/m$,
\[
    \mE_{\bP_{[0,T]}^{\otimes N}}
    \exp\{c_0F_{0,T}^N\}
    \le
    \prod_{\ell=1}^{m}
    \left(
        \mE_{\bP_{[t_{\ell-1},t_\ell]}^{\otimes N}}
        \exp\{mc_0F_{t_{\ell-1},t_\ell}^N\}
    \right)^{1/m}
    \le C .
\]
This proves \eqref{Eq1}.
\end{proof}

Now we are ready to give the proof.

\begin{proof}[Proof of Theorem \ref{thm:main}]
For the particle system \eqref{IPS}, we write
\begin{align*}
    \bP_{[s,t]}^{N,k}
    &:=
    \cL(Z^{N,1}_{[s,t]},\ldots,Z^{N,k}_{[s,t]}),
    \qquad k=1,\ldots,N,\\
    \mu_t^{N,k}
    &:=
    \cL(Z_t^{N,1},\ldots,Z_t^{N,k}),
    \qquad
    \mu_t=\cL(Z_t).
\end{align*}
Since the initial law is exchangeable and the particle system is weakly unique,
$\bP_{[s,t]}^{N,N}$ is exchangeable for every $0\le s\le t$. By
Lemma \ref{Lem34}, applied to the $N$-particle system and to $N$ independent nonlinear copies, we have
\begin{align}
    \cH\left(
        \bP_{[s,t]}^{N,N}
        \mid
        \bP_{[s,t]}^{\otimes N}
    \right)
    &\le
    \cH\left(
        \mu_s^{N,N}
        \mid
        \mu_s^{\otimes N}
    \right)
    +
    \frac14
    \sum_{i=1}^N
    \mE_{\bP_{[s,t]}^{N,N}}
    \int_s^t
    \left|
        (K(r,\cdot)*(\mu_r^N-\mu_r))(z_r^i)
    \right|^2
    \dif r\no\\
    &=\cH\left(
        \mu_s^{N,N}
        \mid
        \mu_s^{\otimes N}
    \right)
    +
    \frac N4
    \mE_{\bP_{[s,t]}^{N,N}}
    \int_s^t
    \left|
        (K(r,\cdot)*(\mu_r^N-\mu_r))(z_r^1)
    \right|^2
    \dif r,\label{eq:HN-first-bound}
\end{align}
where $\mu_r^N:=\frac1N\sum_{j=1}^N\delta_{z_r^j}$
is the empirical measure of the particle system at time $r$, since $\bP_{[s,t]}^{N,N}$ is exchangeable. 
By \eqref{FstN-def}, we have
\begin{align}\label{Mb1}
    F_{s,t}^N(z^1,\ldots,z^N)
    =
    N
    \int_s^t
    \left|
        (K(r,\cdot)*(\mu_r^N-\mu_r))(z_r^1)
    \right|^2
    \dif r .
\end{align}
By the variational representation of relative entropy \eqref{Var}, applied first to $F_{s,t}^N\wedge M$ and then letting $M\to\infty$,
\begin{align*}
    \mE_{\bP_{[s,t]}^{N,N}}F_{s,t}^N
    \le
    \cH\left(
        \bP_{[s,t]}^{N,N}
        \mid
        \bP_{[s,t]}^{\otimes N}
    \right)
    +
    \log
    \mE_{\bP_{[s,t]}^{\otimes N}}
    \exp(F_{s,t}^N).
\end{align*}
Combining this with \eqref{eq:HN-first-bound} and \eqref{Mb1}, we obtain
$$
    \cH\left(
        \bP_{[s,t]}^{N,N}
        \mid
        \bP_{[s,t]}^{\otimes N}
    \right)
    \le
    \cH\left(
        \mu_s^{N,N}
        \mid
        \mu_s^{\otimes N}
    \right)
    +
    \frac14\cH\left(
        \bP_{[s,t]}^{N,N}
        \mid
        \bP_{[s,t]}^{\otimes N}
    \right)
    +
    \frac14
    \log
    \mE_{\bP_{[s,t]}^{\otimes N}}
    \exp(F_{s,t}^N),
$$
and therefore
$$
    \cH\left(
        \bP_{[s,t]}^{N,N}
        \mid
        \bP_{[s,t]}^{\otimes N}
    \right)
    \le
    \frac43\cH\left(
        \mu_s^{N,N}
        \mid
        \mu_s^{\otimes N}
    \right)
    +
    \frac13
    \log
    \mE_{\bP_{[s,t]}^{\otimes N}}
    \exp(F_{s,t}^N).
$$
By Lemma \ref{lem:short-time-exp}, there is a $\delta_0>0$ small enough so that for $t-s\leq\delta_0$,
\begin{align}\label{eq:short-time-entropy}
\cH\left(
        \bP_{[s,t]}^{N,N}
        \mid
        \bP_{[s,t]}^{\otimes N}
    \right)
    \le
    \frac43\cH\left(
        \mu_s^{N,N}
        \mid
        \mu_s^{\otimes N}
    \right)
	    +\frac13 \log C .
\end{align}
In particular, 
$$
\cH\left(
        \bP_{[0,\delta_0]}^{N,N}
        \mid
        \bP_{[0,\delta_0]}^{\otimes N}
    \right)
    \le
    \frac43\cH\left(
        \mu_0^{N,N}
        \mid
        \mu_0^{\otimes N}
    \right)
	    +\frac13 \log C .
$$
By Corollary \ref{Cor217}, we can extend this to arbitrary finite times. Indeed, by \eqref{ent:increment},
\begin{align*}
\cH\left(
        \bP_{[0,t]}^{N,N}
        \mid
        \bP_{[0,t]}^{\otimes N}
    \right)
  -    \cH\left(
        \bP_{[0,s]}^{N,N}
        \mid
        \bP_{[0,s]}^{\otimes N}
    \right)
=
    \cH\left(
        \bP_{[s,t]}^{N,N}
        \mid
        \bP_{[s,t]}^{\otimes N}
    \right)-
    \cH\left(
        \mu_s^{N,N}
        \mid
        \mu_s^{\otimes N}
    \right)
\end{align*}
which, together with \eqref{eq:short-time-entropy} and the data processing inequality, yields
\begin{align*}
    \cH\left(
        \bP_{[0,2\delta_0]}^{N,N}
        \mid
        \bP_{[0,2\delta_0]}^{\otimes N}
    \right)
	  \le \frac43 \cH\left(
        \bP_{[0,\delta_0]}^{N,N}
        \mid
        \bP_{[0,\delta_0]}^{\otimes N}
	    \right)+\frac13 \log C.
\end{align*}
Let $m=\lceil t/\delta_0\rceil$ and divide $[0,t]$ into $m$ subintervals of
length at most $\delta_0$. Iterating the preceding estimate gives
\begin{align*}
    \cH\left(
        \bP_{[0,t]}^{N,N}
        \mid
        \bP_{[0,t]}^{\otimes N}
    \right)
    \le
    C_t\left(
        \cH(\mu_0^{N,N}\mid\mu_0^{\otimes N})+1
    \right),
\end{align*}
where $C_t$ is independent of $N$ and of the initial exchangeable law.
Finally, applying the additive entropy inequality \eqref{BB4} on the path
space $C([0,t];\mR^{2d})$ gives, for every $1\le k\le N$,
\begin{align*}
    \cH(\bP_{[0,t]}^{N,k}\mid \bP_{[0,t]}^{\otimes k})
    \le
    \frac{2k}{N}
    \cH(\bP_{[0,t]}^{N,N}\mid\bP_{[0,t]}^{\otimes N})
    \le
    \frac{C_t k}{N}
    \left(
        \cH(\mu_0^{N,N}\mid\mu_0^{\otimes N})+1
    \right),
\end{align*}
which is \eqref{PoC:main}.
\end{proof}

\section{Central Limit Theorem for Fluctuations}\label{sec:fluctuation-consequence}

In this section we prove the central limit theorem for the empirical fluctuation field $\eta_t^N=\sqrt N(\mu_t^N-\mu_t)$. Unlike the propagation of chaos estimate of Theorem~\ref{thm:main}, 
the CLT requires second-order control: we must show that the scaled empirical measure converges to a Gaussian process in suitable negative Besov spaces. The proof first establishes the regularity of the limiting law, then derives uniform estimates for $\eta_t^N$ and the nonlinear interaction term by transferring exponential moment bounds from the independent particle system to the interacting system. The final step combines tightness, identification of limit points through the linearized equation, the stable martingale CLT, and uniqueness of the limiting martingale problem.
After establishing the CLT, we further obtain quantitative Berry--Esseen bounds for finite-dimensional projections of the fluctuation field.

Throughout this section, we consider the following nonlinear SDE
\begin{align}\label{SDE10}
\dif X_t=V_t\dif t,\ \ \dif V_t=(K*\mu_t)(Z_t)\dif t+\sqrt{2}\dif W_t,\ \ Z_0\sim\mu_0\in\cP(\mR^{2d}),
\end{align}
where $K:\mR^{2d}\to\mR^d$ is a time-independent singular interaction kernel satisfying
\begin{assumption}\label{ass:clt}
For some $\beta\in(1,2)$, it holds that
\begin{align}\label{KK0}
K\in \mK_\beta\cap \bB^{\beta-2,\infty}_{2;a},\ \ |K|^2\in \mK_0,\ \ \mu_0\in\bB^{2d+1,\infty}_{1;a}\cap \bB^{2d+1,\infty}_{2;a}.
\end{align}
\end{assumption}
By It\^o's formula, the law $\mu_t$ of $Z_t$ solves the following nonlinear Fokker-Planck equation:
\begin{align}\label{eq:clt-law-fpe}
    \partial_t\mu_t
    =
    \Delta_v\mu_t
    -
    v\cdot\nabla_x\mu_t
    -
    \div_v((K*\mu_t)\mu_t).
\end{align}
We first establish the following regularity estimate about $\mu_t$.
\begin{theorem}
\label{lem:mu-besov-apriori}
Under Assumption \ref{ass:clt}, $\mu_t$ has a density, still
denoted by $\mu_t$, and for every $T>0$ and $\gamma>2d+1$, there is a constant $C>0$ such that
for all $t\in(0,T]$,
\begin{align}\label{eq:mu-besov-smoothing}
\|\mu_t\|_{\bB^{\gamma,\infty}_{1;a}}+\|\mu_t\|_{\bB^{\gamma,\infty}_{2;a}}\lesssim_C t^{-\frac{\gamma-2d-1}{2}}.
\end{align}
\end{theorem}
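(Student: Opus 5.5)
Write the nonlinear Fokker--Planck equation \eqref{eq:clt-law-fpe} in Duhamel form with respect to the kinetic semigroup. Let $P_t^*$ be the adjoint semigroup (convolution with the Kolmogorov kernel along the transport). Then
\begin{align*}
\mu_t=P^*_t\mu_0-\int_0^t P^*_{t-s}\div_v\big((K*\mu_s)\mu_s\big)\dif s .
\end{align*}
This representation holds for the law of the unique weak solution from Theorem~\ref{thm:well-posed} by It\^o's formula tested against $P_{t-s}\varphi$. The plan is to bootstrap regularity from this mild formulation in the two scales $p=1,2$ simultaneously.

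\emph{Step 1: a priori bounds at the level $\gamma_0:=2d+1$.} Lemma~\ref{lem:kinetic-besov-semigroup} holds equally for $P_t^*$, since the kernel is the same up to reflection. Hence $\|P_t^*\mu_0\|_{\bB^{\gamma,\infty}_{p;a}}\lesssim t^{-(\gamma-2d-1)/2}\|\mu_0\|_{\bB^{2d+1,\infty}_{p;a}}$ for $\gamma>2d+1$, and it is bounded at $\gamma=2d+1$. For the drift term, $K\in\mK_\beta$ gives $K\in\bC^{\beta-2}_a$ by Lemma~\ref{lem:kato-besov-embedding}(i). Since $\mu_s$ is a probability measure, \eqref{Besov-con} with \eqref{measure-emb} ($p=1$) and Bernstein show that $K*\mu_s$ lies in $\bC^{\beta-2}_a$ uniformly in $s$. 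A sharper bound uses the regularity of $\mu_s$ itself: $\|K*\mu_s\|_{\bC^{\beta-2+\theta}_a}\lesssim \|\mu_s\|_{\bB^{\theta,\infty}_{1;a}}$. The product with $\mu_s$ is handled by \eqref{product} or \eqref{product-negative}, and $\div_v$ costs one derivative in the anisotropic scaling. The semigroup then recovers $2$ minus that loss, with time singularity $(t-s)^{-(\gamma-\gamma'+1)/2}$, which is integrable since $\beta>1$ and $1+(2-\beta)<2$.

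Define $A_p(t):=\sup_{s\le t}s^{(\gamma-2d-1)/2}\|\mu_s\|_{\bB^{\gamma,\infty}_{p;a}}$ for a fixed $\gamma\in(2d+1,2d+\beta)$. Treat $p=1$ and $p=2$ in parallel; the $p=2$ estimate uses $\|K*\mu_s\|_{L^\infty}$-type control coming from the $p=1$ norm of $\mu_s$ at positive regularity. This gives the closed inequality
\begin{align*}
A_p(t)\lesssim \|\mu_0\|_{\bB^{2d+1,\infty}_{p;a}}+t^{\epsilon}\big(A_1(t)+1\big)A_p(t),\qquad \epsilon>0,
\end{align*}
for small $t$. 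It closes for $t\le T_1$, with $T_1$ depending only on $\|\mu_0\|$ and $K$. Because a bound at level $2d+1$ propagates (take $\gamma=2d+1$ in the same argument), the estimate iterates over $[0,T]$.

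\emph{Step 2: higher $\gamma$.} Once $\mu_s$ has regularity above $2d+1$, $K*\mu_s$ gains regularity through \eqref{Besov-con}. Repeating the Duhamel estimate, with the integral split at $t/2$, raises $\gamma$ step by step by less than $\beta-1$ each time, keeping the weight $t^{-(\gamma-2d-1)/2}$. Each step uses the previous bound on $[t/2,t]$ and the initial-level bound on $[0,t/2]$.

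\emph{Main obstacles.} To make the argument rigorous, I would first prove the estimates for mollified $K_\epsilon$ and $\mu_0^\epsilon$, where the solution is smooth. I would then pass to the limit using the entropy stability from the proof of Theorem~\ref{thm:well-posed}, which gives convergence of the laws in total variation, together with lower semicontinuity of Besov norms. The hardest step is closing the product estimate for $(K*\mu_s)\mu_s$ in $\bB^{\gamma-1,\infty}_{p;a}$ when $K*\mu_s$ has only negative or low positive regularity. I would first try \eqref{product} directly, using $\gamma>2d+1$ and the embedding $\bB^{\gamma,\infty}_{1;a}\subset \bC^{\gamma-4d}_a$ to keep $K*\mu_s$ with positive regularity. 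If that fails, I would fall back on the paraproduct bound \eqref{product-negative}.
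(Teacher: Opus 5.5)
Your local estimate is sound, but the step from $[0,T_1]$ to $[0,T]$ has a genuine gap. For $p=1$, your closed inequality $A_1(t)\lesssim\|\mu_0\|_{\bB^{2d+1,\infty}_{1;a}}+t^{\epsilon}(A_1(t)+1)A_1(t)$ is quadratic in the quantity being estimated. The same holds for the level-$(2d+1)$ estimate you use to restart: both $\|K*\mu_s\|_{L^\infty}\lesssim\|\mu_s\|_{\bB^{2-\beta,1}_{1;a}}$ and $\|K*\mu_s\|_{\bB^{\gamma',\infty}_{\infty;a}}\lesssim\|\mu_s\|_{\bB^{\gamma'+2-\beta,1}_{1;a}}$ are controlled only by the norm you are bounding. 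Such an inequality closes only up to a time that shrinks as the data grow. At the restart time it gives only something like $\|\mu_{T_1}\|_{\bB^{2d+1,\infty}_{1;a}}\le 2C\|\mu_0\|_{\bB^{2d+1,\infty}_{1;a}}$. Iterating, the norm may grow geometrically while the admissible step lengths form a convergent series, so your argument does not rule out blow-up before a prescribed $T$. ``The bound at level $2d+1$ propagates'' is exactly what needs proof. What is missing is a \emph{global-in-time} a priori bound on a lower-order quantity that makes the top-order Duhamel inequality linear.

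The paper gets this from the external smoothing theorem \cite[Theorems 1.3 and 3.13]{HRZ}, i.e.\ \eqref{0622:00}. That result holds on all of $(0,T]$ for initial data only in $\bB^{\beta_0,\infty}_{1;a}$, $\beta_0\in(1-\beta,0)$, and it is also where the density comes from. Inserting it into Duhamel gives $\sup_{t\le T}(\|\mu_t\|_{L^p}+\|\mu_t\|_{\bB^{2-\beta,1}_{p;a}})<\infty$. Then \eqref{0622:03} is a linear Volterra inequality with kernel $(t-r)^{-(3-\beta)/2}$ and closes on the whole interval.

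To stay self-contained, you can produce the missing bound from mass conservation, $\|\mu_s\|_{\bB^{0,\infty}_{1;a}}\lesssim\|\mu_s\|_{\var}=1$, using that $\beta>1$ makes the nonlinearity $L^1$-subcritical. Fix $\theta'\in(2-\beta,2d+1]$ and $\delta\in(0,\beta-1)$ small enough that $\theta:=\theta'-(2-\beta)-\delta>0$. Then $\|(K*\mu)\mu\|_{\bB^{\theta,\infty}_{1;a}}\lesssim\|\mu\|_{\bB^{\theta'-\delta,1}_{1;a}}+\|\mu\|_{\bB^{2-\beta,1}_{1;a}}\|\mu\|_{\bB^{\theta,\infty}_{1;a}}$. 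Interpolating every factor between $\bB^{0,\infty}_{1;a}$ and $\bB^{\theta',\infty}_{1;a}$ bounds the right-hand side by $C(1+\|\mu\|_{\bB^{\theta',\infty}_{1;a}})^{(\theta'-\delta)/\theta'}$, a sublinear power. With the integrable kernel $(t-s)^{-(3-\beta+\delta)/2}$, this gives $\sup_{t\le T}\|\mu_t\|_{\bB^{\theta',\infty}_{1;a}}<\infty$; your mollification supplies the a priori finiteness. With this bound, $\|K*\mu_t\|_{L^\infty}$ is bounded on $[0,T]$. All remaining estimates, in both the $p=1$ and $p=2$ scales and in your Step 2, are then linear in the top norm, and the argument goes through on $[0,T]$.
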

\begin{proof}
First, Lemma \ref{lem:kato-besov-embedding} gives
\begin{align}\label{Hg1}
K \in \mK_\beta \subset \bB^{\beta-2,\infty}_{\infty;a}.
\end{align}
Let $\beta_0 \in (1-\beta, 0)$. Since $\mu_0 \in \bB^{0,\infty}_{1;a}\subset\bB^{\beta_0,\infty}_{1;a}$, 
by \cite[Theorem 1.3 and Theorem 3.13]{HRZ}, for each $\gamma\geq 0$, 
there exists a constant $C>0$ such that for all $t\in(0,T]$,
\begin{align}\label{0622:00}
\|\mu_t\|_{\bB^{\gamma,1}_{1;a}}+\|\mu_t\|_{\bB^{\gamma,1}_{2;a}} \lesssim t^{-\frac{\gamma-\beta_0}{2}}.
\end{align}
We now apply \eqref{0622:00} to improve the singularity \eqref{eq:mu-besov-smoothing} 
around the time zero when the initial value is regular. 
Let $s\in[0,T)$ and $t \in [0, T-s]$. By Duhamel's formula, we have
$$
\mu_{t+s} = P^*_t \mu_s - \int_0^t P^*_{t-r} \div_v \big((K * \mu_{s+r}) \mu_{s+r}\big) \, \dif r.
$$
Let $0 \le \gamma_1<\gamma$.
By Lemma \ref{lem:kinetic-besov-semigroup}, we have for any $p \in [1,2]$,
\begin{align}\label{Hg2}
\|\mu_{t+s}\|_{\bB^{\gamma,1}_{p;a}} \lesssim \|P_t\mu_s\|_{\bB^{\gamma,1}_{p;a}} + \int_0^t (t-r)^{-\frac{1+\gamma-\gamma_1}{2}} \|(K * \mu_{s+r}) \mu_{s+r}\|_{\bB^{\gamma_1,\infty}_{p;a}} \, \dif r.
\end{align}
Let $p'=p/(p-1)\in[2,\infty]$. By \eqref{Hg1} and Assumption \ref{ass:clt}, we have
$$
\|K\|_{\bB^{\beta-2,\infty}_{p';a}}\leq\|K\|^{2/p'}_{\bB^{\beta-2,\infty}_{2;a}}\|K\|^{1-2/p'}_{\bB^{\beta-2,\infty}_{\infty;a}}<\infty.
$$
Thus, using \eqref{Besov-con} and \eqref{product}, we obtain
$$
\begin{aligned}
\|(K * \mu_{s+r}) \mu_{s+r}\|_{\bB^{\gamma_1,\infty}_{p;a}}
&\lesssim \|K * \mu_{s+r}\|_{\bB^{\gamma_1,\infty}_{\infty;a}} \|\mu_{s+r}\|_{L^p} + \|K * \mu_{s+r}\|_{L^\infty} \|\mu_{s+r}\|_{\bB^{\gamma_1,\infty}_{p;a}} \\
&\lesssim \|K\|_{\bB^{\beta-2,\infty}_{p';a}} \Big( \|\mu_{s+r}\|_{\bB^{2-\beta+\gamma_1,1}_{p;a}} \|\mu_{s+r}\|_{L^p}
+ \|\mu_{s+r}\|_{\bB^{2-\beta,1}_{p;a}} \|\mu_{s+r}\|_{\bB^{\gamma_1,\infty}_{p;a}} \Big)\\
&\lesssim \|\mu_{s+r}\|_{\bB^{2-\beta+\gamma_1,1}_{p;a}} \Big(\|\mu_{s+r}\|_{L^p}
+ \|\mu_{s+r}\|_{\bB^{2-\beta,1}_{p;a}}\wedge\|\mu_{s+r}\|_{\bB^{\gamma_1,\infty}_{p;a}} \Big),
\end{aligned}
$$
Substituting this into \eqref{Hg2} gives
\begin{align}\label{0622:02}
\begin{aligned}
\|\mu_{t+s}\|_{\bB^{\gamma,1}_{p;a}}
\lesssim \|P_t\mu_s\|_{\bB^{\gamma,1}_{p;a}} 
 &+ \int_0^t (t-r)^{-\frac{1+\gamma-\gamma_1}{2}} 
 \|\mu_{s+r}\|_{\bB^{2-\beta+\gamma_1,1}_{p;a}}\\
&\times \Big(\|\mu_{s+r}\|_{L^p}
+ \|\mu_{s+r}\|_{\bB^{2-\beta,1}_{p;a}}\wedge\|\mu_{s+r}\|_{\bB^{\gamma_1,\infty}_{p;a}} \Big)\dif r.
\end{aligned}
\end{align}
Choose $\beta_0\in((1-\beta)/2,0)$ and
$\gamma_0\in(0,\beta+2\beta_0-1)$. Letting $s=\gamma_1=0$ and
$\gamma=\gamma_0$ in \eqref{0622:02}, by Lemma
\ref{lem:kinetic-besov-semigroup} and \eqref{0622:00}, we get
\begin{align*}
\|\mu_t\|_{L^p}\lesssim\|\mu_t\|_{\bB^{\gamma_0,1}_{p;a}} 
&\lesssim \|\mu_0\|_{\bB^{\gamma_0,1}_{p;a}} + \int_0^t (t-r)^{-\frac{1+\gamma_0}{2}} 
\|\mu_r\|_{\bB^{2-\beta,1}_{p;a}} \|\mu_r\|_{L^p}  \dif r\\
&\lesssim \|\mu_0\|_{\bB^{\gamma_0,1}_{p;a}}
+\int_0^t(t-r)^{-\frac{1+\gamma_0}{2}}
r^{-\frac{2-\beta-\beta_0}{2}}r^{\frac{\beta_0}{2}}\,\dif r\lesssim 1.
\end{align*}
Next, choosing $s=\gamma_1=0$ and $\gamma = 2-\beta$  in \eqref{0622:02}  and using the above estimate give
$$
\|\mu_t\|_{\bB^{2-\beta,1}_{p;a}} 
\lesssim \|\mu_0\|_{\bB^{2-\beta,1}_{p;a}} 
+ \int_0^t (t-r)^{-\frac{3-\beta}{2}} \|\mu_r\|_{\bB^{2-\beta,1}_{p;a}} \, \dif r,
$$
which, by Gr\"onwall's inequality of Volterra type, implies
$$
\|\mu_t\|_{\bB^{2-\beta,1}_{p;a}}
\lesssim \|\mu_0\|_{\bB^{2,\infty}_{p;a}} \lesssim 1.
$$
Substituting this into \eqref{0622:02}, we obtain, for any $s>0$ and $\gamma>2-\beta$,
\begin{align}\label{0622:03}
\|\mu_{t+s}\|_{\bB^{\gamma,1}_{p;a}}
\lesssim \|P_t\mu_s\|_{\bB^{\gamma,1}_{p;a}} + \int_0^t (t-r)^{-\frac{3-\beta}{2}} \|\mu_{s+r}\|_{\bB^{\gamma,1}_{p;a}} \, \dif r.
\end{align}
By Gr\"onwall's inequality of Volterra type again, 
there is a constant $C_T>0$ such that for all $0<s\leq t\leq T$,
$$
\|\mu_{t+s}\|_{\bB^{\gamma,1}_{p;a}}\lesssim_{C_T} \|P_t\mu_s\|_{\bB^{\gamma,1}_{p;a}}
+\int^t_0(t-r)^{-\frac{3-\beta}{2}}\|P_r\mu_s\|_{\bB^{\gamma,1}_{p;a}}\dif r.
$$
From this and by Lemma \ref{lem:kinetic-besov-semigroup}, we derive the desired estimates.
\end{proof}

Now consider the particle system associated with SDE \eqref{SDE10}:
$$
 \dif X_t^{N,i}=V_t^{N,i}\dif t,\qquad
        \dif V_t^{N,i}=(K*\mu^N_t)(Z_t^{N,i})\dif t+\sqrt{2}\dif W_t^i,\ \ i=1,\cdots, N,
$$
where $Z^{N,i}_t:=(X^{N,i}_t,V^{N,i}_t)$ and 
$$
Z^N_0:=(Z^{N,1}_0,\cdots,Z^{N,N}_0)\sim \mu_0^{N,N}\in\cP((\mR^{2d})^N).
$$
Let $\mu_t^N:=\frac1N\sum_{i=1}^N\delta_{Z_t^{N,i}}$ be the empirical measure.
Consider the measure-valued process
$$
    \eta_t^N:=\sqrt N(\mu_t^N-\mu_t).
$$
The aim of this section is to show that $\eta_t^N$ weakly converges to an infinite-dimensional Ornstein--Uhlenbeck process $\eta_t$, where $\eta_t$ formally solves
the following linear SPDE:
\begin{align}\label{eq:linearized-clt-formal}
    \partial_t\eta_t
    =\Delta_v\eta_t
    -v\cdot\nabla_x\eta_t
    -\div_v(\sA_{\mu_t}\eta_t)
    -\sqrt{2}\,\div_v(\sqrt{\mu_t}\,\xi),
\end{align}
where $\sA_\mu\eta$ is defined by \eqref{Def1} and
$\xi$ is an $\mR^d$-valued white noise on $\mR_+\times\mR^{2d}$ with independent components. 

Indeed, let $\varphi\in C^1(\mR_+; C^\infty_c(\mR^{2d}))$.
Applying It\^o's formula to $\langle\eta_t^N,\varphi_t\rangle$ and subtracting
the deterministic equation for $\mu_t$ gives
\begin{align}\label{eq:clt-finite-equation}
\begin{split}
    \langle\eta_t^N,\varphi_t\rangle
    &=
    \langle\eta_0^N,\varphi_0\rangle
    +
    \int_0^t\langle\eta_s^N,(\p_s+\Delta_v+v\cdot\nabla_x)\varphi_s\rangle\,\dif s\\
    &
    +\sqrt N\int_0^t
    \left\langle
        \nabla_v\varphi_s,(K*\mu_s^N)\mu^N_s-(K*\mu_s)\mu_s\right\rangle\,\dif s
+    M_t^{N,\varphi},
 \end{split}
\end{align}
where
\begin{align}\label{eq:clt-finite-mart}
    M_t^{N,\varphi}
    :=
    \frac{\sqrt{2}}{\sqrt N}\sum_{i=1}^N
    \int_0^t\nabla_v\varphi_s(Z_s^{N,i})\cdot\dif W_s^i.
\end{align}
Intuitively, the martingale law of large numbers suggests that, as $N\to\infty$,
$$
M_t^{N,\varphi}\to \sqrt{2}\int^t_0\int_{\mR^{2d}}\sqrt{\mu_s(z)}\<\nabla_v\varphi_s(z)\cdot\xi(\dif s,\dif z)\>.
$$
Moreover, with the notation \eqref{Def1}, we formally also have
$$
\sqrt N\int_0^t
    \left\langle
        \nabla_v\varphi_s,(K*\mu_s^N)\mu^N_s-(K*\mu_s)\mu_s\right\rangle\,\dif s
\rightarrow \int_0^t
    \left\langle
        \nabla_v\varphi_s,\sA_{\mu_s}\eta_s\right\rangle\,\dif s.
$$

We now give the precise definition of a solution to SPDE \eqref{eq:linearized-clt-formal}.

\begin{definition}[Martingale solution]\label{def:clt-mart-sol}
Let $T>0$. A process $\eta$ is called a
martingale solution of SPDE \eqref{eq:linearized-clt-formal} on $[0,T]$ if
\begin{align}\label{Re1}
    \eta\in \bigcap_{\alpha<-2d}L^\infty([0,T]; L^2(\Omega;\bB^{\alpha,2}_{2;a})).
\end{align}
Moreover, for every $\varphi\in C^1([0,T]; \bB^{2d+2,2}_{2;a})$ and $t\in[0,T]$,
\begin{align}\label{eq:linearized-clt-weak}
\begin{split}
    \langle\eta_t,\varphi_t\rangle
    &=
    \langle\eta_0,\varphi_0\rangle
    +
    \int_0^t
        \langle\eta_s,(\p_s+\Delta_v+v\cdot\nabla_x+\sA^*_{\mu_s}\cdot\nabla_v)\varphi_s\rangle\,\dif s
    +M_t^\varphi,
\end{split}
\end{align}
where $M^\varphi$ is a centered continuous Gaussian martingale. The Gaussian martingale field is independent of $\eta_0$ and satisfies, for all admissible test functions $\varphi,\psi$,
\begin{align}\label{eq:clt-noise-covariance}
    \mE\big[M_t^\varphi M_s^\psi\big]
    =
    2\int_0^{t\wedge s}
        \langle\mu_r,\nabla_v\varphi_r\cdot\nabla_v\psi_r\rangle\,\dif r.
\end{align}
\end{definition}

\br\rm
By Assumption \ref{ass:clt}, Lemma \ref{Le10}, and
\eqref{eq:mu-besov-smoothing},
there is a $\delta\in(0,1)$ such that
$$
\|\sA_{\mu_s}\eta_s\|_{\bB^{\alpha+\beta-2,2}_{2;a}}\lesssim s^{-\delta},\ \ s\in(0,T].
$$
Thus, for any $\varphi\in C^1([0,T]; \bB^{2d+2,2}_{2;a})$,
$$
\int_0^t|\langle \eta_s,\sA^*_{\mu_s}\cdot\nabla_v\varphi_s\rangle|\,\dif s
=\int_0^t|\langle \sA_{\mu_s}\eta_s,\nabla_v\varphi_s\rangle|\,\dif s
<\infty,\ \ a.s.
$$  
\er

The aim of this section is to show the following central limit theorem.

\begin{theorem}[Central limit theorem]\label{thm:clt}
Let $\mu_0^{N,N}=\mu_0^{\otimes N}$ and $T>0$. Under Assumption \ref{ass:clt}, for every
$\ell<0$ and $\alpha<-2d$, the fluctuation fields
\[
    \eta^N_t=\sqrt N(\mu_t^N-\mu_t)
\]
converge in distribution in $C([0,T];\bB^{\alpha-3,2}_{2;a,\ell-1}(\mR^{2d}))$
to the unique martingale solution $\eta$ of
\eqref{eq:linearized-clt-formal} in the sense of Definition~\ref{def:clt-mart-sol},
where the initial centered Gaussian field $\eta_0$ has variance
\begin{align}\label{initial:cor}
       \mE\bigl|\langle\eta_0,\varphi\rangle|^2
    =
    \langle\mu_0,\varphi^2\rangle
    -|\langle\mu_0,\varphi\rangle|^2,\quad \varphi\in C^\infty_c(\mR^{2d}).
\end{align}
\end{theorem}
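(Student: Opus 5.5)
The proof will follow the classical tightness, identification and uniqueness scheme, with the path-space entropy bound of Theorem~\ref{thm:main} and the subgaussian estimates of Section~\ref{sec:subgaussian} supplying the uniform moment control that boundedness of $K$ would normally give.

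\emph{Step 1: uniform bounds.} Since $\mu_0^{N,N}=\mu_0^{\otimes N}$, Theorem~\ref{thm:main} with $k=N$ gives $\cH(\bP^{N,N}_{[0,T]}|\bP^{\otimes N}_{[0,T]})\le C_T$ uniformly in $N$. By the variational formula \eqref{Var}, any functional $G_N$ of the paths then satisfies $\mE^{N}G_N\le C_T+\log\mE_{\bP^{\otimes N}}\e^{G_N}$. Under $\bP^{\otimes N}$, the quantity $\|\sqrt N(\mu^{N}_t-\mu_t)\|^2_{\bB^{\alpha,2}_{2;a}}$ is a normalized squared norm of a sum of i.i.d.\ centered Hilbert-valued variables $\delta_{Z^i_t}-\mu_t$. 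For $\alpha<-2d$ these are bounded in $\bB^{\alpha,2}_{2;a}$ by \eqref{measure-emb}. Lemma~\ref{lem:abstract-average-holder} therefore gives exponential moments, and hence $\sup_N\sup_{t\le T}\mE\|\eta^N_t\|^2_{\bB^{\alpha,2}_{2;a}}<\infty$.

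The nonlinear term splits as
\[
\sqrt N\bigl[(K*\mu^N_s)\mu^N_s-(K*\mu_s)\mu_s\bigr]=\sA_{\mu_s}\eta^N_s+N^{-1/2}(K*\eta^N_s)\eta^N_s.
\]
The first term is controlled by Lemma~\ref{Le10} together with Theorem~\ref{lem:mu-besov-apriori}. The quadratic remainder, tested against $\nabla_v\varphi$, is $N^{-3/2}\sum_{i\neq j}h(Z^i,Z^j)$ plus diagonal terms, where $h$ is the doubly centered kernel built from $K(z_i-z_j)\nabla_v\varphi(z_i)$. Lemma~\ref{lem:clt-centered-double-sum} applies to $h$: its square-exponential hypothesis follows from $|K|^2\in\mK_0$ and Khasminskii's estimate, Corollary~\ref{Kha1}. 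After transfer through the entropy, this shows the remainder is $O(N^{-1/2})$ in $L^1$. The same transfer argument also yields the time-increment bounds.

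\emph{Step 2: tightness.} From \eqref{eq:clt-finite-equation}, the drift terms give H\"older-in-time increments in $\bB^{\alpha-3,2}_{2;a}$, where the loss of three derivatives comes from $v\cdot\nabla_x$. The martingale term is handled by BDG. The weight $\langle z\rangle^{\ell-1}$ with $\ell<0$ gives compactness of the embedding $\bB^{\alpha,2}_{2;a}\hookrightarrow\bB^{\alpha-3,2}_{2;a,\ell-1}$. Second moments of $\mu_t^N$ are controlled uniformly, so the weighted norms are finite, and tightness in $C([0,T];\bB^{\alpha-3,2}_{2;a,\ell-1})$ follows from Kolmogorov/Aldous-type criteria.

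\emph{Step 3: identification and uniqueness.} Along a convergent subsequence we pass to the limit in \eqref{eq:clt-finite-equation}. The martingales $M^{N,\varphi}$ have brackets $2\int_0^t\langle\mu^N_s,|\nabla_v\varphi_s|^2\rangle\dif s$, which converge to \eqref{eq:clt-noise-covariance} by Theorem~\ref{thm:main}. A martingale CLT, together with independence from $\eta^N_0$ (the classical multivariate CLT gives \eqref{initial:cor}), identifies a Gaussian martingale field. The linear term $\langle\sA_{\mu_s}\eta^N_s,\nabla_v\varphi_s\rangle$ converges because $\sA_{\mu_s}$ is continuous on $\bB^{\alpha,2}_{2;a}$ by Lemma~\ref{Le10}, with an integrable singularity $s^{-\delta}$.

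For uniqueness, let $\eta,\eta'$ be two martingale solutions on a common space with the same noise and initial data. Their difference $\rho$ solves the deterministic linear equation, and we test it against backward solutions $\varphi_s$ of the dual equation $\p_s\varphi+\sK\varphi+\sA^*_{\mu_s}\cdot\nabla_v\varphi=0$ with terminal datum $\varphi_t=\psi$. Solvability of this backward equation in $\bB^{2d+2,2}_{2;a}$ comes from Duhamel's formula, Lemma~\ref{lem:kinetic-besov-semigroup} and the paraproduct bound \eqref{product-negative}, via a Volterra-Gronwall argument. Uniqueness of the law then follows since Gaussian laws are determined by their covariances. This dual regularity with the time singularity of $\mu_s$, and the transfer of Lemma~\ref{lem:clt-centered-double-sum} through the entropy bound for the quadratic remainder, are the main obstacles. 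For the latter I would first try applying \eqref{Var} to $c\sqrt N|\text{remainder}|$, using the exponential bound \eqref{eq:clt-centered-double-sum}.
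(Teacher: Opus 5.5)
Your overall scheme matches the paper's:
\begin{itemize}
\item the entropy transfer of the subgaussian bound for $\|\eta^N_t\|_{\bB^{\alpha,2}_{2;a}}$ is Lemma~\ref{Le1};
\item your treatment of the tested quadratic remainder, via Lemma~\ref{lem:clt-centered-double-sum} with $\mH=L^2([0,T])$ and \eqref{Var} applied to $c\sqrt N|\cdot|$, is exactly Lemma~\ref{Le3};
\item the linear term, BDG, compact embedding and stable martingale CLT are handled as in the paper.
\end{itemize}

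\textbf{The gap: tightness of the nonlinear drift.} This step does not go through as written. To get equicontinuity in $C([0,T];\bB^{\alpha-3,2}_{2;a,\ell-1})$ from \eqref{eq:clt-finite-equation}, you need a bound on a Besov \emph{norm} of $\sqrt N\,[(K*\mu^N_s)\mu^N_s-(K*\mu_s)\mu_s]$, for instance in $L^1(\Omega;L^2([0,T];\bB^{\alpha,2}_{2;a}))$. Your remainder estimate only controls $\sqrt N\langle\nabla_v\varphi,(K*(\mu^N_s-\mu_s))(\mu^N_s-\mu_s)\rangle$ for one fixed test function at a time. ``The same transfer argument'' does not by itself upgrade this to a norm bound.

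The paper devotes Lemma~\ref{Le2} to this point, using a different split $G_1=(K*(\mu^N-\mu))\mu^N$, $G_2=(K*\mu)(\mu^N-\mu)$. Because $\mu^N$ is atomic, $\cR^a_jG_1$ is an average of translates $\check\phi^a_j(\cdot-Z^i)$ weighted by $U^i_N$. This reduces to the exponential bound \eqref{Eq1} from the propagation-of-chaos proof. The term $G_2$ is an i.i.d.\ fluctuation, treated block by block with Lemma~\ref{lem:abstract-average-holder}.

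Within your own split, the repair is as follows. Apply Lemma~\ref{lem:clt-centered-double-sum} with $\mH=L^2([0,T];\bB^{\alpha,2}_{2;a})$ to the doubly centred version of the kernel $(z,z')\mapsto K(z-z')\delta_z$. Its square-exponential hypothesis follows from $\|\delta_z\|_{\bB^{\alpha,2}_{2;a}}\lesssim1$, Jensen and Khasminskii. The entropy transfer then gives an $O(N^{-1/2})$ bound on the $L^1(\Omega;L^2([0,T];\bB^{\alpha,2}_{2;a}))$ norm of the remainder. Together with Lemma~\ref{Le10} for the linear part, this is enough for tightness, but it has to be stated and checked rather than asserted.

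\textbf{Uniqueness: a different route.} The paper never uses the backward equation here. It writes the homogeneous equation in mild form and runs a Volterra--Gr\"onwall argument directly on $\|\eta_t\|_{\bB^{\alpha,2}_{2;a}}$, $\alpha\in(1-\beta-2d,-2d)$, using \eqref{Lm1}--\eqref{Lm2}. Your duality with $Q_{s,t}$ from Lemma~\ref{Le49} has the advantage of identifying the law directly, but apply it to a single solution. Taking $\varphi_s=Q_{s,t}\psi$ in \eqref{eq:linearized-clt-weak} gives $\langle\eta_t,\psi\rangle=\langle\eta_0,Q_{0,t}\psi\rangle+M^{Q_{\cdot,t}\psi}_t$, as in Lemma~\ref{prop:clt-ou-covariance}. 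The joint Gaussian law of $(\eta_0,M)$ then fixes all finite-dimensional distributions. So you do not need to couple two solutions ``on a common space with the same noise'', a coupling that is not automatic for martingale solutions. You do need an approximation step, since $s\mapsto Q_{s,t}\psi$ is not in $C^1([0,t];\bB^{2d+2,2}_{2;a})$ and so is not directly admissible in Definition~\ref{def:clt-mart-sol}.

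\textbf{Two smaller slips.}
\begin{itemize}
\item Because of the unbounded factor $v$ in $v\cdot\nabla_x\eta^N$, the drift increments live in the weighted space $\bB^{\alpha-3,2}_{2;a,-1}$, not in $\bB^{\alpha-3,2}_{2;a}$. This still embeds into the target since $\ell-1<-1$.
\item No second moments of $\mu^N_t$ are needed, because negative weights only weaken the norm. Nor are such moments available, since Assumption~\ref{ass:clt} puts no moment condition on $\mu_0$.
\end{itemize}
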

We shall use the following consequence of Khasminskii's estimate. For every
$c_0>0$ and $T>0$,
\begin{align}\label{K-exp-fixed-path}
\sup_{z\in C([0,T];\mR^{2d})}
    \mE\exp\left\{
        c_0\int_0^T (|K(z_t-Z_t)|^2+|K(Z_t-z_t)|^2)\,\dif t
    \right\}
    <\infty ,
\end{align}
where $Z$ denotes the solution of \eqref{SDE10}. This follows by applying
Corollary~\ref{Kha1}, uniformly with respect to the deterministic path $z$, on
sufficiently short subintervals and then using the Markov property.
\subsection{\texorpdfstring{Uniform estimates of $\eta^N_t$}{Uniform estimates}}\label{subsec:clt-uniform}
In this subsection we establish the crucial moment estimates for $\eta^N_t$ and some integral functionals by using the variational representation formula \eqref{Var}
and subgaussian moment estimates established in Subsection \ref{sec:subgaussian}.
More precisely,
let $\bar Z^1,\ldots,\bar Z^N$ be independent copies of $Z$, and set
$$
    \bar\mu_t^N:=\frac1N\sum_{i=1}^N\delta_{\bar Z_t^i},
    \qquad
    \bar\eta_t^N:=\sqrt N(\bar\mu_t^N-\mu_t).
$$
Let $F$ be a nonnegative Borel measurable functional over $C([0,T];\mR^{2dN})$.
By the variational representation formula \eqref{Var} of the relative entropy, we have for any $\lambda>0$,
$$    
\mE_{\bP_{[0,T]}^{N,N}} F
\le
    \frac1\lambda
    \left(
        \cH(\bP_{[0,T]}^{N,N}|\bP_{[0,T]}^{\otimes N})
        +
        \log\mE_{\bP_{[0,T]}^{\otimes N}}\exp(\lambda F)
    \right)
$$
Assuming the initial $N$-particle law is exchangeable, this estimate, together
with \eqref{PoC:main}, yields that for some $C_1=C_1(T,d,K)>0$,
\begin{align}\label{eq:clt-entropy-transfer}
    \mE_{\bP_{[0,T]}^{N,N}} F
    &\le
    \frac{C_1}\lambda\left(\cH(\mu_0^{N,N}|\mu_0^{\otimes N})+1+
        \log\mE_{\bP_{[0,T]}^{\otimes N}}\exp(\lambda F)\right).
\end{align}
We use this estimate to prove three important estimates.
The first two estimates are used to show the tightness of $\eta^N_t$ in the space $C([0,T]; \bB^{-3d-3,2}_{2;a,-1})$.
The third estimate will be used to take limits in the proof of Theorem \ref{thm:clt}.
\begin{lemma}\label{Le1}
Assume that $\mu_0^{N,N}$ is exchangeable. For any $T>0$ and $\alpha<-2d$, there is a constant $C>0$ such that for all $t\in[0,T]$ and $N\in\mN$,
    \begin{align}\label{eq:clt-empirical-bound}
        \mE\|\eta_t^N\|_{\bB^{\alpha,2}_{2;a}}^2
        \lesssim_C\cH(\mu_0^{N,N}|\mu_0^{\otimes N})+1.
    \end{align}
\end{lemma}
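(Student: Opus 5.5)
We apply the entropy transfer inequality \eqref{eq:clt-entropy-transfer} to the functional $F:=\|\eta_t^N\|^2_{\bB^{\alpha,2}_{2;a}}$. This $F$ depends only on the time-$t$ marginal of the path; we truncate it by $M$ if needed and then let $M\to\infty$. With a fixed small $\lambda>0$, the lemma reduces to a bound, uniform in $N$ and $t\in[0,T]$, for the i.i.d. system:
\[
\mE\exp\big(\lambda\|\bar\eta^N_t\|^2_{\bB^{\alpha,2}_{2;a}}\big)\le C .
\]
To obtain it, we regard $\bB^{\alpha,2}_{2;a}$ as a separable Hilbert space $\mH$, since its norm is an $\ell^2$ sum of $L^2$ norms of the blocks. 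We then write
\[
\bar\eta_t^N=\frac1{\sqrt N}\sum_{i=1}^N\xi_i,\qquad \xi_i:=\delta_{\bar Z^i_t}-\mu_t .
\]
The $\xi_i$ are i.i.d. and centered in $\mH$, because $\mE\delta_{\bar Z_t^i}=\mu_t$ as distributions, and the Bochner expectation agrees with this since each $\xi_i$ is bounded in $\mH$.

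Boundedness is the key point. For $\alpha<-2d$, Lemma~\ref{Le22}, specifically \eqref{measure-emb} with $p=2$, gives
\[
\|\delta_z\|_{\bB^{-2d,\infty}_{2;a}}\lesssim 1 .
\]
Combining this with the embedding \eqref{easy_emb}, $\bB^{-2d,\infty}_{2;a}\subset\bB^{\alpha,2}_{2;a}$ (valid because $\alpha<-2d$), yields $\sup_z\|\delta_z\|_{\bB^{\alpha,2}_{2;a}}\le C_\alpha$. The same bound holds for $\mu_t$ by convexity, i.e. Minkowski's inequality applied to $\mu_t=\int\delta_z\,\mu_t(\dif z)$. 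Hence $\|\xi_i\|_\mH\le 2C_\alpha$ almost surely. Alternatively, one can verify translation invariance directly: $\|\cR^a_j\delta_z\|_{L^2}=\|\cF^{-1}\phi^a_j\|_{L^2}\asymp 2^{2dj}$, which is summable against $2^{2\alpha j}$ exactly when $\alpha<-2d$.

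It follows that $\mE\exp(a\|\xi_i\|^2_\mH)\le \Lambda:=e^{4aC_\alpha^2}$ for any $a>0$, with trivial $\cG$. Lemma~\ref{lem:abstract-average-holder} with $m=N$ and, say, $D=\beta+1$ then gives
\[
\mE\exp\Big(\frac{a}{DN}\Big\|\sum_i\xi_i\Big\|^2_\mH\Big)\le C ,
\]
that is, $\mE\exp(\lambda\|\bar\eta^N_t\|^2)\le C$ with $\lambda=a/D$, uniformly in $N$ and $t$. Feeding this into \eqref{eq:clt-entropy-transfer} gives
\[
\mE\|\eta^N_t\|^2_{\bB^{\alpha,2}_{2;a}}\le \frac{C_1}{\lambda}\big(\cH(\mu_0^{N,N}|\mu_0^{\otimes N})+1+\log C\big),
\]
which is \eqref{eq:clt-empirical-bound}. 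The constant depends only on $T,d,K,\alpha$, through $C_1$ from Theorem~\ref{thm:main} and $C_\alpha$.

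The only step needing care is the claim that $\bB^{\alpha,2}_{2;a}$ is a Hilbert space in which the Dirac masses are uniformly bounded, so that Lemma~\ref{lem:abstract-average-holder} applies to the Bochner-valued variables $\xi_i$. Measurability of $z\mapsto\delta_z$ into $\mH$ follows from continuity of $z\mapsto\cR^a_j\delta_z=(\cF^{-1}\phi^a_j)(\cdot-z)$ in $L^2$, together with dominated convergence in $j$. Since the bound is uniform, no singular-kernel analysis is needed here; the singularity of $K$ enters only through the entropy bound of Theorem~\ref{thm:main}.
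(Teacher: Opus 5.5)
Your proposal is correct and matches the paper's proof essentially step for step. You bound $\xi_i=\delta_{\bar Z^i_t}-\mu_t$ uniformly in the Hilbert space $\bB^{\alpha,2}_{2;a}$ via \eqref{measure-emb} with $p=2$ and \eqref{easy_emb}. You then apply Lemma~\ref{lem:abstract-average-holder} to obtain an $N$-uniform exponential-square moment for $\bar\eta^N_t$, and transfer it to the interacting system through \eqref{eq:clt-entropy-transfer}.
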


\begin{proof}
Let $\alpha<-2d$ and $\xi_i:=\delta_{\bar Z^i_t}-\mu_t$. Then by \eqref{easy_emb} and \eqref{measure-emb}
with $p=2$, we have
\begin{align*}
\|\xi_{i}\|_{\bB^{\alpha,2}_{2;a}}\lesssim\|\xi_{i}\|_{\bB^{-2d,\infty}_{2;a}}\leq \|\delta_{\bar Z^i_t}\|_{\bB^{-2d,\infty}_{2;a}}
+\|\mu_t\|_{\bB^{-2d,\infty}_{2;a}}\lesssim 1,
\end{align*}
where the implicit constant only depends on $d,\alpha$.
In particular, $(\xi_{i})_{i=1,\cdots,N}$ are centered and independent  in $\bB^{\alpha,2}_{2;a}$.
Moreover, note that
$$
  \bar\eta_t^N=\frac{1}{\sqrt{N}}\sum_{i=1}^N(\delta_{\bar Z^i_t}-\mu_t)=\frac{1}{\sqrt{N}}\sum_{i=1}^N \xi_i.
$$
Thus, by Lemma~
\ref{lem:abstract-average-holder} with $\mH=\bB^{\alpha,2}_{2;a}$, there are
constants $c_0,C_0>1$, independent of $t,N$, such that
\begin{align}\label{Ep1}
    \mE\exp\left(c_0\|\bar\eta_t^N\|_{\bB^{\alpha,2}_{2;a}}^2\right)= \mE\exp\left(\frac{c_0}{N}\left\|\sum_{i=1}^N \xi_{i}\right\|_{\bB^{\alpha,2}_{2;a}}^2\right)\le C_0<\infty.
\end{align}
Fix $t\in[0,T]$. Applying \eqref{eq:clt-entropy-transfer} to
\[
    F(z^1_\cdot,\cdots,z^N_\cdot)
    =
    \frac1N
    \left\|\sum_{i=1}^N(\delta_{z^i_t}-\mu_t)\right\|_{\bB^{\alpha,2}_{2;a}}^2,
\]
one gets
\begin{align*}
    \mE\|\eta_t^N\|_{\bB^{\alpha,2}_{2;a}}^2
  &  \le C_1
c_0^{-1}
    \left(
        \cH(\mu_0^{N,N}|\mu_0^{\otimes N})+1
        +
        \log\mE\exp(c_0\|\bar\eta_t^N\|_{\bB^{\alpha,2}_{2;a}}^2)
    \right),
\end{align*}
which gives \eqref{eq:clt-empirical-bound} by \eqref{Ep1}.
\end{proof}

\begin{lemma}\label{Le2}
Assume that $\mu_0^{N,N}$ is exchangeable. For any $T>0$ and $\alpha<-2d$, there is a constant $C>0$ 
independent of $N$ such that
    \begin{align}\label{eq:clt-interaction-bound}
        N\int_0^T
        \mE\left\|
        (K*\mu_t^N)\mu_t^N-(K*\mu_t)\mu_t
        \right\|_{\bB^{\alpha,2}_{2;a}}^2\,\dif t\lesssim_C\cH(\mu_0^{N,N}|\mu_0^{\otimes N})+1.
    \end{align}
\end{lemma}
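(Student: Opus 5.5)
Following the proof of Lemma~\ref{Le1}, I will apply the entropy transfer inequality \eqref{eq:clt-entropy-transfer} to the path functional
\[
F(z^1,\dots,z^N):=N\int_0^T\bigl\|(K*\mu^N_t)\mu^N_t-(K*\mu_t)\mu_t\bigr\|^2_{\bB^{\alpha,2}_{2;a}}\,\dif t ,
\]
where $\mu^N_t$ is the empirical measure of $(z^i_t)$. It then suffices to show $\mE\exp(\lambda F(\bar Z^1,\dots,\bar Z^N))\le C$ for some small $\lambda>0$, uniformly in $N$, with $\bar Z^i$ i.i.d.\ copies of $Z$.

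For the independent system, write $\bar\mu^N_t-\mu_t=N^{-1/2}\bar\eta^N_t$ and split
\[
\sqrt N\bigl[(K*\bar\mu^N_t)\bar\mu^N_t-(K*\mu_t)\mu_t\bigr]
=(K*\mu_t)\bar\eta^N_t+(K*\bar\eta^N_t)\mu_t+N^{-1/2}(K*\bar\eta^N_t)\bar\eta^N_t .
\]
The first two terms form $\sA_{\mu_t}\bar\eta^N_t$. I will bound them via the product estimates of Lemma~\ref{Le23}, used as in \eqref{Lm1}--\eqref{Lm2}, together with $K\in\bB^{\beta-2,\infty}_{\infty;a}\cap\bB^{\beta-2,\infty}_{2;a}$. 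This gives a bound by $\|\bar\eta^N_t\|_{\bB^{\alpha',2}_{2;a}}$ for a slightly larger $\alpha'<-2d$, times a norm of $\mu_t$. By Theorem~\ref{lem:mu-besov-smoothing}, that norm is bounded uniformly on $[0,T]$ because $\mu_0\in\bB^{2d+1,\infty}_{1;a}\cap\bB^{2d+1,\infty}_{2;a}$. Since the target index $\alpha$ may be lowered by $2-\beta$, the loss is harmless. Exponential moments of $\int_0^T\|\bar\eta^N_t\|^2\,\dif t$ then follow from \eqref{Ep1} and Jensen's inequality in time.

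The quadratic term is the main obstacle, because $(K*\bar\eta^N_t)\bar\eta^N_t$ is not controlled by Besov products: both factors are rough. Here I will work directly with particles and avoid paraproducts. I write
\[
N^{-1/2}(K*\bar\eta^N_t)\bar\eta^N_t=N^{-3/2}\sum_{i,j}\bigl(K(\cdot-\bar Z^j_t)-K*\mu_t\bigr)\bigl(\delta_{\bar Z^i_t}-\mu_t\bigr).
\]
By \eqref{measure-emb} each $\delta_{\bar Z^i_t}$ has bounded negative norm, and the factor multiplying $\delta_{\bar Z^i_t}$ is the value $K(\bar Z^i_t-\bar Z^j_t)-K*\mu_t(\bar Z^i_t)$. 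I will therefore bound the $\bB^{\alpha,2}_{2;a}$ norm using the embedding $\|f\delta_z\|\lesssim|f(z)|$ and the boundedness of $\mu_t$, and control the pieces involving $\mu_t$ as in the linear terms. This reduces the problem to bounding the $\mH=L^2([0,T];\bB^{\alpha,2}_{2;a})$ norm of the double sum.

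The diagonal part ($i=j$) uses $K(0)=0$ and contributes $O(N^{-1/2})$ times terms controlled by Corollary~\ref{Kha1}. For the off-diagonal part, the kernel $h(\bar Z^i,\bar Z^j)$ is doubly centered. Its square-exponential integrability under frozen arguments follows from \eqref{K-exp-fixed-path}, applied with the same Jensen trick as in \eqref{m-Jensen}. Lemma~\ref{lem:clt-centered-double-sum} then gives $\mE\exp\bigl(c\,N^{-1}\|\sum_{i\ne j}h\|_{\mH}\bigr)\le2$. Since the prefactor is $N^{-3/2}$, the squared contribution to $F$ is $N^{-2}\|\sum_{i\ne j}h\|^2_\mH=(N^{-1}\|\sum_{i\ne j}h\|_\mH)^2$, whose exponential moment is not directly available.

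To resolve this, I will instead pass through the first-moment form of the entropy transfer. Combining Lemma~\ref{lem:abstract-average-holder}, applied conditionally on $\bar Z^i$ to the sum over $j$ as in Lemma~\ref{lem:short-time-exp}, with $\|\delta_{\bar Z^i}\|\lesssim1$ gives subgaussian control of $N^{-1/2}\|\sum_j h(\bar Z^i,\bar Z^j)\|$ for each $i$. The term then becomes $N^{-1}\sum_i(\text{subgaussian})^2$, which has a finite exponential moment by Hölder's inequality. This per-particle conditioning is the step I expect to require the most care.
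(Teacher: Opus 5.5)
You have a genuine gap in the linear term $(K*\bar\eta^N_t)\mu_t$. Your final handling of the $\delta$-part of the quadratic term is correct and is exactly the paper's key step: condition on $\bar Z^i$, apply Lemma~\ref{lem:abstract-average-holder} to the $j$-sum, and use that $\|\delta_z\|_{\bB^{\alpha,2}_{2;a}}$ does not depend on $z$. This bounds that part by $N^{-1}\sum_i N\|U^i_N\|^2_{L^2([0,T])}$ with $U^i_N(t)=(K*(\bar\mu^N_t-\mu_t))(\bar Z^i_t)$, whose exponential moments follow from \eqref{Eq1} and convexity.

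The problem is what \eqref{Lm2} gives you for $(K*\bar\eta^N_t)\mu_t$. It controls this term only in $\bB^{\alpha'+\beta-2,2}_{2;a}$ with $\alpha'<-2d$. You cannot take $\alpha'\ge-2d$, since $\delta_z\notin\bB^{-2d,2}_{2;a}$. Hence you obtain \eqref{eq:clt-interaction-bound} only for $\alpha<-2d-(2-\beta)$. Up to constants, $\|\cdot\|_{\bB^{\alpha,2}_{2;a}}$ is nondecreasing in $\alpha$, so a bound at a lower index is a weaker statement. Your claim that ``the target index may be lowered'' is therefore true for the later tightness argument, but not for the lemma, which asserts the bound for every $\alpha<-2d$. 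The same loss recurs in the $\mu_t$-piece of your quadratic term, which you also propose to treat ``as in the linear terms''.

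The missing observation is a cancellation. That $\mu_t$-piece is
\[
-N^{-3/2}\sum_{i,j}\bigl(K(\cdot-\bar Z^j_t)-K*\mu_t\bigr)\mu_t=-(K*\bar\eta^N_t)\mu_t ,
\]
so your three-term expansion collapses to
\[
\sqrt N\bigl[(K*\bar\mu^N_t)\bar\mu^N_t-(K*\mu_t)\mu_t\bigr]=(K*\mu_t)\bar\eta^N_t+(K*\bar\eta^N_t)\bar\mu^N_t .
\]
This is precisely the paper's splitting $(K*(\mu^N_t-\mu_t))\mu^N_t+(K*\mu_t)(\mu^N_t-\mu_t)$, and \eqref{Lm2} is never needed.

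For $(K*\mu_t)\bar\eta^N_t$, your use of \eqref{Lm1} is lossless and works, with two caveats:
\begin{enumerate}
\item Theorem~\ref{lem:mu-besov-apriori} gives only $\|\mu_t\|_{\bB^{\gamma,\infty}_{1;a}}\lesssim t^{-(\gamma-2d-1)/2}$ for $\gamma>2d+1$, not a uniform bound, so you need Jensen's inequality with a weighted time measure.
\item This route uses the Besov regularity of $K$ and $\mu_0$.
\end{enumerate}
The paper avoids both by working block by block. Each $\cR^a_j\bigl((K*\mu_t)(\bar\mu^N_t-\mu_t)\bigr)$ is an i.i.d.\ centered average in $L^2([0,T];L^2)$, whose summands have norms equal to $2^{2dj}$ times square-exponentially integrable factors. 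Lemma~\ref{lem:abstract-average-holder} and the entropy transfer then apply per block, and the sum over $j$ converges because $\alpha<-2d$. Only $|K|^2\in\mK_0$ is used.

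Your abandoned detour through Lemma~\ref{lem:clt-centered-double-sum} was also misconceived. The kernel $(K(z-z')-K*\mu_t(z))\delta_z$ is not centered in $z$, and the squared contribution to $F$ is $N^{-3}\|\sum_{i\ne j}h\|^2_{\mH}$, not $N^{-2}\|\sum_{i\ne j}h\|^2_{\mH}$.
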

\begin{proof}
We make the following decomposition: 
\begin{align*}
(K*\mu_t^N)\mu_t^N-(K*\mu_t)\mu_t=
(K*(\mu^N_t-\mu_t))\mu^N_t+(K*\mu_t)(\mu^N_t-\mu_t)=:G^N_{1,t}+G^N_{2,t}. 
\end{align*}
We first treat $G_1^N$. To use \eqref{eq:clt-entropy-transfer}, set
$$
    \bar G_{1,t}^N:=(K*(\bar\mu_t^N-\mu_t))\bar\mu_t^N.
$$
For simplicity of notation, write
 $$
 \check{\phi}_j^a:=\cF^{-1}{\phi}_j^a.
 $$
By the definition of $\cR^a_j$,
\begin{align*}
    \cR_j^a\bar G_{1,t}^N
=\frac{1}{N}\sum_{i=1}^N\check{\phi}_j^a(\cdot-\bar{Z}^i_t) U^i_N(t),
\end{align*}
where
$$
U^i_N(t):=\frac1N\sum_{j=1}^N (K(\bar{Z}^i_t-\bar{Z}^j_t)-(K*\mu_t)(\bar{Z}^i_t)).
$$
Note that by Fubini's theorem and Minkowski's inequality,
\begin{align*}
    \|\bar G_{1}^N\|_{L^2_T\bB^{\alpha,2}_{2;a}}^2=\sum_{j=-1}^\infty\int_0^T 2^{2\alpha j}\|\cR_j^a\bar G_{1,t}^N\|_{L^2}^2\dif t\le \sum_{j=-1}^\infty2^{2\alpha j}\|\check{\phi}_j^a\|_{L^2}^2 \frac{1}{N}\sum_{i=1}^N\|U^i_N\|_{L^2([0,T])}^2.
\end{align*}
Since $\alpha<-2d$ and $\|\check{\phi}_j^a\|_{L^2}\lesssim 2^{2dj}$, we have
\begin{align*}
    \sum_{j=-1}^\infty2^{2\alpha j}\|\check{\phi}_j^a\|_{L^2}^2\lesssim  \sum_{j=-1}^\infty2^{2\alpha j+4d j}<\infty. 
\end{align*}
Thus, by \eqref{Eq1}, there is a small constant $c_0>0$ such that
\begin{align*}
    \mE \exp\left(c_0 N \|\bar G_{1}^N\|_{L^2_T\bB^{\alpha,2}_{2;a}}^2\right)
    \le
    \mE \exp\left(c_0 C_d \sum_{i=1}^N\|U^i_N\|_{L^2([0,T])}^2\right)
    \le C_0<\infty,
\end{align*}
with a constant $C_0$ independent of $N$. Indeed, if
\[
    F_{0,T}^{N,i}
    :=
    N\|U_N^i\|_{L^2([0,T])}^2,
\]
then $\sum_i\|U_N^i\|_{L^2([0,T])}^2=N^{-1}\sum_iF_{0,T}^{N,i}$, and
each $F_{0,T}^{N,i}$ has the same law as $F_{0,T}^N$ under
$\bP_{[0,T]}^{\otimes N}$. H\"older's inequality and \eqref{Eq1} therefore
give the displayed bound after reducing $c_0$, if necessary.
Then taking $\lambda=Nc_0$ in \eqref{eq:clt-entropy-transfer} gives that
\begin{align}\label{0607:05}
    \mE\|G_{1}^N\|_{L^2_T\bB^{\alpha,2}_{2;a}}^2
    \le
    \frac{C_1c_{0}^{-1}}{N}\left(
\cH(\mu_0^{N,N}|\mu_0^{\otimes N})+1+\log C_{0}
    \right).
\end{align}

Now we turn to the term $G_2^N$. We still set
$$
    \bar G_{2,t}^N:=(K*\mu_t)(\bar\mu^N_t-\mu_t).
$$
Then $\bar G_{2,t}^N$ is an ordinary i.i.d. empirical fluctuation, and
\begin{align*}
    \mathcal R_j^a\bar G_{2,t}^N
    &=\int_{\mR^{2d}}\check{\phi}_j^a(\cdot-z)(K*\mu_t)(z)(\bar\mu^N_t-\mu_t)(\dif z)=:\frac1N\sum_{i=1}^N V^i_j(t),
\end{align*}
where
$$
V^i_j(t):=\check{\phi}_j^a(\cdot-\bar Z_t^i)(K*\mu_t)(\bar Z_t^i)
        -
        \int_{\mR^{2d}} \check{\phi}_j^a(\cdot-z)(K*\mu_t)(z)\,\mu_t(\dif z).
$$
Note that $(V^i_j)_{i=1,\cdots, N}$ are i.i.d., $\mE V^i_j(t)=0$ and 
\begin{align*}
    \|V^i_j\|_{L^2_TL^2}^2
    \le
    C\|\check{\phi}^a_j\|_{L^2}^2
    \int_0^T
    \left(
        |(K*\mu_t)(\bar{Z}^i_t)|^2
        +
        \langle\mu_t,|K*\mu_t|^2\rangle
    \right)\dif t.
\end{align*}
Since $ \|\check{\phi}^a_j\|_{L^2}\lesssim 2^{2d j}$, Jensen's inequality,
\eqref{K-exp-fixed-path}, and the Krylov estimate imply that there is a
$c_0>0$ such that
\begin{align*}
   \sup_{j,N} \mE\exp\left(c_0 2^{-4d j}  \|V^i_j\|_{L^2_TL^2}^2\right)\le C_{K,T}<\infty.
\end{align*}
Then Lemma \ref{lem:abstract-average-holder} yields constants $c_0,C_0>0$ independent of $N,j$ such that
\begin{align*}
   \mE\exp\left(c_0 2^{-4d j} N \|\mathcal R_j^a\bar G_{2}^N\|_{L^2_TL^2}^2\right)\le C_0,
\end{align*}
which by taking $\lambda=c_0N 2^{-4d j}$ in \eqref{eq:clt-entropy-transfer} implies that
\begin{align*}
    \mE \|\mathcal R_j^a G_{2}^N\|_{L^2_TL^2}^2\le 2^{4d j}\frac{C_1c_0^{-1}}{N}\left(\cH(\mu_0^{N,N}|\mu_0^{\otimes N})+1+\log C_0\right).
\end{align*}
Thus, multiplying both sides by $2^{2\alpha j}$ and summing over $j$ yields
\begin{align*}
    \mE \|G_{2}^N\|_{L^2_T\bB^{\alpha,2}_{2;a}}^2\le \sum_{j=-1}^\infty 2^{2(2d+\alpha)j}\frac{C_1c_0^{-1}}{N}\left(\cH(\mu_0^{N,N}|\mu_0^{\otimes N})+1+\log C_0\right).
\end{align*}
This together with \eqref{0607:05} gives \eqref{eq:clt-interaction-bound}.
\end{proof}

\begin{lemma}\label{Le3}
Assume that $\mu_0^{N,N}$ is exchangeable. For any $T>0$ and any
measurable $\varphi:[0,T]\times\mR^{2d}\to\mR$ satisfying
$\|\nabla_v\varphi\|_{L^\infty([0,T]\times\mR^{2d})}<\infty$,
there is a constant $C=C(T,\varphi)>0$ 
independent of $N$ such that
    \begin{align}\label{eq:clt-quadratic-rem}
        &N\mE\int_0^T
        \left|
        \left\langle
        \nabla_v\varphi_t,
        (K*(\mu_t^N-\mu_t))(\mu_t^N-\mu_t)
        \right\rangle
        \right|\,\dif t\lesssim_C\cH(\mu_0^{N,N}|\mu_0^{\otimes N})+1.
\end{align}
\end{lemma}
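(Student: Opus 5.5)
The plan is to write the integrand in particle form and transfer an exponential moment bound from the independent system, using the entropy transfer \eqref{eq:clt-entropy-transfer} with $\lambda$ of order one. Set $\psi_t:=\nabla_v\varphi_t$, which is bounded. For a configuration $z=(z^1,\dots,z^N)$ with empirical measure $\mu^N_t$,
\[
N\big\langle\psi_t,(K*(\mu^N_t-\mu_t))(\mu^N_t-\mu_t)\big\rangle
=N\langle \mu^N_t-\mu_t,\ \psi_t\cdot U_t\rangle ,
\]
where $U_t:=K*(\mu^N_t-\mu_t)$. The right-hand side splits into two terms. The first is
\[
\sum_i\psi_t(z^i_t)\cdot U^i_N(t),
\]
with $U^i_N$ exactly as in the proof of Lemma~\ref{Le2}. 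The second is
\[
N\langle\mu_t,\psi_t\cdot U_t\rangle
=\sum_j\big[\langle\mu_t,\psi_t\cdot K(\cdot-z^j_t)\rangle-\langle\mu_t,\psi_t\cdot(K*\mu_t)\rangle\big].
\]
I will take $F$ to be the absolute value of the time integral of these terms, bounded by the sum of the absolute values of the two pieces. Then \eqref{eq:clt-entropy-transfer} gives
\[
N\mE\int_0^T|\cdots|\,\dif t\le C_1\lambda^{-1}\Big(\cH+1+\log\mE_{\bP^{\otimes N}}e^{\lambda F}\Big),
\]
so it suffices to show that $\mE_{\bP^{\otimes N}}e^{\lambda F}$ is bounded uniformly in $N$ for some fixed small $\lambda>0$.

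\textbf{First piece.} By Cauchy--Schwarz in $i$ and in time,
\[
\int_0^T\Big|\sum_i\psi_t(z^i_t)\cdot U^i_N(t)\Big|\,\dif t
\le\|\psi\|_\infty\sum_i\int_0^T|U^i_N(t)|\,\dif t
\le\|\psi\|_\infty\Big(N T\sum_i\|U^i_N\|_{L^2_T}^2\Big)^{1/2}.
\]
In the proof of Lemma~\ref{Le2} we have $\sum_i\|U^i_N\|^2_{L^2_T}=N^{-1}\sum_iF^{N,i}_{0,T}$, so the right-hand side equals $\|\psi\|_\infty\big(T\sum_iF^{N,i}_{0,T}\big)^{1/2}$. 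This is not good enough: the sum is of order $N$, giving order $\sqrt N$. So the naive pointwise $|\cdot|$ must be avoided, and I instead keep the cancellation in $i$. Write
\[
\sum_i\psi_t(z^i_t)\cdot U^i_N(t)=\frac1N\sum_{i,j}h_t(z^i,z^j),
\qquad
h_t(z,z'):=\psi_t(z)\cdot\big[K(z-z')-(K*\mu_t)(z)\big].
\]
Then Hoeffding-decompose $h$ into a doubly centered kernel $\tilde h$ plus a linear part $g(z')=\mE h(\xi,z')$:
\[
h_t(z,z')=\tilde h_t(z,z')+g_t(z'),
\]
which is possible because $\mE h(z,\xi)=0$ already holds for every frozen $z$.
\begin{itemize}
\item The degenerate part $\frac1N\sum_{i\ne j}\tilde h(\bar Z^i,\bar Z^j)$, viewed in $\mH=L^2([0,T];\mR)$ and then integrated in time (the $L^1_T$ norm is bounded by $\sqrt T$ times the $L^2_T$ norm), is controlled by Lemma~\ref{lem:clt-centered-double-sum}. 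This gives an exponential moment of $\frac{c_0}{N}\|\sum_{i\ne j}\tilde h\|_{\mH}$. The square-exponential hypotheses of that lemma follow from $|\tilde h|^2\lesssim\|\psi\|_\infty^2\big(|K(z-z')|^2+\text{conditional averages}\big)$, together with Jensen and \eqref{K-exp-fixed-path}.
\item The diagonal terms $i=j$ give $N^{-1}\sum_i\psi\cdot(K*\mu_t)(\bar Z^i)$, using $K(0)=0$. These have exponential moments by Jensen and the Khasminskii estimate, uniformly in $N$.
\end{itemize}

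\textbf{Second piece and the linear part.} The linear part $\frac1N\sum_{i,j}g(\bar Z^j)=\sum_jg(\bar Z^j)$ combines with the second piece into a sum of i.i.d. centered $L^2_T$-valued variables. The raw sums are of order $\sqrt N$ in $L^2_T$, and this is acceptable: after the factor $N$ the quantity is genuinely of order $\sqrt N$ only if these linear terms do not cancel. I check the cancellation: $g_t(z')=-\langle\mu_t,\psi_t\cdot K(\cdot-z')\rangle+\langle\mu_t,\psi_t\cdot K*\mu_t\rangle$, which is exactly the negative of the summand of the second piece. Hence the linear terms cancel identically, leaving only the degenerate U-statistic and the diagonal term. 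Both then have exponential moments with $\lambda=c_0$ independent of $N$, using Lemma~\ref{lem:clt-centered-double-sum} for the former.

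\textbf{Main obstacle.} The main obstacle is verifying the hypotheses of Lemma~\ref{lem:clt-centered-double-sum} for $\tilde h$ in $L^2([0,T])$. Specifically, I need the uniform square-exponential integrability of $\int_0^T|K(z_t-Z_t)|^2\,\dif t$ over frozen paths $z$ (in both argument orders). I will take this from \eqref{K-exp-fixed-path}, with the $\mu_t$-averaged terms handled by Jensen.
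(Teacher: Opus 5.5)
Your argument is correct and is essentially the paper's proof. After the cancellation you find, your doubly centered kernel $\tilde h_t$ coincides with the paper's $\Gamma_t$, so $N\langle\nabla_v\varphi_t,(K*(\mu^N_t-\mu_t))(\mu^N_t-\mu_t)\rangle=N^{-1}\sum_{i,j}\Gamma_t(z^i_t,z^j_t)$. The paper likewise uses the entropy transfer with $\lambda$ of order one, Lemma~\ref{lem:clt-centered-double-sum} in $L^2([0,T])$ for the off-diagonal sum, and Khasminskii plus convexity for the diagonal.

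Only the bookkeeping needs fixing:
\begin{itemize}
\item The correct sign is $g_t(z')=\langle\mu_t,\psi_t\cdot K(\cdot-z')\rangle-\langle\mu_t,\psi_t\cdot(K*\mu_t)\rangle$. The linear parts still cancel, but because the second piece enters with a minus sign.
\item The absolute value must be taken after this cancellation, not piece by piece as your opening sentence suggests.
\item The diagonal is $N^{-1}\sum_i\Gamma_t(z^i_t,z^i_t)$ with $\Gamma_t(z,z)=-\psi_t(z)\cdot(K*\mu_t)(z)-g_t(z)$. You omitted the $g_t$ part, but it is controlled in the same way.
\end{itemize}
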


\begin{proof}
Fix $T>0$ and such a function $\varphi$.
By \eqref{eq:clt-entropy-transfer}, it suffices to prove that for some $c_\varphi,C_\varphi>0$,
\begin{align}\label{eq:clt-ref-quadratic}
    \sup_{N\ge1}
    \mE\exp\left\{c_\varphi N\int^T_0\left|
        \left\langle
        \nabla_v\varphi_t,
        (K*(\bar\mu_t^N-\mu_t))(\bar\mu_t^N-\mu_t)
        \right\rangle
        \right|\,\dif t\right\}
    \le C_\varphi .
\end{align}
Below,
we use Lemma \ref{lem:clt-centered-double-sum} with $\mH:=L^2([0,T])$ to show the above estimate.
For this aim, we define for any $t\in[0,T]$ and $z,z'\in\mR^{2d}$,
$$
    \Phi_t(z,z'):=\nabla_v\varphi_t(z)\cdot K(z-z'),
$$
and
\begin{align}\label{eq:clt-centered-kernel}
\Gamma_t(z,z')
    &:=
    \Phi_t(z,z')
    -\mu_t(\Phi_t(z,\cdot))-\mu_t(\Phi_t(\cdot,z'))
    +(\mu_t\otimes\mu_t)(\Phi_t).
\end{align}
With the above notations, we have
$$
    \left\langle
    \nabla_v\varphi_t,(K*(\bar\mu_t^N-\mu_t))(\bar\mu_t^N-\mu_t)
    \right\rangle
    =\frac1{N^2}\sum_{i,j=1}^N\Gamma_t(\bar Z^i_t,\bar Z^j_t).
$$
For any $z,z'\in C([0,T];\mR^{2d})$, we set
$$
    h(z,z')(t):=\Gamma_t(z_t,z'_t).
$$
Clearly, for each $t\in[0,T]$,
$$
    \mE h(z,\bar Z)(t)
    =
    \mE h(\bar Z,z')(t)
    =0.
$$
By Krylov estimate \eqref{Kry10}, it is easy to see that
$$
h(z,\bar Z),\ h(\bar Z,z')\in L^2(\Omega\times[0,T]),
$$
and as in the proof of Corollary~\ref{Kha1},  there is a small constant $a_\varphi>0$ such that
$$
    \sup_{z\in C([0,T];\mR^{2d})} \mE\exp\bigl(a_\varphi
    \|h(z,\bar Z)\|_{L^2([0,T])}^2\bigr)
    +
    \sup_{z'\in C([0,T];\mR^{2d})} \mE\exp\bigl(a_\varphi
    \|h(\bar Z,z')\|_{L^2([0,T])}^2\bigr)
    \le C_\varphi .
$$
Thus, by Lemma~\ref{lem:clt-centered-double-sum} with $\mH=L^2([0,T])$,  we have for some $c_\varphi>0$,
\begin{align}\label{Gh1}
    \sup_{N\ge2}
    \mE\exp\left(
        \frac{c_\varphi}{N}
        \Big\|
        \sum_{i\ne j}
        h(\bar Z^i,\bar Z^j)
        \Big\|_{L^2([0,T])}
    \right)<\infty.
\end{align}
On the other hand,  by Khasminskii's estimate, we also have
for some $a_\varphi, C_\varphi>0$,
$$
    \mE\exp\bigl(a_\varphi\|\Gamma_{\cdot}(\bar Z_\cdot,\bar Z_\cdot)\|_{L^2([0,T])}^2\bigr)
    \le C_\varphi,
$$
which in turn implies that for some $c_\varphi>0$,
$$
    \mE\exp\left(c_\varphi
        \|\Gamma_{\cdot}(\bar Z_\cdot,\bar Z_\cdot)\|_{L^1([0,T])}
    \right)
    \le C_\varphi .
$$
Finally, by convexity of the exponential, we have
\begin{align*}
\mE\exp\left(    \frac{c_\varphi}N\Big\|\sum_{i=1}^N
        \Gamma_{\cdot}(\bar Z^i_\cdot,\bar Z^i_\cdot)\Big\|_{L^1([0,T])}
    \right)
&\le
\mE\exp\left(    \frac{c_\varphi}N\sum_{i=1}^N
        \|\Gamma_{\cdot}(\bar Z^i_\cdot,\bar Z^i_\cdot)\|_{L^1([0,T])}
    \right)\\
&    \le
    \frac1N\sum_{i=1}^N
    \mE\exp\left(
        c_\varphi\|\Gamma_{\cdot}(\bar Z^i_\cdot,\bar Z^i_\cdot)\|_{L^1([0,T])}
    \right)
    \le C_\varphi .
\end{align*}
This together with \eqref{Gh1} gives \eqref{eq:clt-ref-quadratic}.
\end{proof}

\subsection{Proof of Theorem \ref{thm:clt}}\label{subsec:clt-proof}

\begin{proof}[Proof of Theorem \ref{thm:clt}]
The proof is organized into three parts: uniform estimates, tightness, and identification of limit points. The uniform estimates established in Lemmas~\ref{Le1}--\ref{Le2} provide $L^2$-bounds for $\eta_t^N$ and for the nonlinear interaction term, uniformly in $N$. Together with the martingale estimates from the Hilbert-space BDG inequality, these bounds imply tightness of $(\eta^N, M^N)$ in $C([0,T];\bB^{\gamma-3,2}_{2;a,\ell-1})\times C([0,T];\bB^{\gamma-1,2}_{2;a,\ell})$ for $\gamma<\alpha<-2d$ and $\ell<0$. Identification of limit points uses the expansion \eqref{eq:clt-interaction-expansion}: Lemma~\ref{Le3} shows that the quadratic remainder vanishes, while the linear terms converge by tightness and localization. The stable martingale central limit theorem identifies the limiting noise as a Gaussian martingale with covariance \eqref{eq:clt-noise-covariance}. Finally, uniqueness of the limiting martingale problem implies convergence of the full sequence.
Below we fix $\gamma<\alpha<-2d$.

\emph{Step 1: uniform estimates and tightness.}
Note that by \eqref{eq:clt-finite-equation}
\begin{align}\label{Eq2}
\begin{split}
\eta_t^N
    &=
\eta_0^N
    +
    \int_0^t(\Delta_v-v\cdot\nabla_x)\eta_s^N\,\dif s
-\sqrt N\int_0^t
        \div_v((K*\mu_s^N)\mu^N_s-(K*\mu_s)\mu_s)\,\dif s-    M_t^{N},
 \end{split}
\end{align}
where
$$
M_t^N=\frac{\sqrt{2}}{\sqrt N}\sum_{i=1}^N\int_0^t\nabla_v\delta_{Z_s^{N,i}}\cdot\dif W_s^i.
$$
Since $\mu_0^{N,N}=\mu_0^{\otimes N}$, by Bernstein's inequality \eqref{eq:clt-lp-gradient} and Lemma \ref{Le1}, we have
\begin{align}\label{0615:00}
    \sup_{N\ge1}\sup_{s\in[0, T]}\mE\|(\Delta_v-v\cdot\nabla_x)\eta_s^N\|_{\bB^{\alpha-3,2}_{2;a,-1}}^2\lesssim
    \sup_{N\ge1}\sup_{s\in[0, T]}\mE\|\eta_s^N\|_{\bB^{\alpha,2}_{2;a}}^2<\infty,
\end{align}
and by Lemma \ref{Le2},
\begin{align}\label{eq:clt-uniform-interaction}
    \sup_{N\ge1}
    \int_0^T
    \mE\left\|\sqrt N
        \div_v
        \left(
        (K*\mu_t^N)\mu_t^N-(K*\mu_t)\mu_t
        \right)
    \right\|_{\bB^{\alpha-1,2}_{2;a}}^2\,\dif t
    <\infty .
\end{align}
On the other hand, since by \eqref{easy_emb} and \eqref{measure-emb}, $\nabla_v\delta_{Z_s^{N,i}}\in \bB^{\alpha-1,2}_{2;a}$ ,
by the Hilbert space-valued BDG inequality, we have for every $p\geq1$ and $0\le s\le t<\infty$,
\begin{align*}
\begin{split}
    \mE\|M_t^N-M_s^N\|_{\bB^{\alpha-1,2}_{2;a}}^{2p}\lesssim \frac{1}{N^p}\mE\left(\sum_{i=1}^N
    \int_s^t\|\nabla_v\delta_{Z^{N,i}_r}\|^2_{\bB^{\alpha-1,2}_{2;a}}\dif r\right)^p
    \lesssim |t-s|^p,
    \end{split}
\end{align*}
which implies by Kolmogorov's criterion that for any $\theta\in(0,1)$,
\begin{align}\label{eq:clt-mart-holder}
\begin{split}
     \sup_{N\ge1}
    \mE\left[\sup_{0\le s<t\le T}\frac{\|M_t^N-M_s^N\|_{\bB^{\alpha-1,2}_{2;a}}^{2}}{|t-s|^\theta}\right]<\infty.
    \end{split}
\end{align}
Combining \eqref{Eq2}, \eqref{0615:00}, \eqref{eq:clt-uniform-interaction} and
\eqref{eq:clt-mart-holder}, we obtain that for $\theta\in(0,1)$,
\begin{align}\label{eq:clt-time-modulus}
    \sup_{N\ge1}
    \mE\left(
    \sup_{0\le s<t\le T}
    \frac{\|\eta_t^N-\eta_s^N\|_{\bB^{\alpha-3,2}_{2;a,-1}}^2}{|t-s|^\theta}\right)
    <\infty.
\end{align}
Since $\gamma<\alpha$ and $\ell<0$, by \cite[Lemma A.3]{HZZZ22}, we have the compact embeddings 
$$
    \bB^{\alpha-3,2}_{2;a,-1}(\mR^{2d})
    \hookrightarrow
    \bB^{\gamma-3,2}_{2;a,\ell-1}(\mR^{2d}),\ \ \bB^{\alpha-1,2}_{2;a}(\mR^{2d})
    \hookrightarrow
    \bB^{\gamma-1,2}_{2;a,\ell}(\mR^{2d}).
$$
Thus, by \eqref{eq:clt-empirical-bound} and \eqref{eq:clt-time-modulus}, we have
$$
\mbox{$(\eta^N, M^N)_{N\ge1}$ is tight in $C([0,T]; \bB^{\gamma-3,2}_{2;a,\ell-1}(\mR^{2d}))\times C([0,T]; \bB^{\gamma-1,2}_{2;a,\ell}(\mR^{2d}))$}. 
$$

\emph{Step 2: identification of subsequential limits.}
Let $(\eta^{N_k},M^{N_k})$ be a convergent subsequence. 
By Skorohod's representation, on a new probability
space, we may suppose that
\begin{align}\label{Lim1}
    (\eta^{N_k}, M^{N_k})\to(\eta, M)
\hbox{ in }C([0,T]; \bB^{\gamma-3,2}_{2;a,\ell-1}(\mR^{2d}))\times C([0,T]; \bB^{\gamma-1,2}_{2;a,\ell}(\mR^{2d})),\ \ a.s.
\end{align}
The standard empirical-measure central limit theorem gives
$\eta_0^{N_k}\Rightarrow\eta_0$, where $\eta_0$ is the centered Gaussian field
with the covariance stated in the theorem. Moreover, by \eqref{eq:clt-empirical-bound} and lower semicontinuity,
$$
    \eta\in L^\infty([0,T];L^2(\Omega;\bB^{\alpha,2}_{2;a})).
$$
Now we want to take limits for both sides of \eqref{eq:clt-finite-equation}.
Let $\varphi\in C^1(\mR_+; C^\infty_c(\mR^{2d}))$.
First, the convergence in \eqref{Lim1} implies
$$
\int_0^t\langle\eta_s^{N_k},(\p_s+\Delta_v+v\cdot\nabla_x)\varphi_s\rangle\,\dif s\to
\int_0^t\langle\eta_s,(\p_s+\Delta_v+v\cdot\nabla_x)\varphi_s\rangle\,\dif s.
$$
Note that
\begin{align}\label{eq:clt-interaction-expansion}
&\sqrt {N_k}\left[
\langle(K*\mu_t^{N_k})\mu_t^{N_k},\nabla_v\varphi_t\rangle
-
\langle(K*\mu_t)\mu_t,\nabla_v\varphi_t\rangle
\right]\notag\\
&\quad =
\langle(K*\mu_t)\eta_t^{N_k},\nabla_v\varphi_t\rangle
+
\langle(K*\eta_t^{N_k})\mu_t,\nabla_v\varphi_t\rangle\no\\
&\qquad+
\sqrt {N_k}\langle (K*(\mu_t^{N_k}-\mu_t))(\mu_t^{N_k}-\mu_t),\nabla_v\varphi_t\rangle\nonumber\\
&\quad =
\langle\sA_{\mu_t}\eta_t^{N_k},\nabla_v\varphi_t\rangle
+\sqrt {N_k}\langle (K*(\mu_t^{N_k}-\mu_t))(\mu_t^{N_k}-\mu_t),\nabla_v\varphi_t\rangle .
\end{align}
By Lemma \ref{Le3}, we have
\begin{align}\label{eq:clt-remainder-vanishes}
    \sqrt N\,
    \int_0^T
    \mE\left|
    \left\langle
        \nabla_v\varphi_s,
        (K*(\mu_s^N-\mu_s))(\mu_s^N-\mu_s)
    \right\rangle
    \right|\,\dif s
    \longrightarrow0 .
\end{align}
Indeed, since $\mu_0^{N,N}=\mu_0^{\otimes N}$, Lemma~\ref{Le3} gives a
uniform bound on
$N\int_0^T\mE|\langle\nabla_v\varphi,
(K*(\mu_s^N-\mu_s))(\mu_s^N-\mu_s)\rangle|\,\dif s$, and
\eqref{eq:clt-remainder-vanishes} follows by division by $\sqrt N$.

We next pass to the limit in the linear terms. 
By Theorem~\ref{lem:mu-besov-apriori}, Lemma \ref{Le10}, and a standard localization argument,
the convergence \eqref{Lim1} gives
\[
    \int_\varepsilon^t \langle\sA_{\mu_s}\eta_s^{N_k},\nabla_v\varphi_s\rangle\,\dif s
    \longrightarrow
    \int_\varepsilon^t \langle\sA_{\mu_s}\eta_s,\nabla_v\varphi_s\rangle\,\dif s
    \qquad\text{a.s.}
\]
On the remaining interval $(0,\varepsilon)$, Lemma~\ref{Le10},
\eqref{eq:mu-besov-smoothing}, and the uniform bound
\eqref{eq:clt-empirical-bound} imply, for some $\delta\in(0,1)$,
\[
    \sup_k
    \mE\int_0^\varepsilon
    |\langle\sA_{\mu_s}\eta_s^{N_k},\nabla_v\varphi_s\rangle|\,\dif s
    +
    \mE\int_0^\varepsilon
    |\langle\sA_{\mu_s}\eta_s,\nabla_v\varphi_s\rangle|\,\dif s
    \lesssim
    \int_0^\varepsilon s^{-\delta}\,\dif s
    \stackrel{\varepsilon\downarrow0}{\longrightarrow}0 .
\]
Thus, as $k\to\infty$,
\begin{align}\label{eq:clt-linear-term-1}
    \int_0^t
\langle\sA_{\mu_s}\eta_s^{N_k},\nabla_v\varphi_s\rangle\,\dif s\to
\int_0^t
\langle\sA_{\mu_s}\eta_s,\nabla_v\varphi_s\rangle \,\dif s.
\end{align}
It remains to identify the noise and its joint law with the initial field.
Note that the quadratic variation process of continuous martingale
$M^{N,\varphi}$ is given by
\begin{align}\label{eq:clt-finite-cov}
    \langle M^{N,\varphi}\rangle_t
    =
    2\int_0^t
    \langle\mu_s^N,|\nabla_v\varphi_s|^2\rangle\,\dif s .
\end{align}
Since $\mu_s^N-\mu_s=N^{-1/2}\eta_s^N$, \eqref{eq:clt-empirical-bound} implies
that the right-hand side converges in probability, uniformly in $t$, to
\[
    2\int_0^t
    \langle\mu_s,|\nabla_v\varphi_s|^2\rangle\,\dif s .
\]
The stable martingale central limit theorem for continuous martingales then
gives, jointly with the initial empirical field,
\[
    (\eta_0^N,M^N)\Rightarrow(\eta_0,M),
\]
where $M$ is a centered Gaussian martingale field with covariance
\eqref{eq:clt-noise-covariance}. Since the Brownian motions are independent of
the i.i.d. initial variables and the limiting bracket is deterministic, this
stable convergence also gives that $\eta_0$ and $M$ are independent. Therefore
every subsequential limit is a martingale solution in the sense of
Definition~\ref{def:clt-mart-sol} with the joint law of $(\eta_0,M)$ specified
above.

\emph{Step 3: uniqueness.}
By linearity, uniqueness in law for the martingale problem follows once the
homogeneous equation with zero initial value and zero martingale part has only
the zero solution. Thus it suffices to show that $\eta=0$ is the only such
solution of
\begin{align}\label{eq:clt-homogeneous}
\p_t\eta_t
    =
    (\Delta_v-v\cdot\nabla_x)\eta_t-\div_v\bigl(\sA_{\mu_t}\eta_t\bigr).
\end{align}
By Duhamel's formula, we have
\begin{align}\label{eq:clt-duhamel-u}
\eta_t
&=-\int^t_0P_{t-s}^\ast\div_v\bigl(\sA_{\mu_s}\eta_s\bigr)\dif s
=-\int_0^tP_{t-s}^\ast\div_v
\bigl((K*\mu_s)\eta_s+\mu_s(K*\eta_s)\bigr)\,\dif s.
\end{align}
Let $\alpha\in(1-\beta-2d,-2d)$.
Fix $\varepsilon\in(0,\beta-1)$. By \eqref{easy_emb} and
Theorem~\ref{lem:mu-besov-apriori}, for $s\in(0,T]$,
\begin{align*}
\|\mu_s\|_{\bB^{2d+3-\beta,2}_{2;a}}
&\lesssim s^{-\frac{2-\beta+\varepsilon}{2}},
&
\|\mu_s\|_{\bB^{2d+1,2}_{2;a}}
&\lesssim s^{-\frac{\varepsilon}{2}}.
\end{align*}
By Lemma \ref{lem:kinetic-besov-semigroup}, \eqref{Lm1}, \eqref{Lm2} and Lemma \ref{lem:mu-besov-apriori}, we have
\begin{align*}
\|\eta_t\|_{\bB^{\alpha,2}_{2;a}}
&\lesssim \int^t_0\left[(t-s)^{-\frac12}\|(K*\mu_s)\eta_s\|_{\bB^{\alpha,2}_{2;a}}+(t-s)^{-\frac{3-\beta}2}\|(K*\eta_s)\mu_s\|_{\bB^{\alpha+\beta-2,2}_{2;a}}\right]\dif s\\
&\lesssim \int^t_0\left[(t-s)^{-\frac12}s^{-\frac{2-\beta+\varepsilon}{2}}+(t-s)^{-\frac{3-\beta}2}s^{-\frac{\varepsilon}{2}}\right]\|\eta_s\|_{\bB^{\alpha,2}_{2;a}}\dif s.
\end{align*}
The restriction $\varepsilon<\beta-1$ makes the two endpoint singularities integrable.
By Gr\"onwall's inequality of Volterra type, we obtain $\|\eta_t\|_{\bB^{\alpha,2}_{2;a}}=0$ for all $t\in[0,T]$. This proves uniqueness of the homogeneous problem and hence uniqueness in law for the martingale problem. The proof of Theorem~\ref{thm:clt} is complete.
\end{proof}

\subsection{Normal approximation}
In this subsection we derive a smooth-test normal-approximation rate and a
Berry--Esseen bound for finite-dimensional projections.
We first study the following backward Kolmogorov equation: for fixed $t>0$,
\begin{align}\label{eq:clt-backward-Q}
    \partial_s f_s
    +(\Delta_v+v\cdot\nabla_x)f_s+\sA^*_{\mu_s}\cdot\nabla_v f_s=0,
    \qquad
    f_t=\varphi ,
\end{align}
where $\mu_s$ is the law of solution $Z_s$ to SDE \eqref{SDE10} and $\sA_{\mu_s}$ is defined by \eqref{Def1}. We establish the following regularity estimate.
\begin{lemma}\label{Le49}
Under Assumption \ref{ass:clt}, for every $T>0$ there exists a constant
$C_T>0$ such that, for all $0<t\le T$ and
$\varphi\in \bB^{2d+2,2}_{2;a}$, PDE \eqref{eq:clt-backward-Q} has a unique
solution satisfying
    \begin{align}\label{supply:00}
        \sup_{s\in[0,t]}\|f_s\|_{\bB^{2d+2,2}_{2;a}}\le C_T
        \|\varphi\|_{\bB^{2d+2,2}_{2;a}}. 
    \end{align}
	To display the dependence on the terminal time $t$ and terminal value $\varphi$, we write
$$
Q_{s,t}\varphi=f_s.
$$
\end{lemma}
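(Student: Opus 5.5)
We prove Lemma~\ref{Le49} by duality with the forward linearized equation, through a Duhamel/mild formulation solved by Picard iteration in $\bB^{2d+2,2}_{2;a}$. First we identify the adjoint $\sA^*_{\mu}$. For a test function $g$ we have
\[
\langle \sA_\mu\eta, g\rangle=\langle \eta,(K*\mu)\cdot g\rangle+\langle \eta,\tilde K*(\mu g)\rangle,\qquad \tilde K(z):=K(-z),
\]
so that
\[
\sA^*_\mu\cdot\nabla_v f=(K*\mu)\cdot\nabla_v f+\tilde K*(\mu\nabla_v f).
\]
Writing $P_r$ for the kinetic semigroup generated by $\sK=\Delta_v+v\cdot\nabla_x$, the backward equation \eqref{eq:clt-backward-Q} is equivalent to the mild form
\[
f_s=P_{t-s}\varphi+\int_s^t P_{r-s}\big(\sA^*_{\mu_r}\cdot\nabla_v f_r\big)\,\dif r.
\]
We look for a fixed point of this equation in $L^\infty([0,t];\bB^{2d+2,2}_{2;a})$.

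\emph{Key bound.} The next step is to estimate, for $\alpha':=2d+2$ and some loss $\theta\in(0,2)$,
\[
\|\sA^*_{\mu_r}\cdot\nabla_v f\|_{\bB^{\alpha'-\theta,2}_{2;a}}\lesssim r^{-\delta}\|f\|_{\bB^{\alpha',2}_{2;a}},
\]
with $\delta<1$. This is the dual counterpart of Lemma~\ref{Le10}, and its two terms are handled separately.

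\begin{enumerate}[(i)]
\item \emph{The term $(K*\mu_r)\cdot\nabla_v f$.} Apply the product estimate \eqref{product}. Since $K\in\bB^{\beta-2,\infty}_{\infty;a}$ (Lemma~\ref{lem:kato-besov-embedding}), the convolution estimate \eqref{Besov-con} gives $\|K*\mu_r\|_{\bB^{\gamma,\infty}_{\infty;a}}\lesssim\|\mu_r\|_{\bB^{\gamma+2-\beta,\infty}_{1;a}}$. The right-hand side is controlled by Theorem~\ref{lem:mu-besov-apriori} with a singularity $r^{-\delta}$, $\delta<1$, once $\gamma$ is taken of order $2d+1$. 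Together with $\|\nabla_v f\|_{\bB^{2d+1,2}_{2;a}}\lesssim\|f\|_{\bB^{2d+2,2}_{2;a}}$ (Bernstein, Lemma~\ref{Le22}), this gives the bound with loss $\theta=1$.
\item \emph{The term $\tilde K*(\mu_r\nabla_v f)$.} Use \eqref{Besov-con} with $\tilde K\in\bB^{\beta-2,\infty}_{2;a}$ (the reflection is harmless) together with the $L^1$-type Besov norm of $\mu_r\nabla_v f$. The latter is bounded by \eqref{product} in terms of $\|\mu_r\|_{\bB^{\cdot,\infty}_{2;a}}$ and $\|\nabla_v f\|$ in $L^2$-based spaces, with Young exponents $1+1/2=1/2+1$. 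This gives a loss $\theta=3-\beta<2$.
\end{enumerate}

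Lemma~\ref{lem:kinetic-besov-semigroup} then shows that $P_{r-s}$ regains $\theta$ derivatives at the cost of $(r-s)^{-\theta/2}$, which is integrable since $\theta<2$. The change of summability index ($q=\infty$ to $q=2$) is absorbed by \eqref{easy_emb}, giving up an $\eps$ of regularity in the intermediate step.

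\emph{Conclusion and main obstacle.} The mild equation now yields a Volterra inequality
\[
\|f_s\|_{\bB^{2d+2,2}_{2;a}}\lesssim\|\varphi\|_{\bB^{2d+2,2}_{2;a}}+\int_s^t (r-s)^{-\theta/2}r^{-\delta}\|f_r\|_{\bB^{2d+2,2}_{2;a}}\,\dif r,
\]
exactly as in the uniqueness step of Theorem~\ref{thm:clt}. From it we proceed as follows.
\begin{enumerate}[(i)]
\item Picard iteration (contraction on short intervals, then concatenation) gives existence of the solution.
\item The Volterra-type Gronwall inequality gives the bound \eqref{supply:00}, with $C_T$ independent of $t\le T$.
\item Applying the same inequality to the difference of two solutions gives uniqueness.
\end{enumerate}
The main obstacle is bookkeeping the exponents so that the singularities $r^{-\delta}$, which come from the blow-up of $\mu_r$ at $r=0$ in Theorem~\ref{lem:mu-besov-apriori}, combine with the semigroup factor $(r-s)^{-\theta/2}$ to remain integrable. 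In particular, the $\mu_r$ norms needed must stay below order $2d+3-\beta+\eps$, as in the uniqueness proof. If a direct estimate at level $2d+2$ with $q=2$ fails, we would first prove the bound in $\bB^{2d+2+\eps,\infty}_{2;a}$ and $\bB^{2d+2-\eps,\infty}_{2;a}$ and then interpolate back to $q=2$.
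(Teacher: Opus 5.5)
Your proposal is correct and follows the paper's proof essentially step for step. Both use the same mild formulation, with $\sA^*_{\mu_r}\cdot\nabla_v f=(K*\mu_r)\cdot\nabla_v f+\widetilde K*(\mu_r\nabla_v f)$, the same two estimates with derivative losses $1$ and $3-\beta$, and the same Volterra--Gr\"onwall and Picard argument; the only difference is that the paper pairs $K\in\bB^{\beta-2,\infty}_{2;a}$ with $\mu_r$ in $L^2$-based norms in both terms, while you use the $L^\infty$/$L^1$ pairing in the first.

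Two points are worth stating explicitly. First, because the semigroup singularity and the $r^{-\delta}$ singularity merge at $s=0$, the condition you need is $\theta/2+\delta<1$ for each kernel, not merely $\delta<1$. Your exponents give $(3-\beta)/2<1$ for the first term and $(3-\beta+\varepsilon)/2<1$ for the second once $\varepsilon<\beta-1$, so they satisfy this. Second, Lemma~\ref{lem:kinetic-besov-semigroup} already maps $\bB^{\cdot,\infty}_{2;a}$ into $\bB^{\cdot,1}_{2;a}$, so neither the $\varepsilon$-loss for the summability index nor the interpolation fallback is needed.
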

\begin{proof}
Note that by Duhamel's formula,
\begin{align*}
    f_s=P_{t-s}\varphi +\int_s^t P_{r-s}(\sA^*_{\mu_r}\cdot\nabla_v f_r)\dif r.
\end{align*}
Let $\widetilde K(z):=K(-z)$.
Fix $\varepsilon\in(0,\beta-1)$. By \eqref{easy_emb} and
Theorem~\ref{lem:mu-besov-apriori}, for $r\in(0,t]$,
\begin{align*}
\|\mu_r\|_{\bB^{2d+3-\beta,2}_{2;a}}
&\lesssim r^{-\frac{2-\beta+\varepsilon}{2}},
&
\|\mu_r\|_{\bB^{2d+1,2}_{2;a}}
&\lesssim r^{-\frac{\varepsilon}{2}}.
\end{align*}
Then Lemma \ref{Le23} gives
\begin{align*}
\|f_s\|_{\bB^{2d+2,2}_{2;a}}\lesssim&\|\varphi\|_{\bB^{2d+2,2}_{2;a}}+\int_s^t (r-s)^{-\frac12}\|(K*\mu_r)\cdot\nabla_v f_r\|_{\bB^{2d+1,2}_{2;a}}\dif r\\
    &+\int_s^t (r-s)^{-\frac{3-\beta}{2}}\|\widetilde K*(\mu_r\nabla_v f_r)\|_{\bB^{2d+\beta-1,2}_{2;a}}\dif r\\
    \lesssim&\|\varphi\|_{\bB^{2d+2,2}_{2;a}}+\int_s^t (r-s)^{-\frac12}\|K\|_{\bB^{\beta-2,\infty}_{2;a}}\|\mu_r\|_{\bB^{2d+3-\beta,2}_{2;a}}\|f_r\|_{\bB^{2d+2,2}_{2;a}}\dif r\\
    &+\int_s^t (r-s)^{-\frac{3-\beta}{2}}\|K\|_{\bB^{\beta-2,\infty}_{2;a}}\|\mu_r\|_{\bB^{2d+1,2}_{2;a}}\|f_r\|_{\bB^{2d+2,2}_{2;a}}\dif r\\
    &\lesssim \|\varphi\|_{\bB^{2d+2,2}_{2;a}}
    +\int_s^t (r-s)^{-\frac12}r^{-\frac{2-\beta+\varepsilon}{2}}
        \|f_r\|_{\bB^{2d+2,2}_{2;a}}\dif r\\
    &\quad+\int_s^t (r-s)^{-\frac{3-\beta}{2}}r^{-\frac{\varepsilon}{2}}
        \|f_r\|_{\bB^{2d+2,2}_{2;a}}\dif r,
\end{align*}
The restrictions $\varepsilon<\beta-1$ and $\beta\in(1,2)$ make
both Volterra kernels integrable at their endpoints. Picard iteration in
$C([0,t];\bB^{2d+2,2}_{2;a})$, followed by the corresponding generalized
Gr\"onwall inequality, gives existence, uniqueness, and the uniform estimate
\eqref{supply:00}.
\end{proof}
We first prove a smooth-test normal-approximation bound.
\begin{theorem}\label{thm:smooth-weak-rate}
Under the assumptions of Theorem \ref{thm:clt}, for any $m\in\N$ and
$\varphi_1,\ldots,\varphi_m\in C_c^\infty(\mR^{2d})$, there exists a
constant $C=C(T,m,\varphi_1,\ldots,\varphi_m,K,\mu_0)>0$, independent of $N$,
such that, for every $F\in C_b^2(\R^m)$,
\begin{align}\label{eq:weak-rate}
\begin{split}
&\sup_{t\in[0,T]}\abs{\E F\bigl(\langle{\eta_t^N},{\varphi_1}\rangle,\ldots,\langle{\eta_t^N},{\varphi_m}\rangle\bigr)
    -\E F\bigl(\langle{\eta_t},{\varphi_1}\rangle,\ldots,\langle{\eta_t},{\varphi_m}\rangle\bigr)}\\
    &\qquad\qquad\le C N^{-1/2}\left(\norm{\nabla F}_{L^\infty(\R^m)}+
\|{\nabla^2F}\|_{L^\infty(\R^m)}\right).
\end{split}
\end{align}
\end{theorem}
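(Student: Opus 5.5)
We will follow a Lindeberg/Stein-type interpolation along the linearized backward flow $Q_{s,t}$ of Lemma~\ref{Le49}. Fix $t\in(0,T]$ and set $f^k_s:=Q_{s,t}\varphi_k$, $k=1,\dots,m$, which by \eqref{supply:00} are bounded in $\bB^{2d+2,2}_{2;a}$ uniformly in $s\le t$. This space embeds into bounded functions with bounded $\nabla_v$ ($2d+2>4d\cdot\frac12+1$), so Lemma~\ref{Le3} applies to $f^k$. By \eqref{eq:linearized-clt-weak} with test function $f^k$, $\langle\eta_t,\varphi_k\rangle=\langle\eta_0,f^k_0\rangle+M_t^{f^k}$. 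Hence $Y_t$ is Gaussian with covariance $\Sigma_t=\Sigma^0+\Sigma^M$, where $\Sigma^0_{kl}=\mathrm{Cov}_{\mu_0}(f^k_0,f^l_0)$ and $\Sigma^M_{kl}=2\int_0^t\langle\mu_r,\nabla_vf^k_r\cdot\nabla_vf^l_r\rangle\,dr$.

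For the particle system, apply It\^o's formula to $\langle\eta^N_s,f^k_s\rangle$ using \eqref{eq:clt-finite-equation}. The backward equation \eqref{eq:clt-backward-Q} cancels the transport and linear term $\langle\sA_{\mu_s}\eta^N_s,\nabla_vf^k_s\rangle$ from \eqref{eq:clt-interaction-expansion}, so
\[
Y^{N,k}_t=\langle\eta^N_0,f^k_0\rangle+M^{N,f^k}_t+R^{N,k}_t,\qquad R^{N,k}_t:=\int_0^t\sqrt N\langle(K*(\mu^N_s-\mu_s))(\mu^N_s-\mu_s),\nabla_vf^k_s\rangle\,ds .
\]
Lemma~\ref{Le3} with $\mu_0^{N,N}=\mu_0^{\otimes N}$ gives $\mE|R^{N,k}_t|\lesssim N^{-1/2}$ uniformly in $t$. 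Therefore $|\mE F(Y^N_t)-\mE F(\widetilde Y^N_t)|\le\|\nabla F\|_\infty\,\mE|R^N_t|\lesssim N^{-1/2}\|\nabla F\|_\infty$, where $\widetilde Y^N_t:=(\langle\eta^N_0,f^k_0\rangle+M^{N,f^k}_t)_k$. Uniformity of the constants in $t\le T$ comes from Lemma~\ref{Le49}.

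It remains to compare $\widetilde Y^N_t$ with $Y_t$. Write $A^N=(\langle\eta^N_0,f^k_0\rangle)_k$, an i.i.d. normalized sum of bounded vectors. Let $G\sim N(0,\Sigma^M)$ be independent of everything, and define $u(y):=\mE F(y+G)$, $v(s,y):=\mE F(y+G_s)$ with $G_s\sim N(0,\Sigma^M-\Sigma^M_{[0,s]})$, where $\Sigma^M_{[0,s]}$ is the covariance accumulated on $[0,s]$. Then $v$ solves the backward heat equation $\partial_sv+\tfrac12\mathrm{tr}(\dot\Sigma^M_s\nabla^2v)=0$, and $\|\nabla^jv\|\le\|\nabla^jF\|$ for $j=1,2$. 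Apply It\^o's formula to $v(s,A^N+M^{N}_s)$ on $[0,t]$ (this is a continuous martingale with bracket $2\int\langle\mu^N_r,\nabla_vf^k_r\cdot\nabla_vf^l_r\rangle\,dr$). The drift is
\[
\tfrac12\mathrm{tr}\big(\nabla^2v\,(d\langle M^N\rangle_s-\dot\Sigma^M_s\,ds)\big),
\]
so the error is at most $\|\nabla^2F\|_\infty\,\mE\int_0^t|\langle\mu^N_r-\mu_r,\nabla_vf^k_r\cdot\nabla_vf^l_r\rangle|\,dr$. This equals $N^{-1/2}\mE|\langle\eta^N_r,g\rangle|$ with $g$ bounded in $\bB^{2d+1,2}_{2;a}$, hence it is $\lesssim N^{-1/2}$ by Lemma~\ref{Le1}, since $\bB^{2d+1,2}_{2;a}$ is dual to some $\bB^{\alpha,2}_{2;a}$ with $\alpha<-2d$.

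This leaves $|\mE u(A^N)-\mE u(A)|$ with $A\sim N(0,\Sigma^0)$ independent of $M$. This is a classical multivariate CLT for i.i.d. bounded summands, which by the standard Lindeberg replacement gives a rate $N^{-1/2}$. The obstacle is that the smooth-test rate needs third derivatives. Here we gain them from the Gaussian smoothing in $u$ only when $\Sigma^M$ is nondegenerate, which is not given. So we will first try Stein's method for sums of independent vectors with the exchangeable-pair/leave-one-out expansion: a second-order Taylor expansion of $u$ around the leave-one-out sum $A^N-N^{-1/2}\xi_i$ has matched first and second moments, and the remainder $\mE|\xi_i|^2\,|\nabla^2u(\cdot+\theta N^{-1/2}\xi_i)-\nabla^2u(\cdot)|$ cannot be made $O(N^{-1/2})$ with only $\nabla^2F$ bounded. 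We will therefore replace $F$ by its Gaussian solution of Stein's equation $\mathcal L h=F-\mE F(A)$ in the $\Sigma^0$-directions, whose second derivatives are controlled by $\|\nabla^2F\|$ and whose third derivatives are controlled in the averaged (Lipschitz-in-one-direction) sense of the known multivariate Stein bounds. Alternatively, we can treat the initial field dynamically as well, via a Lindeberg swap one particle at a time. The interaction-free structure of $A^N$ is what makes this last step a purely classical, if delicate, ingredient.

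Finally, the joint treatment requires independence of $A^N$ from the Brownian motions, which holds for i.i.d. initial data, and the constant depends on $m$ through the sums over $k,l$.
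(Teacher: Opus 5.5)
Your proposal follows the paper's proof step for step. The backward flow $f^k_s=Q_{s,t}\varphi_k$ cancels the transport and $\sA_{\mu_s}$ terms, Lemma~\ref{Le3} handles the quadratic remainder, and Lemma~\ref{Le1} with $\alpha=-2d-1$ handles the bracket mismatch. Your $v(s,\cdot)$ is exactly the paper's $u_s$; removing $R^N_t$ first via $\|\nabla F\|_\infty$, instead of keeping $b^N$ inside It\^o's formula, is cosmetic. The one step you leave open is the initial-field term $|\mE u(A^N)-\mE u(A)|$, the paper's $I_0^N$. The paper dispatches it in one line by citing a multivariate Berry--Esseen bound applied on the range of the covariance. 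Your Stein plan costs $\|\nabla^2F\|_\infty$ rather than $\|\nabla F\|_\infty$, which is still enough for \eqref{eq:weak-rate}, and it does close the argument. However, the vague ``averaged'' third-derivative control should be replaced by a concrete directional bound, as follows.

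Write $\xi_i:=(f^k_0(Z^{N,i}_0)-\langle\mu_0,f^k_0\rangle)_{k\le m}$. Then $|\xi_i|\le B$, with $B\lesssim\max_k\|\varphi_k\|_{\bB^{2d+2,2}_{2;a}}$ uniformly in $t\le T$ by \eqref{supply:00}.

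\emph{Reduction to the range.} For $e\in\ker\Sigma^0$, $\langle e,\xi_i\rangle$ is centered with zero variance. Hence $A^N$ and $A$ both live on $\mathrm{range}\,\Sigma^0$, and you can solve Stein's equation there for $u$ (not $F$) with target $N(0,\Sigma^0)$.

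\emph{Directional derivative gain.} Let $e_c$ be an eigenvector with eigenvalue $\lambda_c>0$. One Gaussian integration by parts in the Ornstein--Uhlenbeck representation of $h$ gives $|\partial_{e_a}\partial_{e_b}\partial_{e_c}h|\lesssim\lambda_c^{-1/2}\|\nabla^2u\|_\infty$. Pair this with $\mE|\langle\xi_i,e_c\rangle|\le\lambda_c^{1/2}$ and $|\xi_i|\le B$. Then every third-order term in the leave-one-out expansion is $O(m^3B^2\|\nabla^2F\|_\infty)$ per summand, whence $|\mE u(A^N)-\mE u(A)|\lesssim m^3B^2N^{-1/2}\|\nabla^2F\|_\infty$.

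\emph{Uniformity in $t$.} The directional version matters for the $\sup_{t\in[0,T]}$. Lemma~\ref{Le49} controls $B$ uniformly in $t$, but it gives no lower bound on the smallest positive eigenvalue of $\Sigma^0(t)$. So a normalized Stein or Berry--Esseen bound carrying $\|(\Sigma^0)^{-1/2}\|$ would not by itself yield uniformity in $t$. That includes the standard known bound and the normalized estimate the paper cites. Your remark that uniformity comes from Lemma~\ref{Le49} is justified only for the remainder and bracket terms.
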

\begin{proof}
Fix $t\le T$ and $\varphi_1,\ldots,\varphi_m\in C_c^\infty(\mR^{2d})$. 
For each $i=1,\cdots, m$, let $f_s^i$ solve the following backward PDE:
\begin{equation}\label{eq:backward-cancel}
    \partial_s f_s^i+(\Delta_v+v\cdot\nabla_x)f^i_s+\sA_{\mu_s}^*\cdot\nabla_v f_s^i=0,\ s\in[0,t],\ \ f_t^i=\varphi_i.
\end{equation}
By Lemma \ref{Le49} and Bernstein's inequality,
\begin{align}\label{Dh1}
\sup_{s\in[0,t]}\|\nabla_v f^i_s\|_{L^\infty}\lesssim
\|\varphi_i\|_{\bB^{2d+2,2}_{2;a}}.
\end{align}
Define the finite-dimensional processes
$$
    X_s^{N,i}:=\ip{\eta_s^N}{f_s^i},
    \qquad
    X_s^i:=\ip{\eta_s}{f_s^i},
    \qquad 0\le s\le t.
$$
By It\^o's formula, we have
\begin{equation}\label{eq:XN-dynamics}
    \dif X_s^{N,i}=b_s^{N,i}\dif s+\dif M_s^{N,f^i},
    \end{equation}
where 
$$
b_s^{N,i}:=\sqrt N\,\ip{(K*(\mu_s^N-\mu_s))(\mu_s^N-\mu_s)}{\nabla_v  f_s^i},
$$
Moreover, by \eqref{eq:linearized-clt-weak} we also have
\begin{equation}\label{eq:X-dynamics}
    \dif X_s^i=\dif M_s^{f^i}.
\end{equation}
By \eqref{Dh1} and Lemma \ref{Le3}, we have
\begin{equation}\label{eq:b-small}
    \int_0^t\E\abs{b_s^{N,i}}\dif s
    \le C_{T,\varphi_i}N^{-1/2}.
\end{equation}
Note that the quadratic covariations are
$$
    \dif\langle M^{N,f^i}, M^{N,f^j}\rangle_s
    =\cC_s^{N,ij}\dif s,
    \quad
    \cC_s^{N,ij}:=2\ip{\mu_s^N}{\nabla_v  f_s^i\cdot\nabla_v  f_s^j},
$$
and
$$
    \dif\langle M^{f^i}, M^{f^j}\rangle_s
    =\cC_s^{ij}\dif s,
    \quad
    \cC_s^{ij}:=2\ip{\mu_s}{\nabla_v  f_s^i\cdot\nabla_v  f_s^j}.
$$
For the following estimate, fix $\alpha=-2d-1$.
Therefore,
\begin{align*}
    \int_0^t\E\abs{\cC_s^{N,ij}-\cC_s^{ij}}\dif s
    &\le 2N^{-1/2}\int_0^t
        \E\abs{\ip{\eta_s^N}{\nabla_v  f_s^i\cdot\nabla_v  f_s^j}}\dif s \notag \\
    &\le 2N^{-1/2}\sup_{s\in[0,t]}
        \norm{\nabla_v  f_s^i\cdot\nabla_v  f_s^j}_{\bB^{-\alpha,2}_{2;a}}
        \int_0^t\E\norm{\eta_s^N}_{\bB^{\alpha,2}_{2;a}}\dif s.
\end{align*}
By the product estimate, Bernstein's inequality in Lemma \ref{Le22},
and \eqref{supply:00},
\begin{align*}
    \lVert \nabla_vf^i_s\cdot \nabla_v f^j_s \rVert_{\bB^{2d+1,2}_{2;a}} &\lesssim \lVert \nabla_v f^i_s \rVert_{\bB^{2d+1,2}_{2;a}} \lVert \nabla_v f^j_s \rVert_{\infty} + \lVert \nabla_v f^i_s \rVert_{\infty} \lVert \nabla_v f^j_s \rVert_{\bB^{2d+1,2}_{2;a}} \\
    &\lesssim \lVert f^i_s \rVert_{\bB^{2d+2,2}_{2;a}} \lVert \nabla_v f^j_s \rVert_{\bB^{2d,1}_{2;a}} + \lVert \nabla_v f^i_s \rVert_{\bB^{2d,1}_{2;a}} \lVert f^j_s \rVert_{\bB^{2d+2,2}_{2;a}} \\
    &\lesssim \lVert f^i_s \rVert_{\bB^{2d+2,2}_{2;a}} \lVert f^j_s \rVert_{\bB^{2d+2,2}_{2;a}} \\
    &\lesssim \lVert \varphi_i \rVert_{\bB^{2d+2,2}_{2;a}} \lVert \varphi_j \rVert_{\bB^{2d+2,2}_{2;a}}.
\end{align*}
Hence, by \eqref{eq:clt-empirical-bound},
\begin{align}\label{eq:bracket-small}
    \int_0^t\E\abs{\cC_s^{N,ij}-\cC_s^{ij}}\dif s
\le C_{T,\varphi_i,\varphi_j}N^{-1/2}.
\end{align}

Next, we let $\cC_s=(\cC_s^{ij})_{1\le i,j\le m}$ and
$\cC_s^N=(\cC_s^{N,ij})_{1\le i,j\le m}$. The matrix $\cC_s$ is symmetric and non-negative. Choose a measurable square root $\sigma_s$ with $\sigma_s\sigma_s^\top=\cC_s$. Let $B$ be a standard $m$-dimensional Brownian motion and define, for $0\le s\le t$ and $x\in\R^m$,
\begin{equation}\label{eq:u-definition}
    u_s(x):=\E\left[F\left(x+\int_s^t\sigma_r\dif B_r\right)\right].
\end{equation}
Then $u$ solves
\begin{equation}\label{eq:finite-kolmogorov}
    \partial_su_s+\tfrac12\tr(\cC_s\cdot\nabla^2 u_s)=0,
    \quad u_t=F.
\end{equation}
Moreover, differentiating under the expectation in \eqref{eq:u-definition} yields
\begin{equation}\label{eq:u-derivative-bounds}
    \norm{\nabla u}_{L^\infty([0,t]\times\R^m)}\le
    \norm{\nabla F}_{L^\infty(\R^m)},
    \quad
    \norm{\nabla^2u}_{L^\infty([0,t]\times\R^m)}\le
    \norm{\nabla^2F}_{L^\infty(\R^m)}.
\end{equation}

Now, write 
$$
X_s^N=(X_s^{N,1},\ldots,X_s^{N,m}),\ \ X_s=(X_s^1,\ldots,X_s^m).
$$
Applying Ito's formula to $u_s(X_s^N)$ and using \eqref{eq:finite-kolmogorov} gives
\begin{align}\label{eq:ito-N}
    \E F(X_t^N)
    &=\E u_0(X_0^N)
      +\E\int_0^t\nabla u_s(X_s^N)\cdot b_s^N\dif s+\frac12\E\int_0^t
      \tr\Big(\nabla^2u_s(X_s^N)\cdot
      \bigl(\cC_s^{N}-\cC_s\bigr)\Big)\dif s,
\end{align}
where $b_s^N=(b_s^{N,1},\ldots,b_s^{N,m})$.
For the limiting process, by \eqref{eq:X-dynamics}, we have
\begin{equation}\label{eq:ito-limit}
    \E F(X_t)=\E u_0(X_0).
\end{equation}
Subtracting \eqref{eq:ito-limit} from \eqref{eq:ito-N}, we obtain
\begin{equation}\label{eq:three-terms}
    \abs{\E F(X_t^N)-\E F(X_t)}
    \le I_0^N+I_1^N+I_2^N,
\end{equation}
where
$$
    I_0^N:=\abs{\E u_0(X_0^N)-\E u_0(X_0)},
$$
$$
    I_1^N:=\E\int_0^t\abs{\nabla u(s,X_s^N)\cdot b_s^N}\dif s,
$$
and
$$
    I_2^N:=\frac12\E\int_0^t
    \abs{\tr\Big(\nabla^2u_s(X_s^N)\cdot
      \bigl(\cC_s^{N}-\cC_s\bigr)\Big)}\dif s.
$$
The vector $X_0^N$ is a normalized sum of i.i.d. centered bounded
random vectors, whereas $X_0$ is the centered Gaussian vector with the same
covariance. The uniform estimate \eqref{supply:00} makes the bounds on these
summands uniform over $t\in[0,T]$. Hence the standard multivariate
smooth-test Berry--Esseen estimate \cite{Raic19}, applied on the range of the
covariance matrix when it is singular, and \eqref{eq:u-derivative-bounds} give
\begin{equation}\label{eq:I0-bound}
    I_0^N\le C_{T,\boldsymbol\varphi}\|\nabla F\|_\infty N^{-1/2}.
\end{equation}
Second, by \eqref{eq:u-derivative-bounds} and \eqref{eq:b-small},
\begin{align}\label{eq:I1-bound}
    I_1^N
    \le \norm{\nabla F}_{L^\infty}
        \int_0^t\E\abs{b_s^{N}}\dif s \le C_{T,\boldsymbol\varphi}\norm{\nabla F}_{L^\infty}N^{-1/2}.
\end{align}
Third, using \eqref{eq:u-derivative-bounds} and \eqref{eq:bracket-small},
\begin{align}\label{eq:I2-bound}
    I_2^N
    \le \frac12\norm{\nabla^2F}_{L^\infty}
\int_0^t
        \E\|\cC_s^{N}-\cC_s\|_{\mathrm F}\dif s
    \le C_{T,\boldsymbol\varphi}\norm{\nabla^2F}_{L^\infty}N^{-1/2}.
\end{align}
Combining \eqref{eq:three-terms}, \eqref{eq:I0-bound}, \eqref{eq:I1-bound}, 
and \eqref{eq:I2-bound}, we obtain \eqref{eq:weak-rate}. 
\end{proof}

The following lemma is a direct consequence of Lemma \ref{Le49} and the definition of solution to SPDE \eqref{eq:linearized-clt-formal}.
\begin{lemma}\label{prop:clt-ou-covariance}
Under the assumptions of Theorem~\ref{thm:clt}, for each $\phi\in C^\infty_c(\mR^{2d})$ and $t\geq 0$,
\begin{align}\label{eq:clt-eta-variance}
   \langle\eta_t,\phi\rangle\sim \mathcal N\left(0,
    \langle\mu_0,|Q_{0,t}\phi|^2\rangle
    -
    \langle\mu_0,Q_{0,t}\phi\rangle^2+
    2\int_0^t
        \langle\mu_s,|\nabla_v Q_{s,t}\phi|^2\rangle\,\dif s\right).
\end{align}
\end{lemma}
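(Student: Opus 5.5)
Fix $t\in(0,T]$ and $\phi\in C^\infty_c(\mR^{2d})$, and write $f_s:=Q_{s,t}\phi$, $s\in[0,t]$, for the solution of the backward equation \eqref{eq:clt-backward-Q} given by Lemma~\ref{Le49}. Since $\phi\in\bB^{2d+2,2}_{2;a}$, the bound \eqref{supply:00} gives $f\in L^\infty([0,t];\bB^{2d+2,2}_{2;a})$. We first check that $f$ is an admissible test function in Definition~\ref{def:clt-mart-sol}. If $f$ is not literally $C^1$ in time, we mollify $\phi$ and the coefficient $\mu_s$ in time, and we use the continuity of $\langle\eta_s,\cdot\rangle$ on $\bB^{2d+2,2}_{2;a}\subset(\bB^{\alpha,2}_{2;a})^*$ for $\alpha\in(-2d-2,-2d)$ to pass to the limit. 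We then plug $\varphi_s=f_s$ into \eqref{eq:linearized-clt-weak}. By \eqref{eq:clt-backward-Q}, the drift $(\p_s+\Delta_v+v\cdot\nabla_x+\sA^*_{\mu_s}\cdot\nabla_v)f_s$ vanishes identically. This yields
\[
\langle\eta_t,\phi\rangle=\langle\eta_0,Q_{0,t}\phi\rangle+M^{f}_t .
\]
This is exactly the cancellation already used in \eqref{eq:X-dynamics}.

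Next we identify the two summands. By Theorem~\ref{thm:clt}, $\eta_0$ is the centered Gaussian field with covariance \eqref{initial:cor}. Hence $\langle\eta_0,Q_{0,t}\phi\rangle$ is centered Gaussian with variance $\langle\mu_0,|Q_{0,t}\phi|^2\rangle-\langle\mu_0,Q_{0,t}\phi\rangle^2$. Formula \eqref{initial:cor} is stated for $C^\infty_c$ functions, so we extend it to $Q_{0,t}\phi\in\bB^{2d+2,2}_{2;a}$ by density. For this we use the $L^2(\Omega)$-continuity of $\varphi\mapsto\langle\eta_0,\varphi\rangle$ coming from \eqref{Re1}. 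The right-hand side is continuous as well, because $\bB^{2d+2,2}_{2;a}\hookrightarrow L^\infty$ by Bernstein's inequality. Limits in $L^2$ of centered Gaussians are centered Gaussian, so the Gaussian property also passes to the limit.

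For the martingale term, $M^f$ is a centered continuous Gaussian martingale. By \eqref{eq:clt-noise-covariance} with $\varphi=\psi=f$, its variance at time $t$ is $2\int_0^t\langle\mu_s,|\nabla_v Q_{s,t}\phi|^2\rangle\dif s$. This quantity is finite by \eqref{Dh1}. Since the martingale field is independent of $\eta_0$, the sum of the two terms is centered Gaussian and the variances add. This gives \eqref{eq:clt-eta-variance}.

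The main obstacle is the admissibility step. We must justify that the time-dependent test function $Q_{\cdot,t}\phi$, which is only as regular in time as $s\mapsto\mu_s$ and may be singular near $s=0$ through the bound \eqref{eq:mu-besov-smoothing}, can be used in \eqref{eq:linearized-clt-weak}. We plan to handle this by approximation. We apply the identity to smooth approximations $\varphi^n$ with $\varphi^n\to f$ in $L^\infty([0,t];\bB^{2d+2,2}_{2;a})$ and with residual drift $(\p_s+\sK+\sA^*_{\mu_s}\cdot\nabla_v)\varphi^n_s\to0$ in $L^1([0,t];\bB^{-\alpha,2}_{2;a})$. The limit is then controlled by the integrable Volterra weights appearing in the proof of Lemma~\ref{Le49} together with the bound \eqref{Re1}.
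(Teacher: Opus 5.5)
Your proposal is correct and follows the paper's proof. You test \eqref{eq:linearized-clt-weak} against $f_s=Q_{s,t}\phi$ so the drift cancels, giving $\langle\eta_t,\phi\rangle=\langle\eta_0,Q_{0,t}\phi\rangle+M^f_t$, then add the variances of these two independent centered Gaussians; your extra work on admissibility of $Q_{\cdot,t}\phi$ and on extending \eqref{initial:cor} by density fills in points the paper passes over silently.
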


\begin{proof}
Let $f$ solve the backward Kolmogorov equation \eqref{eq:clt-backward-Q} with final value $f_t=\phi$. 
Applying \eqref{eq:linearized-clt-weak} to $\varphi_s=f_s=Q_{s,t}\phi$ yields
$$
\langle\eta_t,\phi\rangle
    =
    \langle\eta_0,f_0\rangle
    +M_t^f.
$$
The asserted variance follows from the covariance of $\eta_0$, the covariance
of $M^f$, and their independence.
\end{proof}

We finally give a Berry--Esseen bound for finite-dimensional projections of $\eta^N_t$.
\bt[Berry--Esseen estimate]\label{thm:berry-esseen}
Let $m\in\mathbb N$ and 
$\varphi_1,\ldots,\varphi_m\in C^\infty_c(\mR^{2d})$. For $t\in[0,T]$, define
\begin{equation*}
Y_t^N
:=
\big(
\langle \eta_t^N,\varphi_1\rangle,\ldots,
\langle \eta_t^N,\varphi_m\rangle
\big),\quad Y_t
:=
\big(
\langle \eta_t,\varphi_1\rangle,\ldots,
\langle \eta_t,\varphi_m\rangle
\big),
\end{equation*}
and for each $k=1,\ldots,m$,
\begin{equation*}
\sigma_{t,k}^2
:=
    \langle\mu_0,|Q_{0,t}\varphi_k|^2\rangle
    -
    \langle\mu_0,Q_{0,t}\varphi_k\rangle^2
    +
    2\int_0^t
        \langle\mu_r,|\nabla_v Q_{r,t}\varphi_k|^2\rangle\,\dif r
    >0.
\end{equation*}
Then there is a constant $C>0$, depending on
$t,m,\varphi_1,\ldots,\varphi_m,K$, $\mu_0$, and
$\sum_{k=1}^m\sigma_{t,k}^{-1}$, such that
\begin{align}\label{Berry-Esseen-m}
\sup_{x\in\mR^m}
\left|
\mathbb P(Y_t^N\le x)-\mathbb P(Y_t\le x)
\right|
\le C N^{-1/6},
\end{align}
where for $x=(x_1,\cdots,x_m)\in\mR^m$ and $y=(y_1,\cdots,y_m)\in\mR^m$,
$$
y\le x\quad\Longleftrightarrow\quad y_k\le x_k\quad\text{for every } k=1,\ldots,m.
$$
\et

\begin{proof}
Let $\chi\in C^\infty_b(\mR)$ satisfy
$$
0\leq \chi\leq 1,\ \ \chi(r)=1
\quad\text{for } r\le 0,
\quad
\chi(r)=0
\quad\text{for } r\ge 1.
$$
For fixed $x\in\mR^m$ and $0<\varepsilon\le 1$, define
\begin{equation*}
h_{x,\varepsilon}^+(y)
:=
\prod_{k=1}^m
\chi\left(
\frac{y_k-x_k}{\varepsilon}
\right),\quad
h_{x,\varepsilon}^-(y)
:=
\prod_{k=1}^m
\chi\left(
\frac{y_k-x_k+\varepsilon}{\varepsilon}
\right).
\end{equation*}
Then
\begin{equation*}
h_{x,\varepsilon}^+,h_{x,\varepsilon}^-
\in C_b^\infty(\mR^m)
\subset C_b^2(\mR^m).
\end{equation*}
Moreover, it is easy to see that
\begin{equation*}
\mathbf 1_{\{y\le x\}}
\le
h_{x,\varepsilon}^+(y)
\le
\mathbf 1_{\{y\le x+\varepsilon\}},\quad
\mathbf 1_{\{y\le x-\varepsilon\}}
\le
h_{x,\varepsilon}^-(y)
\le
\mathbf 1_{\{y\le x\}},
\end{equation*}
and for some universal constant $C_\chi>0$,
$$
\|\nabla h_{x,\varepsilon}^{\pm}\|_\infty\leq C_{\chi}m\eps^{-1},\quad 
\|\nabla^2 h_{x,\varepsilon}^{\pm}\|_\infty\leq C_{\chi}m^2\eps^{-2}.
$$
We first obtain an upper bound for the difference of the distribution
functions.
Using \eqref{eq:weak-rate}, we have
\begin{align*}
\mathbb P(Y_t^N\le x)
&\le
\mathbb E h_{x,\varepsilon}^+(Y_t^N)
\leq\mathbb E h_{x,\varepsilon}^+(Y_t)+
|\mathbb E h_{x,\varepsilon}^+(Y_t^N)-\mathbb E h_{x,\varepsilon}^+(Y_t)|
\nonumber\\
&\le
\mathbb E h_{x,\varepsilon}^+(Y_t)
+
C_1 N^{-1/2}(\|\nabla h_{x,\varepsilon}^+\|_{L^\infty}
+\|\nabla^2 h_{x,\varepsilon}^+\|_{L^\infty})
\nonumber\\
&\le
\mathbb P(Y_t\le x+\varepsilon)
+
C_1 C_\chi m^2N^{-1/2}\varepsilon^{-2}.
\end{align*}
Therefore,
\begin{align*}
\mathbb P(Y_t^N\le x)-\mathbb P(Y_t\le x)
&\le
\mathbb P(Y_t\le x+\varepsilon)
-
\mathbb P(Y_t\le x)
+
C_1 C_\chi m^2N^{-1/2}\varepsilon^{-2}.
\end{align*}
Since the $k$-th component $Y_t^{(k)}=\langle\eta_t,\varphi_k\rangle$
is Gaussian with variance $\sigma_{t,k}^2>0$,
we have
\begin{align*}
\mathbb P(Y_t\le x+\varepsilon)
-
\mathbb P(Y_t\le x)
&\le
\sum_{k=1}^m
\mathbb P(x_k<Y_t^{(k)}\le x_k+\varepsilon)
\le
\frac{\varepsilon}{\sqrt{2\pi}}
\sum_{k=1}^m\frac1{\sigma_{t,k}}=:C_2\eps.
\end{align*}
Consequently,
\begin{equation*}
\mathbb P(Y_t^N\le x)-\mathbb P(Y_t\le x)
\le
C_2\varepsilon
+C_1 C_\chi m^2N^{-1/2}\varepsilon^{-2}.
\end{equation*}
For the opposite direction, we use $h_{x,\varepsilon}^-$. First,
\begin{align*}
\mathbb P(Y_t\le x)
&\le
\mathbb P(Y_t\le x-\varepsilon)
+
\mathbb P(Y_t\le x)-\mathbb P(Y_t\le x-\varepsilon)
\le
\mathbb E h_{x,\varepsilon}^-(Y_t)
+C_2\varepsilon.
\end{align*}
By \eqref{eq:weak-rate} again, we also have
\begin{align*}
\mathbb E h_{x,\varepsilon}^-(Y_t)
&\le
\mathbb E h_{x,\varepsilon}^-(Y_t^N)
+
    C_1 N^{-1/2}\bigl(\|\nabla h_{x,\varepsilon}^-\|_{L^\infty}
    +\|\nabla^2 h_{x,\varepsilon}^-\|_{L^\infty}\bigr)
\nonumber\\
&\le
\mathbb P(Y_t^N\le x)
+
C_1 C_\chi m^2N^{-1/2}\varepsilon^{-2}.
\end{align*}
Hence
\begin{equation*}
\mathbb P(Y_t\le x)-\mathbb P(Y_t^N\le x)
\le
C_2\varepsilon
+
C_1 C_\chi m^2N^{-1/2}\varepsilon^{-2}.
\end{equation*}
Combining the two estimates gives
\begin{equation*}
\sup_{x\in\mR^m}
\left|
\mathbb P(Y_t^N\le x)-\mathbb P(Y_t\le x)
\right|
\le
C_3
\left(
\varepsilon
+
m^2N^{-1/2}\varepsilon^{-2}
\right).
\end{equation*}
Finally, taking $\eps=N^{-1/6}$ yields \eqref{Berry-Esseen-m}.
\end{proof}

\vspace{2mm}

\section*{\bf Acknowledgements}
The authors would like to thank Zhenfu Wang for his quite useful discussions on sub-Gaussian
concentration estimates, their $L^p$ extensions, and the corresponding
U-statistic estimates.
Zimo Hao thanks Lucio Galeati for informing him in Marseille about related
work in progress with Avi Mayorcas and Johanna Weinberger.
This work is supported by the NNSF of China (No. 12595282) and
the National Key R\&D Program of China (No. 2023YFA1010103).

\newcommand{\etalchar}[1]{$^{#1}$}

\end{document}